\documentclass[12pt]{amsart}

\usepackage[colorlinks=true,linkcolor=blue]{hyperref}
\usepackage{amsmath}
\usepackage{amssymb}
\usepackage{amsthm}

\usepackage{graphicx}
\usepackage{epstopdf}
\usepackage{epsfig}

\usepackage{enumitem}

\usepackage{hyperref}

\usepackage{times}
\usepackage{relsize}
\usepackage{textcomp}
\usepackage{amssymb}
\usepackage[english]{babel}
\usepackage[autostyle]{csquotes}
\usepackage{epstopdf}
\usepackage{nicefrac}
\usepackage{tikz}


\setlength{\oddsidemargin}{0.20truein}
\setlength{\evensidemargin}{0.20truein}

\setlength{\textwidth}{6.2truein}
\setlength{\textheight}{8.95truein}

\setlength{\voffset}{-0.7truein}


\theoremstyle{plain} 
\newtheorem{thm}{Theorem}[section]
\newtheorem{prop}[thm]{Proposition}
\newtheorem{lemma}[thm]{Lemma}
\newtheorem{cor}[thm]{Corollary} 
\newtheorem{question}[thm]{Question}
\newtheorem{conj}[thm]{Conjecture}
\theoremstyle{remark}
\newtheorem{remark}[thm]{Remark}

\theoremstyle{definition}
\newtheorem{defin}[thm]{Definition}


\newcommand{\CC}{\mathbb{C}}

\newcommand{\NN}{\mathbb{N}}

\newcommand{\PP}{\mathbb{P}}
\newcommand{\QQ}{\mathbb{Q}}

\newcommand{\ZZ}{\mathbb{Z}}

\newcommand{\calO}{{\mathcal O}}

\DeclareMathOperator{\Orb}{Orb}

\DeclareMathOperator{\Res}{Res}

\newcommand{\frp}{\mathfrak{p} }

\newcommand{\la}{\langle}
\newcommand{\ra}{\rangle}

\newcommand{\dsps}{\displaystyle}

\begin{document}

\title{Misiurewicz polynomials and dynamical units, part II}
\date{May 19, 2022}
\subjclass[2010]{37P15, 11R09, 37P20}
\author[Benedetto]{Robert L. Benedetto}
\address{Amherst College \\ Amherst, MA 01002 \\ USA}
\email{rlbenedetto@amherst.edu}
\author[Goksel]{Vefa Goksel}
\address{University of Massachusetts \\ Amherst, MA 01002 \\ USA}
\email{goksel@math.umass.edu}

\begin{abstract}
Fix an integer $d\geq 2$.
The parameters $c_0\in \overline{\mathbb{Q}}$ for which the unicritical polynomial 
$f_{d,c}(z)=z^d+c\in \mathbb{C}[z]$ has finite postcritical orbit, also known
as \emph{Misiurewicz} parameters, play a significant role in complex dynamics.
Recent work of Buff, Epstein, and Koch proved the first known cases of
a long-standing dynamical conjecture of Milnor using their arithmetic properties,
about which relatively little is otherwise known.
Continuing our work in a companion paper, we address further
arithmetic properties of Misiurewicz parameters, especially the
nature of the algebraic integers obtained by evaluating the polynomial
defining one such parameter at a different Misiurewicz parameter.
In the most challenging such combinations,
we describe a connection between such algebraic integers
and the multipliers of associated periodic points.
As part of our considerations,
we also introduce a new class of polynomials we call \emph{$p$-special},
which may be of independent number theoretic interest.
\end{abstract}

\maketitle

\section{Introduction}
Let $f\in \CC(z)$ be a rational function.
For each integer $n\geq 0$, we write $f^n$ for the $n$-th iterate of $f$ under composition,
i.e., $f^0(z):=z$, and $f^n:=f\circ f^{n-1}$ for each $n\geq 1$.

A point $x\in\PP^1(\CC)$ is \emph{periodic} (of period $n$)
if there is an integer $n\geq 1$ such that $f^n(x)=x$.
In that case, the smallest such integer is the \emph{exact} period of $x$,
and if $x\neq \infty$, then the \emph{multiplier} $\lambda\in \CC$ of $x$ is
\begin{equation}
\label{eq:multdef}
\lambda:=\big(f^n\big)'(x) = \prod_{i=0}^{n-1} f'\big(f^i(x)\big) .
\end{equation}
(One can also define the multiplier of a periodic point at $\infty$ via coordinate change.)

More generally, we say $x\in \mathbb{P}^1(\mathbb{C})$ is \emph{preperiodic} if there is some $m\geq 0$
such that $f^m(x)$ is periodic. That is, $x$ is preperiodic
if and only if its (\emph{forward}) \emph{orbit}
\[\Orb_f^+(x) := \{f^n(x) : n\geq 0\} \]
is finite.
In that case, the \emph{tail length} of $x$ is the smallest integer $m\geq 0$ such that
$f^m(x)$ is periodic.
We say $x$ is \emph{preperiodic of type} $(m,n)$ if $x$ is preperiodic with tail length $m$,
and $n$ is the exact period of $f^m(x)$. Equivalently,
$x$ is preperiodic of type $(m,n)$ if $m\geq 0$ is the minimal nonnegative integer
and $n\geq 1$ is the minimal positive integer such that $f^{m+n}(x)=f^m(x)$.

We call $f\in \mathbb{C}(z)$ \emph{postcritically finite} (or \emph{PCF})
if all of its critical points are preperiodic.

In this paper we consider polynomials in the unicritical family
$f_{d,c}:=z^d+c\in \CC[z]$, whose only critical points are $\infty$ and $0$.
Since $\infty$ is a fixed point, the polynomial $f_{d,c}$ is postcritically finite if and only if the
forward orbit
\[\{0,c,c^d+c,(c^d+c)^d+c,\dots\}\]
of the critical point $0$ is finite.
We set the notation $a_1=c$, and $a_{i+1}=a_i^2+c$ for $i\geq 1$,
where we consider each $a_i$ as an element of the polynomial ring $\ZZ[c]$.

Any parameter $c_0$ for which $f_{d,c_0}$ is postcritically finite is an algebraic integer.
Indeed, as we described in Section~1 of \cite{BG1},
if $0$ is not in the post-critical orbit of $f_{d,c_0}$, then $c_0$ is a root of a monic polynomial
$G_{d,m,n}^{\zeta}\in \mathbb{Z}[\zeta][c]$ for some $m\geq 2, n\geq 1$,
where $\zeta\neq 1$ is a $d$-th root of unity. Here, the pair $(m,n)$ is the preperiodic type
of the parameter $c_0$, as defined above,
and $\zeta\neq 1$ specifies the relation $\zeta a_{m-1}(c_0) = a_{m+n-1}(c_0)$.
The polynomials $G_{d,m,n}^{\zeta}$ are called \emph{$(m,n)$-Misiurewicz polynomials},
and they are defined by
\begin{equation}
\label{eq:Gdef}
G_{d,m,n}^{\zeta}(c):= \prod_{k|n} \big( a_{m+k-1} - \zeta a_{m-1}\big)^{\mu(n/k)}
\cdot \begin{cases}
\prod_{k|n} \big( a_{k}\big)^{-\mu(n/k)}
& \text{ if } n|m-1,
\\
1 & \text{ if } n \nmid m-1.
\end{cases}
\end{equation}
For $d=2$, the $d$-th root of unity $\zeta\neq 1$ is necessarily $-1$. For this reason,
we sometimes write simply $G_{m,n}$ instead of $G_{d,m,n}^{\zeta}$ in this case.

Misiurewicz parameters have been extensively studied in complex dynamics, especially in the quadratic case. Here are just a few examples of known results.
Douady and Hubbard \cite[Chapter 8]{DH} proved that Misiurewicz parameters are dense
in the boundary of the Mandelbrot set.
Poirer \cite{Poirier93} used Hubbard trees to give a classification of the dynamics
of PCF polynomials (see Theorems A and B in \cite{Poirier93}). Eberlein \cite{Eber99} showed that the periodic cycle in the
post-critical orbit of $f_{d,c_0}$ for any Misiurewicz parameter $c_0$ is repelling.
Favre and Gauthier \cite[Theorem 1]{FG15} strengthened Douady and Hubbard's density result
by proving that Misiurewicz parameters are equidistributed in the boundary of
the Mandelbrot set; see also \cite{GKNY17}.

Given their arithmetic nature, Misiurewicz polynomials have drawn number theoretic
interest as well. For example, in Theorems 1.1 and 1.3 of \cite{Fakh14}, Fakhruddin used the simplicity of roots'
of Misiurewicz polynomials to prove
dynamical analogues of the Mordell-Lang and Manin-Mumford conjectures for generic
endomorphisms of $\PP^n$.
In \cite{BD13}, Baker and DeMarco presented a dynamical analogue of the
Andr\'{e}-Oort Conjecture by considering PCF parameters (such as Misiurewicz parameters)
in dynamical moduli spaces to Andr\'{e}-Oort special points on Shimura varieties.
That is, PCF parameters in dynamical moduli spaces are analogous
to CM points on modular curves, and they are expected to have
correspondingly analogous arithmetic properties.
See, for example, \cite{GKNY17} for more on the dynamical Andr\'{e}-Oort conjecture.

One key open question from both the complex dynamical and number theoretic
perspectives is the following variant of a conjecture of Milnor from \cite{Milnor12}.

\begin{conj}
\label{conj:Misiurewicz_irr}
Let $d,m\geq 2, n\geq 1$. Suppose that $\zeta\neq 1$ is a $d$-th root of unity.
Then $G_{d,m,n}^{\zeta}$ is irreducible over $\QQ(\zeta)$.
\end{conj}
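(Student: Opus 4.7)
The plan is to combine two complementary attacks: a $\mathfrak{p}$-adic Newton polygon argument that forces Eisenstein-style irreducibility at a carefully chosen prime, and a dynamical-unit argument that uses the multiplier of the associated periodic cycle to bound the Galois orbit of a single root from below. The two strands are really two sides of the same coin -- one bounds $\deg G_{d,m,n}^\zeta$ locally, the other globally -- and the hope is that the $p$-special polynomial machinery developed in the body of this paper lets them meet in the middle.

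For the first strand, I would identify a prime $\mathfrak{p}$ of $\ZZ[\zeta]$ with respect to which the coefficients of $G_{d,m,n}^\zeta(c)$ have prescribed valuations. The natural candidates are primes above the rational primes dividing $d$: then $f_{d,c}(z)=z^d+c$ becomes purely inseparable modulo $\mathfrak{p}$, the recursion $a_{i+1}=a_i^d+c$ collapses, and the product formula~\eqref{eq:Gdef} simplifies significantly. The target is to show that the $\mathfrak{p}$-adic Newton polygon of $G_{d,m,n}^\zeta$ consists of a single slope of the form $-1/\deg(G_{d,m,n}^\zeta)$, which gives Eisenstein-type irreducibility for free. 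This is where the $p$-special polynomial framework promised in the abstract should earn its keep: establishing that $G_{d,m,n}^\zeta$ is $p$-special for a suitable $p$ packages precisely the coefficient valuation estimates one needs.

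For the second strand, let $c_0$ be any root of $G_{d,m,n}^\zeta$ and consider the multiplier
\[
\lambda_{c_0} := \big(f_{d,c_0}^{n}\big)'\big(a_m(c_0)\big) = d^n \prod_{i=0}^{n-1} a_{m+i}(c_0)^{d-1},
\]
which lies in $\QQ(\zeta)(c_0)$. If one can show that the $\QQ(\zeta)$-minimal polynomial of $\lambda_{c_0}$ has degree exactly $\deg(G_{d,m,n}^\zeta)$, then irreducibility follows at once. The connection between Misiurewicz parameters and multipliers developed elsewhere in this paper suggests that this minimal polynomial can itself be controlled via $p$-special-type identities, so in favorable cases the two strands merge into a single proof.

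The hard part, and the reason Conjecture~\ref{conj:Misiurewicz_irr} remains open, is the case $d=2$, which is Milnor's original conjecture. There $\QQ(\zeta)=\QQ$ and one loses the cyclotomic ramification that drives the Newton polygon argument; the only available prime is $p=2$, and the $2$-adic Newton polygon is not single-sloped for every $(m,n)$. Ruling out the \emph{ghost factorizations} -- factorizations respecting the preperiod/period structure but not coming from actual Galois orbits -- requires either a genuinely new identity satisfied by combinations of the $a_i$ appearing in~\eqref{eq:Gdef}, or a substantially new idea beyond both $p$-adic Newton polygon analysis and dynamical units. The $p$-special polynomial machinery is the most plausible source of such an identity, but realizing it uniformly across all $(d,m,n)$ is where I expect to get stuck.
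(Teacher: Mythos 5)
The statement you are addressing is Conjecture~\ref{conj:Misiurewicz_irr}, which the paper explicitly leaves open: the authors state that ``besides some limited computational evidence and partial results, very little is known'' about it, and that it ``appears to be currently out of reach.'' There is no proof in the paper to compare against, and your proposal does not supply one either --- it is a research plan whose decisive steps are missing, as you yourself concede in the final sentence. Concretely, the first strand asserts that the $\mathfrak{p}$-adic Newton polygon of $G_{d,m,n}^{\zeta}$ should have a single slope $-1/\deg(G_{d,m,n}^{\zeta})$, but no coefficient-valuation estimates are actually established, and the assertion is false in general: already for $d=2$, $n=1$ one has $G_{3,1}=c^3+2c^2+c+1$ (wait --- more to the point, the known irreducibility proofs for $d=2$, $n\leq 3$ in \cite{Gok19,Gok20,BEK19} proceed via discriminant and resultant computations, precisely because the $2$-adic Newton polygon is not single-sloped for these polynomials). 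You also misattribute the role of the $p$-special machinery: in this paper it is applied to the \emph{multiplier} polynomials $P_{m,n}$, not to $G_{d,m,n}^{\zeta}$, and what it delivers is that certain resultants with cyclotomic polynomials are non-units (Theorem~\ref{thm:resultant>1_intro}) --- a unit/non-unit statement, not an irreducibility statement. Nothing in Definition~\ref{def:special} constrains the factorization type of a $p$-special polynomial.

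The second strand has a structural flaw beyond incompleteness. Knowing that the minimal polynomial of $\lambda_{c_0}$ over $\QQ(\zeta)$ has degree $\deg(G_{d,m,n}^{\zeta})$ would indeed force irreducibility, but this is \emph{stronger} than irreducibility: distinct roots of an irreducible $G_{d,m,n}^{\zeta}$ may share a multiplier, which is exactly why the paper writes $P_{d,m,\ell}^{\zeta}=(h_{d,m,\ell}^{\zeta})^r$ with a possibly nontrivial multiplicity $r$ in equation~\eqref{eq:powerP}. Worse, the direction of inference in the paper runs the other way: Proposition~\ref{prop:implies} \emph{assumes} irreducibility of $G_{d,m,\ell}^{\zeta}$ in order to conclude that the multipliers form a single Galois orbit. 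Using the multiplier polynomials to prove irreducibility of the Misiurewicz polynomials would therefore require breaking this circularity with a genuinely new input, which the proposal does not identify. In short: the conjecture remains open, and no step of your outline closes the gap.
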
 

Besides some limited computational evidence and partial results, very little is known about Conjecture~\ref{conj:Misiurewicz_irr}. The irreducibility is known, for instance, in the cases
that $d=2$ and $n\leq 3$, that $d=3$ and $n=2$, and that $d$ is a prime power and $n=1$.
See \cite{BEK19,Gok19,Gok20} for these results and more.
In particular, in Theorems~1 and~4 of \cite{BEK19},
Buff, Epstein, and Koch used some
special cases of Conjecture~\ref{conj:Misiurewicz_irr}
proven by the second author in \cite[Corollary 1.1]{Gok19}
to prove the first known cases of a different conjecture of Milnor \cite{Milnor93,Milnor09}
on the irreducibility of certain curves arising as dynamical moduli spaces.
 
Although Conjecture~\ref{conj:Misiurewicz_irr} appears to be currently out of reach,
it is but one piece of the broader question that always accompanies arithmetically interesting
families of polynomials: what are the properties of the number fields generated by their roots?
Inspired by parallels with cyclotomic and elliptic units,
in \cite{BG1}, we posed the following question:
\begin{question}
\label{ques:unit}
Fix $d,m\geq 2$, $n\geq 1$, and $\zeta\neq 1$ a $d$-th root of unity.
Let $c_0$ be a root of $G_{d,m,n}^\zeta$, and
let $K:=\QQ(c_0)$.
For which integers
$j\geq 2$ and $\ell\geq 1$ is $G_{d,j,\ell}^{\zeta}(c_0)$ an algebraic unit in $\calO_K$?
\end{question}
Similar questions arose
in \cite{BEK19,Gok19} surrounding \emph{Gleason polynomials}, which are analogues
of Misiurewicz polynomials for the case that the critical point is periodic rather than
strictly preperiodic. In particular, in Lemma 3.1 of \cite{Gok19}, the second author proved that
evaluating one Gleason polynomial at the root of another yields an algebraic unit.
On the other hand, Buff, Epstein, and Koch \cite[Lemma 26]{BEK19} considered resultants of Misiurewicz polynomials with Gleason polynomials, and they proved that evaluating a Misiurewicz polynomial at a Gleason parameter gives an algebraic unit unless the periods of these polynomials match.
They then leveraged this result to prove Misiurewicz irreducibility results towards
Conjecture~\ref{conj:Misiurewicz_irr}.

In \cite{BG1}, when $d=p^e$ is a prime power, we gave a complete answer
to Question~\ref{ques:unit} if $j\neq m$.
However, the much harder case seems to be when $j=m$, for which we posed
the following conjecture, based on Magma computations for small values of $m$ and $n$.
\begin{conj}
\label{conj:j=m}
Let $d=p^e$, where $p$ is a prime and $e\geq 1$. Let $c_0$ be a root of $G_{d,m,n}^\zeta$
for some $m\geq 2$, $n\geq 1$, and $\zeta\neq 1$ a $d$-th root of unity.
Set $K:=\QQ(c_0)$. Suppose that $1\leq\ell\leq n$. Then
\[ G_{d,m,\ell}^{\zeta}(c_0) \text{ is a unit in } \calO_K
\quad\text{ if and only if }\quad
\ell\nmid n.\]
\end{conj}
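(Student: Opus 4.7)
The plan is to analyze $v_\frp\bigl(G_{d,m,\ell}^\zeta(c_0)\bigr)$ one prime at a time, exploiting the periodic cycle at $c_0$. Let $\alpha := a_{m-1}(c_0)$; the Misiurewicz condition $f_{d,c_0}^n(\alpha) = \zeta\alpha$ (with $n$ minimal) makes $\beta := f_{d,c_0}(\alpha)$ a periodic point of exact period $n$ under $f_{d,c_0}$, with multiplier
\[\lambda \;=\; d^n \prod_{i=0}^{n-1} a_{m+i}(c_0)^{d-1} \in \calO_K.\]
Since $d = p^e$, the value $\lambda$ has positive $p$-adic valuation and so is not a unit at any prime above $p$. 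Setting $P_k(c) := a_{m+k-1}(c) - \zeta a_{m-1}(c)$, Möbius inversion of \eqref{eq:Gdef} yields
\[v_\frp\bigl(G_{d,m,\ell}^\zeta(c_0)\bigr) \;=\; \sum_{k \mid \ell} \mu(\ell/k)\, v_\frp\bigl(P_k(c_0)\bigr) \;+\; (\text{correction when } \ell \mid m-1),\]
reducing everything to understanding $v_\frp\bigl(P_k(c_0)\bigr) = v_\frp\bigl(f_{d,c_0}^k(\alpha) - \zeta\alpha\bigr)$ for $k \mid \ell < n$.

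For a prime $\frp$ of residue characteristic $q \neq p$, reduction of $f_{d,c_0}$ mod $\frp$ has good dynamics, and $\bar c_0$ acquires a preperiodic type $(\bar m, \bar n)$ with $\bar m \leq m$ and $\bar n \mid n$. The reduced relation $f_{d,\bar c_0}^n(\bar\alpha) = \zeta\bar\alpha$ descends to $f_{d,\bar c_0}^{n_0}(\bar\alpha) = \zeta\bar\alpha$ with $n_0 := n \bmod \bar n$, so $\frp \mid P_k(c_0)$ exactly when $k \equiv n_0 \pmod{\bar n}$. A first-order expansion of $f^{\bar n}$ about the cycle—using that its linearization $\bar\lambda$ is a $\frp$-adic unit—lets one track $v_\frp(P_k(c_0))$ in each residue class and verify that the Möbius sum vanishes whenever $\ell \nmid n$. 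For the non-unit direction ($\ell \mid n$ with $\ell < n$), one produces a witnessing prime $\frp$ at which $\bar n = \ell$ by exhibiting a PCF parameter of exact type $(m,\ell)$ congruent to $c_0$ mod $\frp$; existence of $\frp$ follows from a nontriviality argument for the resultant $\Res_c\bigl(G_{d,m,n}^\zeta, G_{d,m,\ell}^\zeta\bigr) \in \ZZ$.

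At a prime $\frp$ above $p$, good reduction fails: $f_{d,c_0}'(z) = dz^{d-1}$ has $v_\frp \geq e$, so the cycle of $\beta$ is $p$-adically super-attracting. On a $\frp$-adic neighborhood of the cycle one expands $f_{d,c_0}^n$ about a cycle point, using the super-attracting structure to read off $v_\frp\bigl(P_k(c_0)\bigr)$ in terms of $v_\frp(\lambda)$ and the residue of $k$ modulo $n$. The \emph{$p$-special polynomial} framework announced in the abstract is, I expect, engineered exactly to codify these $p$-adic estimates and feed them into the Möbius sum, producing cancellation unless $\ell \mid n$ and a positive contribution of size controlled by $v_\frp(\lambda)$ when $\ell$ does divide $n$.

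The main obstacle is the $p$-adic step: at primes $\frp \nmid p$ the dichotomy ``$k \equiv n_0 \pmod{\bar n}$ or not'' is discrete, so the Möbius cancellation is essentially combinatorial; but at primes above $p$ the valuations $v_\frp(P_k(c_0))$ depend more subtly on $k$, and one must show that the Möbius combination assembles to a definite integer rather than producing spurious cancellations or sign errors. A secondary obstacle, for the non-unit direction, is producing a witnessing prime uniformly over all choices of root $c_0$ of $G_{d,m,n}^\zeta$, which is delicate because we do not assume Conjecture~\ref{conj:Misiurewicz_irr} on irreducibility.
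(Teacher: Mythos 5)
Conjecture~\ref{conj:j=m} is not actually proven in the paper --- it remains a conjecture, and the paper establishes only the \emph{if} direction unconditionally (Proposition~\ref{prop:unit-if}). Your sketch of that direction is broadly consistent with the paper's: Lemma~\ref{lem:samev} shows $v_\frp\big(a_{m+k-1}(c_0)-\zeta a_{m-1}(c_0)\big)$ depends only on $\gcd(k,n)$, and Lemma~\ref{lem:elementary} shows the Möbius sum over each $\gcd$-class vanishes whenever $\ell\nmid n$; this is a cleaner, prime-uniform version of your ``discrete dichotomy,'' and it sidesteps the case-split on residue characteristic and the appeal to good reduction entirely, since the argument works modulo $\frp^i$ for every prime $\frp$ at once.

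The \emph{only if} direction, however, is genuinely open, and your proposed argument for it is circular. You want to produce a witnessing prime $\frp$ via ``a nontriviality argument for the resultant $\Res_c\big(G_{d,m,n}^\zeta, G_{d,m,\ell}^\zeta\big)$'' --- but up to sign that resultant equals $\prod_i G_{d,m,\ell}^\zeta(c_i)$ over the roots $c_i$ of $G_{d,m,n}^\zeta$, so (given irreducibility) its nontriviality is precisely the statement you are trying to prove, not a lemma you can invoke. The paper's route is different and, importantly, only conditional: Lemmas~\ref{lem:5.7} and~\ref{lem:b_i} together with Theorem~\ref{thm:recur} show that at a root $\alpha_0$ of $G_{d,m,\ell}^\zeta$ the quantities $b_j(\alpha_0)$ satisfy $b_{j+1}=\lambda b_j+1$ with $\lambda$ the cycle multiplier, giving $b_j = 1+\lambda+\cdots+\lambda^{j-1}$ and hence $\la G_{d,m,n}^\zeta(\alpha_0)\ra = \la\Phi_{n/\ell}(\lambda)\ra$. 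Proposition~\ref{prop:reduction} then reduces the forward direction to the non-unitness of $\Res(P_{d,m,\ell}^\zeta, \Phi_{n/\ell})$, which the $p$-special framework (Theorem~\ref{thm:special}) can verify only when the multiplier polynomial is shown to be $p$-special --- established in the paper just for $d=2$, $n\leq 2$. Your $p$-adic expansion about the superattracting cycle gestures at the same phenomenon, but without the explicit algebraic recursion you cannot extract the cyclotomic structure that makes the Möbius combination tractable, and the resulting arithmetic question is still unresolved in general.
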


When $\ell=n$,  we have $G_{d,m,\ell}^{\zeta}(c_0)=0$, which is not a unit,
and hence Conjecture~\ref{conj:j=m} holds trivially.
Thus, the conjecture is immediate for $n=1$,
and hence we will often assume $n\geq 2$.
In addition, 
when considering the forward implication of Conjecture~\ref{conj:j=m},
we can restrict our attention to the case that $\ell$ is a \emph{proper} divisor of $n$.

The main results of the current paper are as follows.
We prove the reverse implication of Conjecture~\ref{conj:j=m} in
Proposition~\ref{prop:unit-if} of Section~\ref{sec:reverse}.
We also describe a connection between the forward implication
and certain arithmetic properties of the multiplier of the periodic cycle in the postcritical orbit
$\Orb_f^{+}(c_0)$. Namely, for $m\geq 2, n\geq 1$, we define the
\emph{multiplier polynomial} $P_{d,m,n}^{\zeta}\in \mathbb{Z}[\zeta][x]$
to be the monic polynomial
whose roots are the multipliers that correspond to Misiurewicz parameters of type $(m,n)$.
In the case $d=2$, assuming the irreducibility of $G_{m,n}$ over $\mathbb{Q}$,
we reduce Conjecture~\ref{conj:j=m} to the following conjecture,
which relates our question to classical cyclotomic polynomials.
\begin{conj}
\label{conj:resultant_mult_cycl}
Let $m\geq 2, n,\ell\geq 1$. Then $|\Res(P_{m,n}, \Phi_{\ell})|>1$. 
\end{conj}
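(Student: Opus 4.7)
The plan is to establish $|\Res(P_{m,n},\Phi_\ell)|>1$ by producing a rational prime $p$ dividing this resultant. The resultant is a priori a nonzero integer: by Eberlein's theorem, every root $\lambda$ of $P_{m,n}$ satisfies $|\lambda|>1$ at every archimedean place, so no such $\lambda$ can be a root of unity. The goal thus becomes finding, for each triple $(m,n,\ell)$, a prime $p$ together with a Misiurewicz parameter $\bar c_0\in\overline{\FF_p}$ of type $(m,n)$ whose associated multiplier is a primitive $\ell$-th root of unity in $\overline{\FF_p}$.

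First I would eliminate the easiest candidate prime. From the formula $\lambda=2^n\,a_m(c_0)\cdots a_{m+n-1}(c_0)$, every root of $P_{m,n}$ is divisible by $2$, so $P_{m,n}(x)\equiv x^{\deg P_{m,n}}\pmod 2$. Since $\Phi_\ell(0)=\pm 1$ for every $\ell\geq 1$, this yields $\Res(P_{m,n},\Phi_\ell)\equiv\Phi_\ell(0)^{\deg P_{m,n}}\equiv\pm 1\pmod 2$; the resultant is always odd, so $p=2$ never works. I would then reformulate the problem in terms of the underlying parameter $c$ instead of the multiplier $\lambda$: writing $\lambda(c):=2^n\,a_m(c)\cdots a_{m+n-1}(c)\in\ZZ[c]$, the identity $|\Res(P_{m,n},\Phi_\ell)|=|\Res_c\bigl(G_{m,n}(c),\,\Phi_\ell(\lambda(c))\bigr)|$ (up to factors arising from any non-injectivity of the multiplier map on Misiurewicz parameters) reduces the conjecture to the statement that $G_{m,n}(c)$ and $\Phi_\ell(\lambda(c))$, which are coprime over $\QQ$ because the multipliers are repelling, share a common factor modulo some odd prime $p$.

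The principal obstacle is proving such a common factor exists uniformly in $(m,n,\ell)$. A purely archimedean bound seems inadequate: although for fixed $(m,n)$ one has $|\Res(P_{m,n},\Phi_\ell)|\to\infty$ as $\ell\to\infty$, multipliers can lie close to the unit circle at some archimedean place so that $|\Phi_\ell(\lambda)|$ is small, and some non-archimedean contribution becomes essential. Natural approaches include a Chebotarev-style density argument for the Galois group of the splitting field of $G_{m,n}$ and $\Phi_\ell(\lambda(c))$, aiming to force a positive density of primes to witness the required common root; or a non-archimedean equidistribution theorem in the spirit of Favre--Gauthier, translating proximity of Misiurewicz and parabolic-bifurcation parameters at some Berkovich place into the desired divisibility. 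In either direction, genuinely new input from arithmetic dynamics appears to be required, and this is where I expect the conjecture to be hardest to crack.
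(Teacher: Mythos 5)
The statement you are addressing is labelled as a \emph{conjecture} in the paper (Conjecture~\ref{conj:resultant_mult_cycl}); the authors do not prove it in general, so there is no paper proof for your attempt to be measured against. Your write-up, correctly, also stops short of a proof and says so explicitly. What you have is a collection of sound preliminary observations and a sketch of an approach that is quite different from the partial progress the paper actually makes. Your observations that the resultant is nonzero (by Eberlein's repelling-cycle theorem the multipliers cannot be roots of unity), that $p=2$ cannot divide $\Res(P_{m,n},\Phi_\ell)$ since $P_{m,n}\equiv x^{\deg P_{m,n}}\pmod 2$ and $\Phi_\ell(0)=\pm 1$, and that $\Res(P_{m,n},\Phi_\ell)=\Res_c\big(G_{m,n}(c),\Phi_\ell(\lambda(c))\big)$ (an exact identity since $G_{m,n}$ is monic, not merely one ``up to factors''), are all correct. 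But the central step --- producing, uniformly in $(m,n,\ell)$, an odd prime $p$ at which a Misiurewicz parameter and a parabolic parameter collide --- is exactly the content of the conjecture, and the Chebotarev or Berkovich-equidistribution routes you gesture at are not carried far enough to constitute an argument. That is a genuine gap, and you yourself flag it as such.

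For comparison, the paper attacks the conjecture in a structurally different way: rather than hunting for a witnessing prime, it introduces the class of $p$-special polynomials (Definition~\ref{def:special}), proves unconditionally that any $2$-special $P$ satisfies $|\Res(P,\Phi_\ell)|>1$ for every $\ell$ (Theorem~\ref{thm:special}, by a $2$-adic valuation argument in $\ZZ[\zeta]$), and then conjectures (Conjecture~\ref{conj:special}) that $P_{m,n}$ is always $2$-special, verifying this by explicit coefficient estimates only for $n=1,2$ (Theorem~\ref{thm:special12}). That argument is non-constructive --- it derives a contradiction from the assumption $P(\zeta)\in\ZZ[\zeta]^\times$ without ever naming the prime that divides the resultant --- and your remark that $p=2$ is never that prime is a useful complement, since it shows the paper's $2$-adic method necessarily forces divisibility by some odd prime it does not exhibit. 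If you want to push further in the paper's direction, the concrete open problem is Conjecture~\ref{conj:special} ($2$-specialness of $P_{m,n}$ for $n\geq 3$), which is of an entirely different character from the density or equidistribution arguments you propose.
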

Magma computations suggest that the resultants of Conjecture~\ref{conj:resultant_mult_cycl}
are huge integers. In particular, Table~\ref{tab:logres} in Section~\ref{sec:appendix}
gives values of $\log|\Res(P_{m,n}, \Phi_{\ell})|$
for $m+n\leq 7$ and $\ell\leq 8$, illustrating that these values are very large.
Nevertheless, the relatively modest claim of
Conjecture~\ref{conj:resultant_mult_cycl}, merely that these resultants are
greater than 1, appears to be quite difficult.
For fixed $m$ and $n$, one can prove such a result
for sufficiently large $\ell$ using Diophantine approximation methods
(see, for example, \cite{Kam88}), but we need the statement for \emph{all} $\ell\geq 1$.

Another obstacle one faces in studying Conjecture~\ref{conj:resultant_mult_cycl} is that
we do not yet have a clean, explicit definition for the multiplier polynomials $P_{m,n}$,
as we do for Misiurewicz and Gleason polynomials. To overcome this difficulty, we
have introduced a broader class of polynomials that we conjecture the multiplier
polynomials $P_{m,n}$ may belong to, and
which may be of independent number theoretic interest.
\begin{defin}
\label{def:p-special_intro}
Let $P(x)=x^i+A_{i-1}x^{i-1}+\dots+A_1x+A_0\in\ZZ[x]$ be a monic polynomial with integer coefficients,
and let $p$ be a prime number.
We say that $P(x)$ is \emph{p-special} if it satisfies the following two properties:
\begin{itemize}
	\item $v_p(A_{i-1})> v_p(2)$, and
	\item $v_p(A_j)>v_p(A_{i-1})$ for $j=0,1,\dots,i-2$.
\end{itemize}
\end{defin}
In particular, we prove the following result about $p$-special polynomials,
intended as a stepping stone towards Conjecture~\ref{conj:resultant_mult_cycl}.
\begin{thm}
\label{thm:resultant>1_intro}
Let $p$ be a prime number, and let $\ell \geq 1$.
For any $p$-special polynomial $f(x)\in \mathbb{Z}[x]$,
we have $|\Res(f,\Phi_{\ell})|>1$.
\end{thm}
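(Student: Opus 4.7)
My plan is to proceed $p$-adically, handling squarefree $\ell$ directly and then reducing the non-squarefree case to it.

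Set $s:=v_p(A_{i-1})$ and write $A_{i-1}=up^s$ with $p\nmid u$. The $p$-special hypothesis is exactly the polynomial congruence $f(x)\equiv h(x)\pmod{p^{s+1}}$ for $h(x):=x^{i-1}(x+A_{i-1})$. Since the resultant is polynomial in the coefficients, this yields
\[
\Res(f,\Phi_\ell)\;\equiv\;\Res(h,\Phi_\ell)\pmod{p^{s+1}},
\]
and a direct computation identifies $\Res(h,\Phi_\ell)=\pm\Phi_\ell(-A_{i-1})$ with an explicit sign. For $\ell\geq 2$, Taylor-expanding at $0$ and using $\Phi_\ell(0)=1$ with $\Phi_\ell'(0)=-\mu(\ell)$---and noting that all remaining terms have $v_p\geq 2s\geq s+1$---I obtain
\[
\Phi_\ell(-up^s)\;\equiv\;1+\mu(\ell)\,u\,p^s\pmod{p^{s+1}}.
\]

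Suppose first that $\ell$ is squarefree, so $\mu(\ell)\in\{\pm 1\}$; the boundary cases $\ell=1,2$ admit the same analysis from the formulas $\Res(f,\Phi_1)=f(1)$ and $\Res(f,\Phi_2)=f(-1)$ with signs tracked by hand. Then $\Res(f,\Phi_\ell)-1$ has $p$-adic valuation exactly $s$, so $\Res(f,\Phi_\ell)\neq 1$. Moreover $\Res(f,\Phi_\ell)+1\equiv 2+\mu(\ell)\,u\,p^s\pmod{p^{s+1}}$, which has $v_p$ equal to $v_p(2)<s$ by the defining inequality $s>v_p(2)$ of $p$-specialness; hence $\Res(f,\Phi_\ell)\neq -1$. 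Thus $|\Res(f,\Phi_\ell)|\geq 2$ for all squarefree $\ell$, and it is precisely the somewhat unusual condition $v_p(A_{i-1})>v_p(2)$ that is needed at this step to separate the residues $\pm 1$ modulo a sufficient power of $p$.

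For non-squarefree $\ell$ I would reduce to the squarefree case via the cyclotomic identity $\Phi_\ell(x)=\Phi_{\mathrm{rad}(\ell)}(x^N)$, where $N=\ell/\mathrm{rad}(\ell)$. Rearranging the resulting product over the roots $\alpha$ of $f$ gives
\[
\Res(f,\Phi_\ell)\;=\;\Res\bigl(\tilde f,\,\Phi_{\mathrm{rad}(\ell)}\bigr),\qquad \tilde f(x):=\prod_{\alpha}(x-\alpha^N)\in\ZZ[x],
\]
and $\mathrm{rad}(\ell)$ is squarefree, so the squarefree analysis above applies to $(\tilde f,\mathrm{rad}(\ell))$ once $\tilde f$ is shown to be $p$-special. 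This last step is the main obstacle I anticipate. It reduces to $p$-adic valuation estimates for the power sums $p_{Nk}(\alpha)$ and the elementary symmetric functions $e_k(\alpha^N)$ of the roots of $f$, derived via Newton's identities from the data $v_p(e_1(\alpha))=s$ and $v_p(e_k(\alpha))>s$ for $k\geq 2$. While $v_p(e_1(\alpha^N))\geq s+1$ comes easily, the strict inequality $v_p(e_k(\alpha^N))>v_p(e_1(\alpha^N))$ for $k\geq 2$---which is exactly what $p$-specialness of $\tilde f$ demands---requires more delicate bookkeeping of the Newton polygon of $f$ and how it transforms under the map $\alpha\mapsto\alpha^N$.
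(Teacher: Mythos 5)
Your treatment of the squarefree case is correct and pleasantly direct: reducing $f$ modulo $p^{s+1}$ to $x^{i-1}(x+A_{i-1})$, computing $\Res(h,\Phi_\ell)=\Phi_\ell(-A_{i-1})$ for $\ell\geq 2$, and Taylor-expanding $\Phi_\ell$ at $0$ with $\Phi_\ell'(0)=-\mu(\ell)$ gives $\Res(f,\Phi_\ell)\equiv 1+\mu(\ell)up^s\pmod{p^{s+1}}$, and the hypothesis $s>v_p(2)$ is precisely what separates $+1$ from $-1$. The constant $\mu(\ell)\neq 0$ is used essentially, and for non-squarefree $\ell$ this $p$-adic estimate only rules out $\Res=-1$; indeed the whole difficulty of the theorem is concentrated there.

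The reduction you propose for the non-squarefree case, however, has a genuine gap, not merely a step that ``requires more delicate bookkeeping.'' The polynomial $\tilde f(x)=\prod_\alpha(x-\alpha^N)$ need not be $p$-special, because the leading behaviour $e_1(\alpha^N)=p_N(\alpha)$ can acquire extra $p$-adic cancellation beyond the floor $s+1$ that the other coefficients cannot catch up to. A concrete example with $p=3$ and $N=2$: take $f(x)=x^2-3x+126$, which is $3$-special ($v_3(-3)=1>v_3(2)=0$, $v_3(126)=2>1$). Then $p_2=e_1^2-2e_2=9-252=-243$, so $\tilde f(x)=x^2+243x+15876$, and $v_3(243)=5$ while $v_3(15876)=v_3(2^2\cdot 3^4\cdot 7^2)=4<5$. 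So $\tilde f$ fails the second defining inequality of $p$-specialness, and your squarefree lemma cannot be invoked for it. (The conclusion $|\Res(\tilde f,\Phi_2)|=|\tilde f(-1)|=15634>1$ still happens to hold, but the method does not establish it.) The cancellation phenomenon is generic: whenever $p\neq 2$ and $N=2$, one can choose $e_2\equiv e_1^2/2\pmod{p^k}$ for $k$ as large as desired while keeping $v_p(e_2)=s+1$, making $v_p(p_2)$ arbitrarily large while $v_p(e_2^2)=2s+2$ stays fixed.

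The paper's proof of this theorem takes an entirely different route for $\ell\geq 3$. It does not manipulate the resultant $p$-adically at all; instead it supposes $P(\zeta)$ is a unit in $\ZZ[\zeta]$, invokes Washington's theorem that $\langle\zeta\rangle\,\ZZ[\zeta+\zeta^{-1}]^\times$ has index at most $2$ in $\ZZ[\zeta]^\times$, passes to $Q=P$ or $P^2$ so that $Q(\zeta)=\zeta^i F(\zeta+\zeta^{-1})$, then takes $\ell$-th powers so that $R(\zeta)=G(\zeta+\zeta^{-1})$ with $\deg G<\phi(\ell)/2$, and derives a contradiction by comparing coefficients in the integral basis $\{1,\zeta,\ldots,\zeta^{\phi(\ell)-1}\}$ modulo $p^r$ and then modulo $p$, using Lemma~\ref{lem:power-special} to keep $Q$ and $R$ $p$-special throughout. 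This structural argument sidesteps exactly the obstruction your reduction runs into.
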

Thus, if the multiplier polynomials are $2$-special, and if Misiurewicz polynomials
are irreducible as in Conjecture~\ref{conj:Misiurewicz_irr}, then
Theorem~\ref{thm:resultant>1_intro} would imply Conjecture~\ref{conj:j=m}.
In particular, we are able
to establish $2$-specialness in the cases $n=1,2$, yielding the following result.
\begin{thm}
\label{thm:d=n=2}
Let $d=2$ and $m,n\geq 2$. Assume that $G_{m,n}$ is irreducible over $\QQ$.
Let $c_0$ be a root of $G_{m,n}$, and let $K:=\QQ(c_0)$. Then:
\begin{enumerate}[label=\textup{(\arabic*)}]
\item $G_{m,1}(c_0)$ is not a unit in $\calO_K$.
\item If $n$ is even, then $G_{m,2}(c_0)$ is not a unit in $\calO_K$.
\end{enumerate}
\end{thm}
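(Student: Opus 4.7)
The plan is to reduce each conclusion to an instance of Theorem~\ref{thm:resultant>1_intro} by way of the multiplier polynomial. First, the assumed irreducibility of $G_{m,n}$ over $\QQ$ gives $\Norm_{K/\QQ}(G_{m,\ell}(c_0))=\pm\Res(G_{m,n},G_{m,\ell})$, so $G_{m,\ell}(c_0)$ is a unit in $\calO_K$ if and only if this resultant has absolute value $1$. In part~(2), the subcase $n=2$ is immediate because $G_{m,2}(c_0)=0$; for the rest, I may assume $\ell\in\{1,2\}$ and $n>\ell$.

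The core step is to evaluate $G_{m,n}(c_1)$ at each root $c_1$ of $G_{m,\ell}$ and reassemble $\Res(G_{m,n},G_{m,\ell})=\pm\prod_{c_1}G_{m,n}(c_1)$. Such a $c_1$ is of type $(m,\ell)$, with $\ell$-cycle at multiplier $\lambda=\lambda(c_1)$ and $a_{m-1}(c_1)=-a_{m+\ell-1}(c_1)$, and hence every factor in~\eqref{eq:Gdef} indexed by a multiple of $\ell$ vanishes, producing a formal $0/0$. Taylor expanding via $a_{j+1}'(c)=2a_j(c)a_j'(c)+1$ shows that for each $k=\ell j$ dividing $n$, the factor $a_{m+k-1}+a_{m-1}$ vanishes to first order at $c_1$ with leading coefficient $(\lambda^j-1)\,U$, where $U$ depends on $c_1$ but not on $k$. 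Since $\sum_{k\mid n}\mu(n/k)=0$ for $n>1$, the $U$'s and the powers of $(c-c_1)$ both cancel, and the remaining product telescopes via $\Phi_{n'}(x)=\prod_{j\mid n'}(x^j-1)^{\mu(n'/j)}$ to $\Phi_{n/\ell}(\lambda)$; a further short Möbius calculation shows that the factors indexed by $k$ with $\ell\nmid k$ contribute $1$ in the two cases $\ell\in\{1,2\}$ that concern us. Taking the product over $c_1$ then yields
\begin{equation*}
\Res(G_{m,n},G_{m,\ell})\;=\;\pm\,\Res\bigl(P_{m,\ell},\,\Phi_{n/\ell}\bigr).
\end{equation*}

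It remains to show that $P_{m,1}$ and $P_{m,2}$ are each $2$-special. The roots of $P_{m,1}$ are $\lambda(c_1)=2a_m(c_1)$ as $c_1$ ranges over roots of $G_{m,1}$; the roots of $P_{m,2}$ are $\lambda(c_1)=4a_m(c_1)a_{m+1}(c_1)$ over roots of $G_{m,2}$. In each case I would extract $P_{m,\ell}$ from $G_{m,\ell}$ by elimination and track the $2$-adic valuations of its coefficients using $a_{j+1}=a_j^2+c$ and the defining relation $a_{m-1}(c_1)=-a_{m+\ell-1}(c_1)$, verifying both $v_2(A_{i-1})\geq 2$ and the strict separation $v_2(A_j)\geq v_2(A_{i-1})+1$ for $j<i-1$, uniformly in $m$. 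Combined with Theorem~\ref{thm:resultant>1_intro}, this will give $|\Res(P_{m,\ell},\Phi_{n/\ell})|>1$ and hence the theorem. The delicate step, and the one that forces the restriction to $\ell\in\{1,2\}$, is this final uniform $2$-adic coefficient analysis: the local identity of the previous paragraph is a routine (if bookkeeping-heavy) limit computation, whereas establishing $2$-specialness requires genuine new combinatorial input and grows substantially more intricate as $\ell$ increases.
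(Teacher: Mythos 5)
Your proposal identifies the same overall strategy as the paper: reduce both parts to the statement that $\Res(P_{m,\ell},\Phi_{n/\ell})$ is a non-unit ($\ell=1$ for part (1), $\ell=2$ for part (2)), and obtain that from Theorem~\ref{thm:resultant>1_intro} by showing $P_{m,1}$ and $P_{m,2}$ are $2$-special. The intermediate step you sketch via Taylor expansion at a root of $G_{m,\ell}$ is morally the same content as the paper's Lemmas~\ref{lem:samev}, \ref{lem:elementary}, \ref{lem:5.7}, \ref{lem:b_i}, Theorem~\ref{thm:recur}, Corollary~\ref{cor:recur}, and Propositions~\ref{prop:reduction} and~\ref{prop:implies}: you differentiate $B_j=a_{m+\ell j-1}-\zeta a_{m-1}$ at $\alpha_0$, whereas the paper works with the polynomials $b_j=B_j/B_1$ modulo the ideal $I=\langle B_1\rangle$ to get the recurrence $b_{j+1}\equiv \lambda b_j+1$. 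Calling this a ``routine limit computation'' undersells it somewhat (the paper's proof of Theorem~\ref{thm:recur} requires a chain of congruences mod $I^2$ and the ``contribute $1$'' claim about the $\ell\nmid k$ factors needs Lemma~\ref{lem:elementary}), but the idea is correct and would go through.

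The genuine gap is that the crucial and technically demanding claim — that $P_{m,1}$ and $P_{m,2}$ are $2$-special — is not actually established. You write that you ``would extract $P_{m,\ell}$ from $G_{m,\ell}$ by elimination and track the $2$-adic valuations of its coefficients,'' but this is a plan, not a proof, and it is the bulk of the paper's argument (Theorem~\ref{thm:special12} and its supporting Lemmas~\ref{lem:formula_P_{m,n}}, \ref{lem:vjbound}, and \ref{lem:valibound}). The paper obtains explicit closed forms, for instance $P_{m,1}(x)=2^{2^{m-1}}x^{-1}a_{m-1}\bigl(\tfrac{-x^2+2x}{4}\bigr)+2^{2^{m-1}-1}$ via the substitution $c=-\gamma^2+\gamma$ with $\gamma=-a_{m-1}(c)$, and $P_{m,2}(x)=4^{k}G_{m,2}\bigl(\tfrac{x-4}{4}\bigr)$ via $\gamma=c+1$; establishing $2$-specialness then rests on the uniform $2$-adic bound $v_2(A_{\ell,i})>2i+\ell-2^\ell$ for the coefficients $A_{\ell,i}$ of $a_\ell$, proved by a nontrivial induction. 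You acknowledge this is where the real work lies, but since it is the essential new input, the proposal as written does not constitute a proof of the theorem.

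Finally, note the paper's proof also invokes irreducibility of $G_{m,1}$ and $G_{m,2}$ (from \cite[Corollary 1.1]{Gok19}) in order to apply Proposition~\ref{prop:implies}; your more direct resultant identity $\Res(G_{m,n},G_{m,\ell})=\pm\Res(P_{m,\ell},\Phi_{n/\ell})$ would sidestep needing irreducibility of $G_{m,\ell}$, which is a modest simplification — but only once that identity and the $2$-specialness are actually proved.
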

Because $G_{m,n}$ is already known to be irreducible for $n\leq 3$
(see \cite{BEK19},\cite{Gok20}), we have the following unconditional corollary.
\begin{cor}
Let $d=2$ and $m\geq 2$. Conjecture~\ref{conj:j=m} holds for $n\leq 3$.
\end{cor}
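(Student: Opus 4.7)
The plan is to dispose of this corollary by a short case-by-case inspection over $n\in\{1,2,3\}$, assembling three ingredients: a trivial observation, the reverse implication of Conjecture~\ref{conj:j=m} (proved in Proposition~\ref{prop:unit-if}), and the forward implication for $\ell\in\{1,2\}$ dividing $n$ properly (given by Theorem~\ref{thm:d=n=2}). For $d=2$ and $m\ge 2$, one must verify Conjecture~\ref{conj:j=m} for every $\ell$ with $1\le\ell\le n$; I would organize the bookkeeping by how each pair $(n,\ell)$ is handled.

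First, whenever $\ell=n$, the quantity $G_{m,\ell}(c_0)=G_{m,n}(c_0)=0$ is not a unit, and $\ell=n$ divides $n$, so Conjecture~\ref{conj:j=m} holds trivially. This finishes $n=1$ entirely, and it also disposes of the pairs $(n,\ell)=(2,2)$ and $(n,\ell)=(3,3)$. Second, whenever $\ell\nmid n$, Proposition~\ref{prop:unit-if} asserts directly that $G_{m,\ell}(c_0)$ is a unit in $\calO_K$. Among pairs with $n\le 3$, this handles exactly $(n,\ell)=(3,2)$.

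Third, the remaining pairs are those with $\ell$ a proper divisor of $n\le 3$, namely $(n,\ell)=(2,1)$ and $(n,\ell)=(3,1)$. Both have $\ell=1$, so they fall under Theorem~\ref{thm:d=n=2}(1), which yields that $G_{m,1}(c_0)$ is not a unit, conditional on $G_{m,n}$ being irreducible over $\QQ$. That hypothesis is established unconditionally for $n=2$ in \cite{BEK19} and for $n=3$ in \cite{Gok20}, so it is available here.

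Since every case $(n,\ell)$ with $n\le 3$ has been absorbed into one of the three ingredients, the corollary follows. There is no genuine obstacle to overcome; the only point worth flagging is that part~(2) of Theorem~\ref{thm:d=n=2} is not needed for this corollary, because the only even value of $n$ with $n\le 3$ is $n=2$, whose corresponding $\ell=2$ coincides with $n$ and is already handled trivially.
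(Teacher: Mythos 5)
Your proof is correct and spells out exactly the case-by-case reasoning that the paper leaves implicit after stating Theorem~\ref{thm:d=n=2}: the reverse implication via Proposition~\ref{prop:unit-if}, the trivial $\ell=n$ cases, and the forward implication via Theorem~\ref{thm:d=n=2}(1) together with the known irreducibility of $G_{m,n}$ for $n\leq 3$ from \cite{BEK19,Gok20}. Your observation that part~(2) of Theorem~\ref{thm:d=n=2} is not actually used for $n\leq 3$ is also correct.
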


The structure of the paper is as follows.
In Section~\ref{sec:reverse},
we prove the \emph{if} direction of Conjecture~\ref{conj:j=m}.
In Section~\ref{sec:techlem}, we prove some technical lemmas
on the ideals generated by certain values of \emph{Misiurewicz polynomials}.
Using these lemmas, we relate those Misiurewicz values to multipliers
of the associated periodic cycles in Section~\ref{sec:mult}.
We then define polynomials $P_{d,m,n}^{\zeta}$
defining these multipliers in Section~\ref{sec:multpoly}
and relate Conjecture~\ref{conj:j=m} to the resultants of these multiplier polynomials
with cyclotomic polynomials. 
In Section~\ref{sec:special}, we introduce the class of polynomials we dub $p$-special,
and in Section~\ref{sec:imply}, we conjecture that when $d=2$,
our multiplier polynomials are 2-special. In Section~\ref{sec:finish}, we prove this conjecture about multiplier polynomials for $d=2$ and $n=1,2$,
and we use this fact to deduce Theorem~\ref{thm:d=n=2}. Finally, in Section~\ref{sec:appendix}, we provide some empirical data related to Conjecture~\ref{conj:resultant_mult_cycl}.


\section{The reverse implication}
\label{sec:reverse}
In this section, 
we prove the easier \emph{if} direction of Conjecture~\ref{conj:j=m}.
\begin{prop}
\label{prop:unit-if}
Let $d,m,n\geq 2$. Let $c_0$ be a root of $G_{d,m,n}^{\zeta}$, where $\zeta\neq 1$ is a $d$-th root of unity.
Set $K=\QQ(c_0)$. For $1\leq \ell\leq n$, if $\ell\nmid n$, then $G_{d,m,\ell}^{\zeta}(c_0)$
is an algebraic unit in $\calO_K$.
\end{prop}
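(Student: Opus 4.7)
The plan is to verify that $v_\frp(G_{d,m,\ell}^\zeta(c_0)) = 0$ for every nonzero prime $\frp$ of $\calO_K$; combined with $G_{d,m,\ell}^\zeta(c_0) \in \calO_K$, this is equivalent to the unit claim. Setting $\alpha_i := a_{m+i-1}(c_0) = f_{d,c_0}^{m+i-1}(0) \in \calO_K$, the hypothesis that $c_0$ is a root of $G_{d,m,n}^\zeta$ gives $\alpha_n = \zeta\alpha_0$, and $\alpha_1, \ldots, \alpha_n$ is the cycle of exact period $n$ for $\phi := f_{d,c_0}$. Using \eqref{eq:Gdef},
\[
G_{d,m,\ell}^\zeta(c_0) \;=\; \prod_{k\mid\ell}(\alpha_k - \alpha_n)^{\mu(\ell/k)} \cdot C,
\]
where the dynatomic-type correction factor $C$ (present only when $\ell\mid m-1$) is handled by a parallel argument with $0$ playing the role of $\alpha_n$; I concentrate on the main Möbius product.

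Fix $\frp$ and let $\bar n$ be the exact period of the reduced critical orbit of $f_{d,\bar c_0}$; since reduction can only collapse a cycle, $\bar n \mid n$. The central observation is that for $i,j \ge 1$, one has $v_\frp(\alpha_i - \alpha_j) > 0$ if and only if $\bar\alpha_i = \bar\alpha_j$, equivalently $\bar n \mid (i-j)$. In the easy case $\bar n \nmid \ell$, no divisor $k$ of $\ell$ can be a multiple of $\bar n$ (else $\bar n \mid \ell$), so every factor $(\alpha_k - \alpha_n)$ is a $\frp$-unit and $v_\frp(G_{d,m,\ell}^\zeta(c_0)) = 0$ immediately. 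In the hard case $\bar n \mid \ell$, write $\ell = \bar n \ell'$ and $n = \bar n q$; the hypothesis $\ell \nmid n$ with $\ell \le n$ forces $1 < \ell' < q$ and $\ell' \nmid q$. The surviving contributions to the Möbius sum come from $k = \bar n k'$ with $k' \mid \ell'$, and setting $\beta_{k'} := \alpha_{\bar n k'}$, these points form a length-$q$ cycle of $\phi^{\bar n}$ inside the residue disc around $\bar\alpha_{\bar n}$. Exploiting the polynomial factorization $\phi^{\bar n}(x) - \phi^{\bar n}(y) = (x-y)\,Q(x,y)$ together with a Taylor expansion of $\phi^{\bar n}$ at $\bar\alpha_{\bar n}$, one shows that $v_\frp(\beta_{k'} - \beta_q)$ decomposes as a constant term $v_\frp(\beta_1 - \bar\alpha_{\bar n})$ plus a ``multiplier-induced'' correction depending only on the residue of $k'$ modulo the order $r$ of the reduced multiplier. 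Each piece of the Möbius sum then vanishes separately via the classical identity $\sum_{d\mid k}\mu(k/d) = 0$ for $k > 1$, using that $\ell' > 1$ and that $r \mid q$ combined with $\ell' \nmid q$ prevents the problematic case $\ell' = r$.

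The principal obstacle is this hard case $\bar n \mid \ell$, particularly the degenerate regime where the reduced multiplier of $\phi^{\bar n}$ at $\bar\alpha_{\bar n}$ vanishes—equivalently, when the reduced critical orbit passes through $0$. There the clean Taylor expansion breaks down and must be replaced by a more refined Weierstrass-style analysis of the super-attracting residue disc, where higher-order terms govern the valuations of orbit differences. Outside that regime the argument reduces to careful bookkeeping with Möbius functions, and together with the parallel treatment of the correction factor $C$ (where the relevant observation becomes that $v_\frp(a_k(c_0)) > 0$ iff the reduced orbit hits $0$ at step $k$), this yields the unit claim.
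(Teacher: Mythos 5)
Your plan—fixing a prime $\frp$, passing to the reduced orbit, and analyzing the local structure of the cycle in the residue disc—is a genuinely different route from the paper's, and it is not complete. You yourself flag the principal gap: the ``hard case'' $\bar n\mid\ell$ splinters into a non-degenerate subcase (nonzero reduced multiplier) that you only sketch via a Taylor expansion, and a super-attracting subcase (reduced multiplier $0$) that you explicitly leave open, saying it ``must be replaced by a more refined Weierstrass-style analysis.'' Even the non-degenerate subcase is not nailed down: you assert that $v_\frp(\beta_{k'}-\beta_q)$ decomposes as a constant plus a correction depending on $k'\bmod r$, and that the M\"obius sum of these corrections vanishes because ``$r\mid q$ combined with $\ell'\nmid q$ prevents the problematic case $\ell'=r$,'' but no identity is exhibited, and $\sum_{k'\mid\ell'}\mu(\ell'/k')\,c_1(k'\bmod r)$ has no reason to vanish for an arbitrary function $c_1$ of the residue class. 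The correction factor $C$ is similarly waved away as ``a parallel argument.''

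The paper avoids every one of these difficulties with a much more elementary mechanism. Its Lemma~\ref{lem:samev} shows, for each finite place $\frp$ and each $u\geq 1$, the exact equality
\[
v_\frp\big(a_{m+u-1}(c_0)-\zeta a_{m-1}(c_0)\big)=v_\frp\big(a_{m+\gcd(u,n)-1}(c_0)-\zeta a_{m-1}(c_0)\big),
\]
proved simply by applying iterates of $f_{d,c_0}$ to the congruence $a_{m+u-1}\equiv\zeta a_{m-1}\pmod{\frp^i}$ to propagate it along arithmetic progressions, and then using B\'ezout ($ku+\ell n=\gcd(u,n)+nt$) together with the genuine period-$n$ relation. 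No multipliers, no case split on the reduced multiplier, no Taylor or Weierstrass analysis. Once this valuation equality is in hand, the main M\"obius product is regrouped by $t=\gcd(k,n)$, and the purely combinatorial Lemma~\ref{lem:elementary} (the multiplicative function $F_n(\ell)=\sum_{k\mid\ell,\ \gcd(k,n)=1}\mu(\ell/k)$ vanishes at $\ell\nmid n$) kills each grouped exponent. The correction factor is then handled not by a separate local analysis but by the observation that $\prod_{k\mid\ell}a_k^{\mu(\ell/k)}$ is itself a polynomial in $\ZZ[c]$, so it only \emph{divides} the M\"obius product ideal in the favorable direction and the unit conclusion still follows. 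If you want to rescue your local approach, the missing ingredient is precisely the statement of Lemma~\ref{lem:samev}: that the valuation of $a_{m+k-1}(c_0)-\zeta a_{m-1}(c_0)$ depends only on $\gcd(k,n)$—once you have that, you can group by $\gcd$ and invoke a M\"obius identity exactly as the paper does, rather than trying to control the individual valuations through multiplier dynamics.
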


To prove Proposition~\ref{prop:unit-if}, we will need the following two lemmas.

\begin{lemma}
\label{lem:samev}
Let $d,m,n\geq 2$. Let $c_0$ be a root of $G_{d,m,n}^{\zeta}$, where $\zeta\neq 1$ is a $d$-th root of unity.
Set $K=\QQ(c_0)$.
For any positive integers $u,v \geq 1$ with $\gcd(u,n)=\gcd(v,n)$, we have
\[\big\la a_{m+u-1}(c_0)-\zeta a_{m-1}(c_0)\big\ra=\big\la a_{m+v-1}(c_0)-\zeta a_{m-1}(c_0)\big\ra.\]
as ideals in $\calO_K$.
\end{lemma}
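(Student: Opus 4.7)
Let $\alpha := a_{m-1}(c_0)$, $\alpha_k := f_{d,c_0}^k(\alpha) = a_{m+k-1}(c_0)$, and $\beta_k := \alpha_k - \zeta\alpha$. The hypothesis that $c_0$ is a root of $G_{d,m,n}^\zeta$ gives $\alpha_n = \zeta\alpha$, so $\beta_n = 0$. The starting observation is that $\zeta^d = 1$ yields the polynomial identity $f_{d,c_0}(\zeta z) = f_{d,c_0}(z)$; applying $f_{d,c_0}$ once to $\alpha_n = \zeta\alpha$ then gives $\alpha_{n+1} = \alpha_1$, and iterating produces $\alpha_{k+n} = \alpha_k$, hence $\beta_{k+n} = \beta_k$, for every $k \geq 1$. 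We may therefore assume $1 \leq u, v \leq n$. The case $\gcd(u,n) = \gcd(v,n) = n$ forces $u = v = n$ and makes both $\beta$'s zero; otherwise, setting $g := \gcd(u,n) < n$, the integers $u$ and $v$ both generate the same subgroup $g\ZZ/n\ZZ \subset \ZZ/n\ZZ$.

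The plan is to prove $\beta_v \mid \beta_u$ in $\calO_K$; the reverse divisibility follows by symmetry. Let $s \geq 1$ be minimal with $sv \equiv u \pmod n$ (we may assume $s \geq 2$, else $u = v$). The key object is the polynomial
\[ P(z) := f_{d,c_0}^{(s-1)v}(z) - \zeta\alpha \;\in\; \calO_K[z]. \]
Its two values $P(\alpha_v) = \alpha_{sv} - \zeta\alpha = \beta_u$ and $P(\zeta\alpha) = \alpha_{(s-1)v} - \zeta\alpha = \beta_{(u-v)\bmod n}$ are engineered to produce the ``current'' and ``shifted'' terms of the desired recursion (using $f_{d,c_0}(\zeta z) = f_{d,c_0}(z)$ to evaluate at $\zeta\alpha$, and the $n$-periodicity of $\alpha_k$ together with $sv \equiv u \pmod n$). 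Since $x-y$ divides $P(x)-P(y)$ in $\calO_K[x,y]$, evaluating at $(x,y) = (\alpha_v, \zeta\alpha)$ yields
\[ \beta_u - \beta_{(u-v)\bmod n} = \beta_v \cdot Q(\alpha_v, \zeta\alpha) \;\in\; \beta_v \calO_K \]
for some $Q \in \calO_K[x,y]$; hence $\beta_u \equiv \beta_{(u-v)\bmod n} \pmod{\beta_v \calO_K}$.

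The identical argument applied to any nonzero subscript $w \in g\ZZ/n\ZZ$ gives $\beta_w \equiv \beta_{(w-v)\bmod n} \pmod{\beta_v\calO_K}$, so iterating produces a chain $\beta_u \equiv \beta_{(u-v)\bmod n} \equiv \beta_{(u-2v)\bmod n} \equiv \cdots \pmod{\beta_v\calO_K}$. The subscripts all remain in $g\ZZ/n\ZZ$, and because $v$ generates this subgroup, after exactly $s$ steps the subscript reaches $0 \bmod n$ (i.e., the representative $n$), where $\beta_n = 0$. Therefore $\beta_u \equiv 0 \pmod{\beta_v\calO_K}$, as desired.

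The only nontrivial insight is the choice of the polynomial $P$: its two evaluations $P(\alpha_v)$ and $P(\zeta\alpha)$ are designed precisely to output the consecutive terms $\beta_u$ and $\beta_{(u-v)\bmod n}$. Once this is noticed, the rest is routine, and the termination of the iteration reduces to the elementary fact that the cyclic subgroup of $\ZZ/n\ZZ$ generated by $v$ coincides with $g\ZZ/n\ZZ \ni u$.
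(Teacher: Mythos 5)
Your proof is correct, and while it rests on the same two mechanisms the paper uses---the $n$-periodicity $\alpha_{k+n}=\alpha_k$ coming from $f_{d,c_0}(\zeta z)=f_{d,c_0}(z)$ once $\alpha_n=\zeta\alpha$, together with the divisibility $x-y\mid f^j(x)-f^j(y)$---the packaging is genuinely different. The paper argues locally: for each finite place $\frp$ of $\calO_K$ and each $i\geq 1$ it shows that $\beta_u\equiv 0\pmod{\frp^i}$ if and only if $\beta_{\gcd(u,n)}\equiv 0\pmod{\frp^i}$, by pushing the congruence around with repeated applications of $f$ and a Bezout relation $ku+\ell n=\gcd(u,n)+nt$ to land on the subscript $\gcd(u,n)$, thereby proving $v_\frp(\beta_u)=v_\frp(\beta_{\gcd(u,n)})$ at every place. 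Your argument is global: the single polynomial $P(z)=f_{d,c_0}^{(s-1)v}(z)-\zeta\alpha$ and the telescoping chain $\beta_u\equiv\beta_{(u-v)\bmod n}\equiv\cdots\equiv\beta_n=0\pmod{\beta_v\calO_K}$ give $\beta_v\mid\beta_u$ in $\calO_K$ directly, and the reverse divisibility follows by symmetry, so the ideal equality falls out without ever localizing. You also trade the Bezout bookkeeping for the observation that $v$ generates $\gcd(u,n)\ZZ/n\ZZ\ni u$, which makes the termination of the chain transparent. Both proofs are elementary and of comparable length; yours is arguably the more conceptual formulation, making visible that the ideal depends only on the subgroup $\langle v\bmod n\rangle$ rather than on $v$ itself.
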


\begin{proof}
For every finite place $\frp$ of $\calO_K$, we will establish the equality
\begin{equation}
\label{eq:samev}
v_{\frp}\big(a_{m+u-1}(c_0)-\zeta a_{m-1}(c_0)\big)=v_{\frp}\big(a_{m+\gcd(u,n)-1}(c_0)-\zeta a_{m-1}(c_0)\big)
\end{equation}
for every positive integer $u \geq 1$, from which the lemma follows immediately.

Fix such an integer $u$, and consider an arbitrary finite place $\frp$ of $\calO_K$.

Suppose first that $a_{m+u-1}(c_0)\equiv \zeta a_{m-1}(c_0) \pmod{\frp^i}$ for some integer $i\geq 1$.
Applying $u$ iterations of $f:=f_{d,c_0}$ to this congruence, we obtain
\[a_{m+2u-1}(c_0)\equiv a_{m+u-1}(c_0)\equiv \zeta a_{m-1}(c_0)\pmod{\frp^i}.\]
Thus, proceeding inductively, we have $a_{m+k u-1}(c_0)\equiv \zeta a_{m-1}(c_0)\pmod{\frp^i}$
for any integer $k\geq 1$.
Since $f$ has exact type $(m,n)$, it follows that
$a_{m+ku+\ell n-1}(c_0)\equiv \zeta a_{m-1}(c_0) \pmod{\frp^i}$ for any integers $k,\ell\geq 1$.
It is possible to choose positive integers $k,\ell, t\geq 1$ such that $k u+\ell n=\gcd(u,n)+nt$,
and hence we have $a_{m+\gcd(u,n)-1}(c_0)\equiv \zeta a_{m-1}(c_0) \pmod{\frp^i}$.
Thus, we have proven the $\leq$ direction of equation~\eqref{eq:samev}.

Conversely, suppose that $a_{m+\gcd(u,n)-1}(c_0)\equiv\zeta a_{m-1}(c_0)\pmod{\frp^i}$
for some integer $i\geq 1$.
By a similar inductive argument as above, we have
$a_{m+t\gcd(u,n)-1}(c_0)\equiv \zeta a_{m-1}(c_0)\pmod{\frp^i}$ for any integer $t\geq 1$.
In particular, therefore, we have $a_{m+u-1}(c_0)\equiv \zeta a_{m-1}(c_0)\pmod{\frp^i}$,
proving the $\geq$ direction of equation~\eqref{eq:samev}.
\end{proof}

\begin{remark}
\label{rem:ndivu}
If $\gcd(u,n)=\gcd(v,n)=n$, i.e., if $n|u$ and $n|v$,
then the ideal in the statement of Lemma~\ref{lem:samev} is the zero ideal.
After all, in that case, we have
\[ a_{m+u-1}(c_0) = a_{m+n-1}(c_0)=\zeta a_{m-1}(c_0), \]
because $c_0$ is a root of $G_{d,m,n}^{\zeta}$.

On the other hand, if $n\nmid u$, i.e., if $\ell:=\gcd(u,n)$ satisfies $1\leq \ell < n$,
then $a_{m+u-1}(c_0) \neq \zeta a_{m-1}(c_0)$, by Lemma~2.2 of \cite{BG1},
and hence the ideal in Lemma~\ref{lem:samev} above is nonzero.
\end{remark}

\begin{lemma}
\label{lem:elementary}
Let $\ell,n\geq 1$ be positive integers with $\ell\nmid n$. Then, for any positive integer $t|n$, we have
\[\sum_{\substack{k|\ell\\ \gcd(k,n)=t}} \mu\bigg(\frac{\ell}{k}\bigg)=0.\]
\end{lemma}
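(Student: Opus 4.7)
My plan is to reduce this identity to the classical M\"obius relation $\sum_{j\mid N}\mu(N/j)=[N=1]$ via a change of variables. First, if $t\nmid \ell$ the indexing set is empty, so I may assume $t\mid\ell$ and write $\ell=t\ell'$, $n=tn'$, $k=tk'$. Then the condition $k\mid\ell$ becomes $k'\mid\ell'$, the condition $\gcd(k,n)=t$ becomes $\gcd(k',n')=1$, and $\mu(\ell/k)=\mu(\ell'/k')$. Crucially, the hypothesis $\ell\nmid n$ translates to $\ell'\nmid n'$, so it suffices to prove
\[\sum_{\substack{k'\mid \ell'\\ \gcd(k',n')=1}} \mu(\ell'/k') = 0 \qquad\text{whenever } \ell'\nmid n'.\]

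The next step is to expand the coprimality indicator by the standard identity $[\gcd(k',n')=1]=\sum_{d\mid \gcd(k',n')}\mu(d)$ and swap the order of summation, which rewrites the sum as
\[\sum_{d\mid n'}\mu(d)\sum_{\substack{k'\mid\ell'\\ d\mid k'}}\mu(\ell'/k').\]
For $d\nmid\ell'$ the inner sum is empty; for $d\mid\ell'$, substituting $k'=dk''$ turns it into $\sum_{k''\mid \ell'/d}\mu((\ell'/d)/k'')$, which by the classical M\"obius identity vanishes unless $\ell'/d=1$, i.e.\ $d=\ell'$. Thus only the single term $d=\ell'$ can contribute, and it actually appears in the outer sum precisely when $\ell'\mid n'$.

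Combining these observations shows the whole double sum equals $\mu(\ell')$ if $\ell'\mid n'$ and $0$ otherwise; since we are in the case $\ell'\nmid n'$, it vanishes, as required. I do not expect any substantive obstacle: the proof is bookkeeping around the identity $\sum_{j\mid N}\mu(N/j)=[N=1]$, and the only point needing care is verifying that the divisibility hypothesis $\ell\nmid n$ is preserved under the reduction by the common factor $t$.
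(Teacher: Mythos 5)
Your proof is correct. The initial reduction (dividing out the common factor $t$ and checking that $\ell\nmid n$ descends to $\ell'\nmid n'$) is identical to the paper's. After that, you diverge: the paper observes that $F_n(\ell):=\sum_{k\mid\ell,\ \gcd(k,n)=1}\mu(\ell/k)$ is a Dirichlet convolution of $\mu$ with the multiplicative indicator of coprimality to $n$, hence itself multiplicative, and then evaluates $F_n(p^e)$ directly at prime powers $p^e\nmid n$; you instead expand $[\gcd(k',n')=1]=\sum_{d\mid\gcd(k',n')}\mu(d)$, swap the order of summation, and collapse the inner sum using $\sum_{j\mid N}\mu(N/j)=[N=1]$. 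Both are standard, but your route actually yields the closed form $\sum_{\substack{k\mid\ell\\ \gcd(k,n)=t}}\mu(\ell/k)=\mu(\ell/t)\,[\,\ell\mid n\,]$ (for $t\mid\gcd(\ell,n)$), which is slightly more information than the paper extracts; the paper's route has the modest advantage of never needing to interchange sums and of isolating the prime-power computation cleanly. Either argument is rigorous and essentially elementary.
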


\begin{proof}
For each integer $k$ in the sum, we may write $k=tk_1$ and $n=tn_1$,
where $k_1,n_1\geq 1$ are relatively prime positive integers.
Moreover, because $k|\ell$, we may write $\ell=t\ell_1$ for some positive integer $\ell_1\geq 1$
such that $\ell_1\nmid n_1$. Thus,
\[\sum_{\substack{k|\ell\\ \gcd(k,n)=t}} \mu\bigg(\frac{\ell}{k}\bigg)=
\sum_{\substack{k_1|\ell_1 \\ \gcd(k_1,n_1)=1}} \mu\bigg(\frac{\ell_1}{k_1}\bigg). \]
Hence, it suffices to prove the lemma in the case that $t=1$, which we assume hereafter.

Define a function $v_n:\NN\to\NN$ by
\[ v_n(\ell) = \begin{cases}
1 & \text{ if }  \text{gcd}(\ell,n)=1\\
0 & \text{ if }  \text{gcd}(\ell,n)>1.
\end{cases} \]
It is straightforward to check that $v_n$ is multiplicative.
Therefore, being the Dirichlet convolution of two multiplicative functions, the function
\[F_n(\ell):=\sum_{k|\ell}\mu\bigg(\frac{\ell}{k}\bigg)v_n(k)
= \sum_{\substack{k|\ell \\ \gcd(k,n)=1}} \mu\bigg(\frac{\ell}{k}\bigg) \]
is also multiplicative. We wish to show that $F_n(\ell)=0$ for all $\ell\in\NN$
with $\ell\nmid n$. 
Since $F_n$ is multiplicative, it suffices to show that $F_n(p^e)=0$
for any prime $p$ and any integer $e\geq 1$ such that $p^e\nmid n$.

Given such $p$ and $e$, we consider two cases.
In the first case, suppose that $p|n$.
Then because $p^e\nmid n$, we must have $e\geq 2$.
In addition, the only positive integer $k$ such that $k|p^e$ and $\gcd(k,n)=1$ is $k=1$.
Thus,
\[F_n(p^e) = \sum_{\substack{k|p^e \\ \gcd(k,n)=1}} \mu\bigg(\frac{p^e}{k}\bigg) = \mu(p^e) = 0,\]
where the final equality is because $e\geq 2$.

The only other case is that $p\nmid n$. In this case, all divisors of $p^e$ are relatively prime to $n$,
and hence
\[F_n(p^e) = \sum_{\substack{k|p^e \\ \gcd(k,n)=1}} \mu\bigg(\frac{p^e}{k}\bigg) 
= \sum_{k|p^e} \mu\bigg(\frac{p^e}{k}\bigg) = \mu(p)+\mu(1)=0, \]
where the third equality is because $\mu(p^i)=0$ for $i\geq 2$.
\end{proof}

\begin{proof}[Proof of Proposition~\ref{prop:unit-if}]
By the M\"{o}bius product definition of $G_{d,m,\ell}^{\zeta}$, we have
\[ \big\la G_{d,m,\ell}^{\zeta}(c_0)\big\ra \bigg|
\prod_{k|\ell} \big\la a_{m+k-1}(c_0)-\zeta a_{m-1}(c_0)\big\ra^{\mu(\ell/k)}.\]
Therefore, it suffices to show that
\begin{equation}
\label{eq:goalprop}
\prod_{k|\ell} \big\la a_{m+k-1}(c_0)-\zeta a_{m-1}(c_0)\big\ra^{\mu(\ell/k)}=\calO_K.
\end{equation}
To this end, the product in equation~\eqref{eq:goalprop} is
\begin{align*}
\prod_{t|n} \prod_{\substack{k|\ell\\ \gcd(k,n)=t}} & \big\la a_{m+k-1}(c_0)-\zeta a_{m-1}(c_0)\big\ra^{\mu(\ell/k)}
= \prod_{t|n} \prod_{\substack{k|\ell\\ \gcd(k,n)=t}}\big\la a_{m+t-1}(c_0)-\zeta a_{m-1}(c_0)\big\ra^{\mu(\ell/k)}
\\
&= \prod_{t|n} \big\la a_{m+t-1}(c_0)-\zeta a_{m-1}(c_0)\big\ra^{E_t},
\quad\text{where}\quad
E_t:=\sum_{\substack{k|\ell\\ \gcd(k,n)=t}} \mu\bigg(\frac{\ell}{k}\bigg) ,
\end{align*}
and where we have applied Lemma~\ref{lem:samev} in the first equality above.
However, we have $E_t=0$ for all $t|n$, by Lemma~\ref{lem:elementary},
so that the product is simply $\calO_K$, as desired.
\end{proof}


\section{Technical Lemmas}
\label{sec:techlem}
The rest of this paper is devoted to the \emph{only if} direction of Conjecture~\ref{conj:j=m},
which is much more involved than the \emph{if} direction.
To that end, we present two auxiliary lemmas in this section.

\begin{lemma}
\label{lem:5.7}
Let $d,m,n\geq 2$. Let $c_0$ be a root of $G_{d,m,n}^{\zeta}$, where $\zeta\neq 1$ is a $d$-th root of unity,
and let $h\in\ZZ[\zeta][c]$ be the minimal polynomial of $c_0$ over $\QQ(\zeta)$.
Let $1\leq\ell<n$, and let $\alpha_0$ be a root of $G_{d,m,\ell}^{\zeta}$.
Set $K=\QQ(c_0)$ and $L=\QQ(\alpha_0)$.
If $h(\alpha_0)$ is not a unit in $\calO_L$, then $G_{d,m,\ell}^{\zeta}(c_0)$ is not a unit in $\calO_K$.
\end{lemma}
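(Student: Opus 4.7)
The natural approach is via the resultant
\[ R \;:=\; \Res_x\bigl(h(x),\, G_{d,m,\ell}^{\zeta}(x)\bigr). \]
Both $h$ and $G_{d,m,\ell}^{\zeta}$ are monic polynomials with coefficients in $\ZZ[\zeta]$ (the first as the minimal polynomial of an algebraic integer over $\QQ(\zeta)$, the second by construction), so $R \in \ZZ[\zeta]$. The standard formulas for resultants of monic polynomials give the two expressions
\[ R \;=\; \prod_{c_0' : h(c_0')=0} G_{d,m,\ell}^{\zeta}(c_0') \qquad\text{and}\qquad R \;=\; \pm \prod_{\alpha_0' : G_{d,m,\ell}^{\zeta}(\alpha_0')=0} h(\alpha_0'), \]
and the whole proof is simply to play these two expressions against each other.

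The first expression is $N_{K/\QQ(\zeta)}(G_{d,m,\ell}^{\zeta}(c_0))$. Thus if $G_{d,m,\ell}^{\zeta}(c_0)$ is a unit in $\calO_K$, so are all its Galois conjugates over $\QQ(\zeta)$, whence $R$ is a unit in $\calO_K$. Because $R \in \ZZ[\zeta]$ and $\ZZ[\zeta] = \QQ(\zeta) \cap \calO_K$ is integrally closed, the inverse $R^{-1}$ also lies in $\ZZ[\zeta]$, so $R$ would be a unit in $\ZZ[\zeta]$. The strategy is to derive a contradiction by using the second expression and the hypothesis on $h(\alpha_0)$.

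To that end, factor $G_{d,m,\ell}^{\zeta} = \prod_i h_i^{e_i}$ into irreducibles over $\QQ(\zeta)$ (each $h_i$ monic in $\ZZ[\zeta][x]$ by integral closure), with $\alpha_0$ a root of $h_1$. Grouping roots of $G_{d,m,\ell}^{\zeta}$ into Galois orbits over $\QQ(\zeta)$ transforms the second expression into
\[ R \;=\; \pm \prod_i \bigl(N_{L_i/\QQ(\zeta)}(h(\alpha_i))\bigr)^{e_i}, \]
where $\alpha_i$ is any root of $h_i$ and $L_i = \QQ(\alpha_i,\zeta)$. Note $\zeta \in L$ because the defining relation $\zeta = a_{m+\ell-1}(\alpha_0)/a_{m-1}(\alpha_0)$ exhibits $\zeta$ as a ratio of elements of $L$, so $L_1 = L$. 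The key observation is that if $h(\alpha_0)$ is not a unit in $\calO_L$, then $N_{L/\QQ(\zeta)}(h(\alpha_0))$ is not a unit in $\ZZ[\zeta]$: indeed, $h(\alpha_0)$ divides its norm in $\calO_L$ (the cofactor being a product of Galois conjugates, an algebraic integer), so if the norm were a unit in $\ZZ[\zeta] \subseteq \calO_L$ it would be a unit in $\calO_L$, forcing $h(\alpha_0)$ itself to be a unit in $\calO_L$, contrary to hypothesis. Since $\ZZ[\zeta]$ is an integral domain and the remaining factors $N_{L_i/\QQ(\zeta)}(h(\alpha_i))^{e_i}$ lie in $\ZZ[\zeta]$, we conclude that $R$ is not a unit in $\ZZ[\zeta]$, contradicting the previous paragraph and proving the lemma.

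The main obstacle is just careful bookkeeping: ensuring that the various resultant factorizations live in $\ZZ[\zeta]$ rather than merely in $\QQ(\zeta)$, and that unit/non-unit status transfers correctly across the three rings $\ZZ[\zeta] \subseteq \calO_L$ and $\ZZ[\zeta] \subseteq \calO_K$. The conceptual heart of the argument is the observation that divisibility of $N_{L/\QQ(\zeta)}(h(\alpha_0))$ by $h(\alpha_0)$ in $\calO_L$ lets us push the non-unit-ness of $h(\alpha_0)$ down to $\ZZ[\zeta]$, after which multiplicativity of the resultant and $N_{K/\QQ(\zeta)}$ carry it back up to $\calO_K$.
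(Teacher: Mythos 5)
Your proof is correct, but it takes a genuinely different route from the paper's. The paper's argument is slicker: it introduces the minimal polynomial $g\in\ZZ[\zeta][c]$ of $G_{d,m,\ell}^{\zeta}(c_0)$ over $\QQ(\zeta)$, observes that $(g\circ G_{d,m,\ell}^{\zeta})(c_0)=0$ forces a factorization $g\circ G_{d,m,\ell}^{\zeta}=h\cdot u$ with $u\in\ZZ[\zeta][c]$, and then evaluates at $c=\alpha_0$ to get the single identity $g(0)=h(\alpha_0)u(\alpha_0)$. Since $g(0)$ is (up to sign) the norm $N_{K/\QQ(\zeta)}(G_{d,m,\ell}^{\zeta}(c_0))$ and $u(\alpha_0)$ is an algebraic integer, the non-unit-ness of $h(\alpha_0)$ immediately transfers. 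You instead compute the resultant $\Res(h,G_{d,m,\ell}^{\zeta})$ in two ways and play the two factorizations off against each other, which requires factoring $G_{d,m,\ell}^{\zeta}$ into irreducibles over $\QQ(\zeta)$ and tracking the Galois bookkeeping (the decomposition $R=\pm\prod_i N_{L_i/\QQ(\zeta)}(h(\alpha_i))^{e_i}$, the divisibility of a norm by any single conjugate, and the observation that $\zeta\in L$ so that $L_1=L$). Both arguments exploit the same resultant element of $\ZZ[\zeta]$ at bottom — the paper's $g(0)$ is precisely that resultant up to sign — but the paper reads off both interpretations from one polynomial identity, while you reconstruct the second interpretation through the irreducible factorization. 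Your version is somewhat longer and carries more moving parts (e.g. the need to justify $a_{m-1}(\alpha_0)\neq 0$ so that $\zeta\in L$, and the integral-closure step showing a unit of $\calO_K$ lying in $\ZZ[\zeta]$ is a unit there), but all the steps are sound and it would serve as a valid alternative proof.
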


\begin{proof}
Let $g\in \ZZ[\zeta][c]$ be the minimal polynomial of $G_{d,m,\ell}^{\zeta}(c_0)$ over $\QQ(\zeta)$.
We need to show that the norm $\text{N}_{K/\QQ(\zeta)}(G_{d,m,\ell}^{\zeta}(c_0))$
is not a unit in $\ZZ[\zeta]$, or equivalently, that
$g(0)$ is not a unit in $\ZZ[\zeta]$.

By definition of $g$, we have $(g\circ G_{d,m,\ell}^{\zeta})(c_0)=0$.
Since $h$ is the minimal polynomial of $c_0$ over $\QQ(\zeta)$,
there is a polynomial $u\in\ZZ[\zeta][c]$ such that $g\circ G_{d,m,\ell}^{\zeta}=h\cdot u$.
Evaluating at $\alpha_0$ and recalling that $G_{d,m,\ell}^{\zeta}(\alpha_0)=0$, we obtain
\[g(0) = h(\alpha_0)u(\alpha_0).\]
By hypothesis, $h(\alpha_0)$ is not a unit in $\calO_L$.
Since $u(\alpha_0)$ is also an algebraic integer, it follows that $g(0)=h(\alpha_0)u(\alpha_0)$
is also not a unit in $\calO_L$.
Therefore $g(0)$ is not a unit in $\calO_L\cap \QQ(\zeta) = \ZZ[\zeta]$, as desired.
\end{proof}

\begin{remark}
\label{rem:alpha_0_unit}
With notation as above, if $G_{d,m,n}^{\zeta}$ is irreducible over $\QQ(\zeta)$, then
Lemma~\ref{lem:5.7} says that if $G_{d,m,n}^{\zeta}(\alpha_0)$ is not a unit in $\calO_L$,
then $G_{d,m,\ell}^{\zeta}(c_0)$ is not a unit in $\calO_K$.
\end{remark}

Motivated by Remark~\ref{rem:alpha_0_unit},
we turn our attention to the algebraic integers $G_{d,m,n}^{\zeta}(\alpha_0)$,
where $\alpha_0$ is a root of $G_{d,m,\ell}^{\zeta}$ for some proper divisor $\ell$ of $n$.
To prove the \emph{only if} direction of Conjecture~\ref{conj:j=m},
it suffices to show that these algebraic integers are not algebraic units.

Fix integers $d,m,n\geq 2$, and let $\zeta\neq 1$ be a $d$-th root of unity.
Let $c_0$ be a root  of $G_{d,m,n}^{\zeta}$,
fix a positive integer $\ell$ that is a \emph{proper} divisor of $n$,
and let $\alpha_0$ be a root of $G_{d,m,\ell}^{\zeta}$.
Define sequences $\{B_j\}_{j\geq 1}$ and $\{b_j\}_{j\geq 1}$ in the polynomial ring $\ZZ[\zeta,c]$ by
\begin{equation}
\label{eq:Bdef}
B_j:= a_{m+\ell j-1} - \zeta a_{m-1}
\quad\text{and}\quad
b_j := \frac{B_j}{B_1} = \frac{a_{m+\ell j-1} - \zeta a_{m-1}}{a_{m+\ell-1} - \zeta a_{m-1}}.
\end{equation}
To see that $b_j$ is indeed a polynomial in $\ZZ[\zeta,c]$,
both its numerator $a_{m+\ell j-1} - \zeta a_{m-1}$ and denominator $a_{m+\ell-1} - \zeta a_{m-1}$
are monic polynomials in $\ZZ[\zeta,c]$, and as shown in the proof of Theorem~A.1 of \cite{Epstein12},
they both have only simple roots. Moreover, because $\ell | \ell j$, Lemma~2.2 of \cite{BG1}
shows that every root of the denominator is also a root of the numerator.
Thus, as claimed, the quotient $b_j$ is indeed a monic polynomial in $\ZZ[\zeta,c]$.


\begin{lemma}
\label{lem:b_i}
Let $d,m,n\geq 2$. Let $\ell$ be a proper divisor of $n$, and let $\alpha_0$ be a root of $G_{d,m,\ell}^{\zeta}$,
where $\zeta\neq 1$ is a $d$-th root of unity. Set $L=\QQ(\alpha_0)$. Then
\[ \big\la G_{d,m,n}^{\zeta}(\alpha_0)\big\ra = \prod_{j| (n/\ell)} \big\la b_j(\alpha_0)\big\ra^{\mu(n/ (j \ell))}\]
as ideals in $\calO_L$.
\end{lemma}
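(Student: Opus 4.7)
The plan is to expand $G_{d,m,n}^{\zeta}(\alpha_0)$ via the M\"obius-product definition~\eqref{eq:Gdef}, split the outer product over $k\mid n$ into the subsets $\{k:\ell\mid k\}$ and $\{k:\ell\nmid k\}$, and show that the second subset (together with any correction factor) contributes the trivial ideal while the first exactly recovers the claimed right-hand side. The key algebraic move is to use the identity $B_j = b_j B_1$ in $\ZZ[\zeta][c]$ to absorb the vanishing of the in-period factors before evaluating at $\alpha_0$.

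Concretely, I would start from
\[ G_{d,m,n}^{\zeta}(c) = \prod_{j\mid n/\ell} B_j(c)^{\mu(n/(j\ell))} \cdot \prod_{\substack{k\mid n\\ \ell\nmid k}}\bigl(a_{m+k-1}-\zeta a_{m-1}\bigr)^{\mu(n/k)} \cdot C(c), \]
where $C(c)=\prod_{k\mid n}a_k(c)^{-\mu(n/k)}$ if $n\mid m-1$ and $C(c)=1$ otherwise. Substituting $B_j=b_j B_1$ in the first product and using $\sum_{j\mid n/\ell}\mu(n/(j\ell))=0$ (which holds since $n/\ell>1$), the net exponent of $B_1$ collapses to zero, leaving $\prod_{j\mid n/\ell} b_j(c)^{\mu(n/(j\ell))}$; evaluating at $\alpha_0$ gives a well-defined nonzero product of ideals since each $B_j$ has only simple roots, so $b_j(\alpha_0)\neq 0$. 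For the second product, Lemma~\ref{lem:samev} applied to the period-$\ell$ parameter $\alpha_0$ shows that the ideal $\la a_{m+k-1}(\alpha_0)-\zeta a_{m-1}(\alpha_0)\ra$ depends only on $t:=\gcd(k,\ell)<\ell$, and is nonzero by Remark~\ref{rem:ndivu}. Grouping the factors by $t$ and invoking Lemma~\ref{lem:elementary} with the roles of its $n$ and $\ell$ played by my $\ell$ and $n$ (permissible since $\ell\mid n$ and $\ell<n$ force $n\nmid\ell$) causes every exponent $E_t=\sum_{k\mid n,\, \gcd(k,\ell)=t}\mu(n/k)$ to vanish, so this piece contributes the unit ideal $\calO_L$.

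It remains to show $\la C(\alpha_0)\ra=\calO_L$ when $n\mid m-1$, which I expect to be the main obstacle. Observe that $C(c)^{-1}=\prod_{k\mid n}a_k(c)^{\mu(n/k)}$ is exactly the period-$n$ Gleason polynomial $\phi_n(c)$. Since $\alpha_0$ has exact period $\ell\ne n$, \cite[Lemma~26]{BEK19} implies that $\Res(G_{d,m,\ell}^{\zeta},\phi_n)$ is a unit in $\ZZ[\zeta]$. Factoring this resultant as a product, over Galois orbits of the roots $\beta$ of $G_{d,m,\ell}^{\zeta}$, of the norms of $\phi_n(\beta)$, and using that $\ZZ[\zeta]$ is a Dedekind domain (so that every integral divisor of a unit is a unit), I deduce that the norm of $\phi_n(\alpha_0)$ is a unit in $\ZZ[\zeta]$, whence $\phi_n(\alpha_0)\in\calO_L^{\times}$. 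Combining the three contributions completes the proof. Unlike the combinatorial cancellations in the first two steps, this last step requires an external resultant result together with a brief Galois-theoretic unpacking, which is why I flag it as the hardest part of the argument.
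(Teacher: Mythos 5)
Your proposal matches the paper's strategy for the bulk of the lemma: the paper also splits the M\"obius product over $k\mid n$ into the factors with $\ell\nmid k$ (which it calls $H_1$) and those with $\ell\mid k$ (called $H_2$), kills the $H_1$ contribution via Lemma~\ref{lem:samev} plus Lemma~\ref{lem:elementary} exactly as you do, and rewrites $H_2$ using $B_j=b_jB_1$ together with the vanishing Möbius sum $\sum_{j\mid n/\ell}\mu(n/(j\ell))=0$. Your remark that $b_j(\alpha_0)\neq 0$ because $B_j$ has simple roots and $B_1(\alpha_0)=0$ is correct and is the same justification the paper relies on implicitly.

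Where you genuinely diverge is the correction factor $C(c)=\prod_{k\mid n}a_k^{-\mu(n/k)}$ in the case $n\mid m-1$. The paper disposes of it by citing Proposition~3.1 of~\cite{BG1}, which computes the ideals $\la a_k(\alpha_0)\ra$ directly: $\la a_k(\alpha_0)\ra=\la a_{\ell}(\alpha_0)\ra$ when $\ell\mid k$, and $a_k(\alpha_0)$ is a unit when $\ell\nmid k$, after which the same vanishing Möbius sum makes the product collapse to $\calO_L$. You instead recognize $C(c)^{-1}$ as the period-$n$ Gleason polynomial and invoke \cite[Lemma~26]{BEK19} to conclude that $\Res(G_{d,m,\ell}^{\zeta},\phi_n)$ is a unit because $\ell\neq n$, then unwind to get that $\phi_n(\alpha_0)$ is a unit in $\calO_L$. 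The logic of that unwinding (norm of an algebraic integer is a unit implies the integer is a unit, and a factor of a unit in $\ZZ[\zeta]$ is a unit) is sound. The route is genuinely different: the paper's is self-contained within its companion paper and yields finer local information about the individual ideals $\la a_k(\alpha_0)\ra$, while yours treats the whole correction as a single unit via an external resultant result. One caution: you should confirm that \cite[Lemma~26]{BEK19} is stated for general unicritical degree $d$ and general $\zeta$, and in the precise form you need (it is paraphrased in the paper's introduction in terms of evaluating Misiurewicz polynomials at Gleason parameters, and you use the resultant symmetrically to flip the roles); the paper's reliance on \cite{BG1} rather than \cite{BEK19} here suggests the authors wanted a statement with no such ambiguity.
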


\begin{proof}
Suppose first that $n\nmid m-1$. By the definition of $G_{d,m,n}^{\zeta}$, we have
\[G_{d,m,n}^{\zeta} =
\prod_{k|n} \big(a_{m+k-1}-\zeta a_{m-1}\big)^{\mu(n/k)} = H_1 \cdot H_2, \]
where
\[ H_1 := \prod_{\substack{t|\ell \\ t\neq \ell}} \prod_{\substack{k|n\\ \gcd(k,\ell)=t}}
\big(a_{m+k-1}-\zeta a_{m-1}\big)^{\mu(n/k)}
\quad\text{and}\quad
H_2 := \prod_{\substack{k|n \\ \ell | k}}
\big(a_{m+k-1}-\zeta a_{m-1}\big)^{\mu(n/k)}. \]
By Remark~\ref{rem:ndivu}, none of the polynomials $a_{m+k-1}-\zeta a_{m-1}$ appearing
in the product defining $H_1$ has a zero at $\alpha_0$, whereas all of the terms
in the product defining $H_2$ are zero at $\alpha_0$.

For any \emph{proper} divisor $t$ of $\ell$, 
and for any integer $k\geq 1$ with $\gcd(k,\ell)=t$,
Lemma~\ref{lem:samev} and the fact that $f_{d,\alpha_0}$ has exact type $(m,\ell)$ yield
\[\big\la a_{m+k-1}(\alpha_0)-\zeta a_{m-1}(\alpha_0)\big\ra =
\big\la a_{m+t-1}(\alpha_0)-\zeta a_{m-1}(\alpha_0)\big\ra\]
as ideals in $\calO_L$.
It follows that for any proper divisor $t$ of $\ell$, we have
\[ \prod_{\substack{k|n\\ \text{gcd}(k,\ell)=t}}
\big\la a_{m+k-1}(\alpha_0)-\zeta a_{m-1}(\alpha_0)\big\ra^{\mu(n/k)}
= \big\la a_{m+t-1}(\alpha_0)-\zeta a_{m-1}(\alpha_0)\big\ra^{E_t} ,\]
where
\[ E_t := \sum_{\substack{k|n \\ \gcd(k,\ell)=t}} \mu\bigg(\frac{n}{k}\bigg) . \]
However, according to Lemma~\ref{lem:elementary}, we have $E_t=0$.
Taking the product over all such $t$,
it follows that the ideal $\big\la H_1(\alpha_0) \big\ra$ is simply the identity ideal $\calO_L$.

By writing $k=j\ell$, we may rewrite the product defining $H_2$ as
\[ H_2 = \prod_{j|(n/\ell)} B_j^{\mu(n/(j\ell))}
= \prod_{j|(n/\ell)} \bigg(\frac{B_j}{B_1}\bigg)^{\mu(n/(j\ell))}
= \prod_{j|(n/\ell)} b_j^{\mu(n/(j\ell))}, \]
by the definition of $B_j$ and $b_j$ from equation~\eqref{eq:Bdef},
where we have used the fact that $\ell$ is a \emph{proper} divisor of $n$,
and hence that $\sum_{j|(n/\ell)} \mu( n/(j\ell)) = 0$. Thus, we have
\[ \big\la G_{d,m,n}^{\zeta}(\alpha_0)\big\ra = \big\la H_1(\alpha_0) \big\ra \big\la H_2(\alpha_0) \big\ra
= \prod_{j| (n/\ell)} \big\la b_j(\alpha_0)\big\ra^{\mu(n/ (j \ell))}, \]
completing the proof in the case that $n\nmid m-1$.
 
Next, we suppose that $n \, | \, m-1$. With $H_1$ and $H_2$ as before, we have
\[\la G_{d,m,n}^{\zeta}(\alpha_0)\ra = \la H_1(\alpha_0)\ra \cdot \la H_2(\alpha_0)\ra
\cdot  \prod_{k|n} \la a_k(\alpha_0)\ra^{-\mu(n/k)} .\]
Hence, to finish the proof, it suffices to show that
\[ \prod_{k|n} \big\la a_k(\alpha_0)\big\ra^{\mu(n/k)}=\calO_L. \]
Since $\alpha_0$ is a root of $G_{d,m,\ell}^{\zeta}$,
Proposition~3.1 of \cite{BG1} shows that if $\ell | k$, then
$\la a_k(\alpha_0)\ra=\la a_{\ell}(\alpha_0)\ra$ as ideals in $\calO_L$;
and if $\ell\nmid k$, then $a_k(\alpha_0)$ is a unit in $\calO_L$,
so that $\la a_k(\alpha_0)\ra=\calO_L$. Thus,
\[ \prod_{k|n} \big\la a_k(\alpha_0)\big\ra^{\mu(n/k)}
= \prod_{j|(n/\ell)} \big\la a_{\ell}(\alpha_0)\big\ra^{\mu(n/(j\ell))} = \calO_L, \]
where the last equality is
because the nonzero ideal $\la a_{\ell}(\alpha_0)\ra$
is being raised to the power $\sum_{j|(n/\ell)}\mu(\frac{n}{j\ell})=0$,
since $n / \ell > 1$.
\end{proof}

\section{Results on multipliers}
\label{sec:mult}
For a Misiurewicz parameter $\alpha_0$ with type $(m,\ell)$,
the algebraic integers $b_j(\alpha)$ defined by equation~\eqref{eq:Bdef} of Section~\ref{sec:techlem}
turn out to be related to the multiplier of the periodic cycle in the associated critical orbit,
as we now discuss.

Fix integers $d,m\geq 2$ and $n \geq 1$.
Let $\zeta\neq 1$ be a $d$-th root of unity.
Consider a root $\alpha_0$ of $G_{d,m,n}^\zeta$.
Since $f:=f_{d,\alpha_0}$ has $f'(z)=dz^{d-1}$, equation~\eqref{eq:multdef} shows that the multiplier
of the periodic cycle $\{a_m(\alpha_0),\dots, a_{m+n-1}(\alpha_0)\}$ is
\begin{equation}
\label{eq:lambda}
\lambda_{d,m,n}^{\zeta}(\alpha_0) = d^n \Bigg(\prod_{i=0}^{n-1}a_{m+i-1}(\alpha_0)\Bigg)^{d-1}.
\end{equation}

The following result shows that the sequence $\{b_j(\alpha_0)\}_{j\geq 1}$ satisfies a linear recurrence, which will immediately allow us to write a simple and explicit formula for $b_j(\alpha_0)$.

\begin{thm}
\label{thm:recur}
Fix integers $d,m,n\geq 2$, a proper divisor $\ell\geq 1$ of $n$,
and a $d$-th root of unity $\zeta\neq 1$. Define
\[ C_{d,m,\ell}:= d^{\ell} \bigg( \prod_{i=0}^{\ell-1} a_{m-1+i} \bigg)^{d-1} \in \ZZ[c] .\]
Then for every integer $j\geq 1$, we have
\[ b_{j+1} \equiv \zeta^{d-1}C_{d,m,\ell} b_j + 1 \pmod{I}, \]
where $I:=\la B_1 \ra$ is the principal ideal of $\ZZ[\zeta,c]$ generated by $B_1=a_{m+\ell - 1}-\zeta a_{m-1}$.
In particular, for any root $\alpha_0$ of $B_1$, we have
\[ b_{j+1}(\alpha_0) = \zeta^{d-1} C_{d,m,\ell}(\alpha_0) b_j(\alpha_0) + 1. \]
\end{thm}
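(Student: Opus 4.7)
My plan is to reduce everything to the recursion $a_{k+1} = f(a_k)$, where $f := f_{d,c}(z) = z^d + c$, so that $a_{k+\ell} = f^{\ell}(a_k)$. Writing $a_{m+\ell j-1} = \zeta a_{m-1} + B_j$, the definition of $B_{j+1}$ becomes
$$B_{j+1} = f^{\ell}(a_{m+\ell j-1}) - \zeta a_{m-1} = f^{\ell}(\zeta a_{m-1} + B_j) - \zeta a_{m-1}.$$
I then apply the universal polynomial identity
$$P(y+z) - P(y) = P'(y)\,z + z^{2} R(y,z) \qquad \text{for some } R \in \ZZ[y,z],$$
valid for any $P \in \ZZ[z]$, to $P = f^{\ell} \in \ZZ[c][z]$ with $y = \zeta a_{m-1}$ and $z = B_j$. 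This produces a clean Taylor-type expansion of $B_{j+1}$ entirely inside the ring $\ZZ[\zeta,c]$, with no denominators to worry about.

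The crucial number-theoretic input is $\zeta^{d}=1$, which makes $f(\zeta a_{m-1}) = \zeta^{d}a_{m-1}^{d} + c = a_{m-1}^{d} + c = a_{m}$, so $f^{i}(\zeta a_{m-1}) = a_{m+i-1}$ for all $i \geq 1$. Consequently, the constant term of the expansion is
$$f^{\ell}(\zeta a_{m-1}) - \zeta a_{m-1} \;=\; a_{m+\ell-1} - \zeta a_{m-1} \;=\; B_{1},$$
and the chain rule gives the linear coefficient
$$(f^{\ell})'(\zeta a_{m-1}) \;=\; \prod_{i=0}^{\ell-1} f'\bigl(f^{i}(\zeta a_{m-1})\bigr) \;=\; d\,(\zeta a_{m-1})^{d-1}\prod_{i=1}^{\ell-1} d\,a_{m+i-1}^{d-1} \;=\; \zeta^{d-1}\,C_{d,m,\ell}.$$
Since $B_j = B_1 b_j$ in $\ZZ[\zeta,c]$, the quadratic remainder $B_j^{2}\,R(\zeta a_{m-1},B_j)$ lies in $\la B_1^{2}\ra$, yielding
$$B_{j+1} \;\equiv\; B_{1} + \zeta^{d-1}\,C_{d,m,\ell}\,B_{j} \pmod{B_{1}^{2}}.$$

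Finally, I substitute $B_{j+1} = B_1 b_{j+1}$ and $B_j = B_1 b_j$ and use that $\ZZ[\zeta,c]$ is an integral domain to cancel a factor of $B_1$ from both sides, giving $b_{j+1} \equiv 1 + \zeta^{d-1}\,C_{d,m,\ell}\,b_j \pmod{I}$; evaluating at $c = \alpha_0$ then produces the stated equality. The only real obstacle is rigorously justifying the Taylor-style expansion in the integer-coefficient ring, but this is settled by the identity above: expanding $(y+z)^{k} - y^{k} = z\sum_{j=0}^{k-1}(y+z)^{j}y^{k-1-j}$ and then subtracting the linear-in-$z$ term shows automatically that the remainder sits in $z^{2}\ZZ[y,z]$, so no division by any factorial is ever invoked.
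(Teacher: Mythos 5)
Your proof is correct, and it takes a genuinely cleaner route than the paper's. The paper proves the same linearization by working one iterate of $f$ at a time: it repeatedly applies the factorization $f(P)-f(Q)=(P-Q)\sum_{t=0}^{d-1}P^tQ^{d-1-t}$ together with the congruence $\sum_t P^tQ^{d-1-t}\equiv dQ^{d-1}\pmod{I}$, and then telescopes these $\ell$ single-step congruences (modulo $I^2$) to relate $B_{j+1}-B_1$ to $\zeta^{d-1}C_{d,m,\ell}B_j$. You instead apply a one-shot first-order Taylor expansion $P(y+z)-P(y)=P'(y)z+z^2R(y,z)$ (with $R\in\ZZ[y,z]$, justified monomially without any factorial denominators) directly to $P=f^{\ell}$ at $y=\zeta a_{m-1}$, and then identify the linear coefficient as $(f^{\ell})'(\zeta a_{m-1})=\zeta^{d-1}C_{d,m,\ell}$ by the chain rule. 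Conceptually these are the same idea — linearize the return map around the point $\zeta a_{m-1}$ — but your version packages the $\ell$-fold chaining into a single use of the chain rule, which makes the appearance of $C_{d,m,\ell}$ (essentially the multiplier, cf.~Remark~\ref{rem:mult}) transparent rather than emergent from bookkeeping. The final cancellation of $B_1$ from both sides is also handled correctly: since $B_j=b_jB_1$ in $\ZZ[\zeta,c]$, the error term $B_j^2R$ lies in $\la B_1^2\ra$, and since $\ZZ[\zeta,c]$ is an integral domain and $B_1\ne 0$ you may cancel one factor of $B_1$, exactly as the paper does.
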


\begin{proof}
Clearly, we have $a_{m+\ell-1} \equiv \zeta a_{m-1} \pmod{I}$,
and hence, repeatedly raising each side to the power $d$, we have
$a_k\equiv a_{k+\ell} \pmod{I}$ for every $k\geq m$.
Therefore, for every $j\geq 1$ and $i\geq 0$, we have
\begin{equation}
\label{eq:keycong}
a_{m+\ell j-1} \equiv \zeta a_{m-1} \pmod{I}
\quad\text{and}\quad
a_{m+\ell j+i} \equiv a_{m+i} \pmod{I} .
\end{equation}
Also observe that for any $P, Q\in\ZZ[\zeta,c]$, if we let $f(z):=z^d+c$, we have
\[ f(P)-f(Q) = P^d - Q^d = (P-Q) \sum_{t=0}^{d-1} P^t Q^{d-1-t} .\]
Thus, for any $j\geq 1$, applying the first congruence of~\eqref{eq:keycong} gives
\begin{align}
\label{eq:subcong1}
a_{m+\ell j} - a_{m} &=
(a_{m+\ell j-1} - \zeta a_{m-1} ) \sum_{t=0}^{d-1} a_{m+\ell j-1}^t (\zeta a_{m-1})^{d-1-t}
\\
& \equiv d (\zeta a_{m-1})^{d-1} (a_{m+\ell j-1} - \zeta a_{m-1} ) \pmod{I^2} ,
\notag
\end{align}
where we have used the fact that $a_{m+\ell j-1} - \zeta a_{m-1}\in I$ to obtain
the congruence modulo $I^2$.
Similarly, for every $j\geq 1$ and every $i=1,\ldots,\ell-1$,
the second congruence of~\eqref{eq:keycong} yields
\begin{align}
\label{eq:subcong2}
a_{m+\ell j+i} - a_{m+i} &=
(a_{m+\ell j+i-1} - a_{m+i-1} ) \sum_{t=0}^{d-1} a_{m+\ell j+i-1}^t (a_{m+i-1})^{d-1-t}
\\
& \equiv d (a_{m+i-1})^{d-1} (a_{m+\ell j+i-1} - a_{m+i-1} ) \pmod{I^2} .
\notag
\end{align}
Thus, for any $j\geq 1$, working in the ring $\ZZ[\zeta,c]$ modulo $I^2$, we have
\begin{align*}
\zeta^{d-1} C_{d,m,\ell} B_j &=
\Bigg[ d^{\ell-1} \Bigg( \prod_{i=1}^{\ell-1} a_{m-1+i} \Bigg)^{d-1}\Bigg] \cdot
d (\zeta a_{m-1})^{d-1} (a_{m+\ell j-1} - \zeta a_{m-1} )
\\
&\equiv
d^{\ell-1} \Bigg( \prod_{i=1}^{\ell-1} a_{m-1+i} \Bigg)^{d-1} (a_{m+\ell j} - a_m)
\\
&=
\Bigg[ d^{\ell-2} \Bigg( \prod_{i=2}^{\ell-1} a_{m-1+i} \Bigg)^{d-1} \Bigg] \cdot
d (a_{m})^{d-1} (a_{m+\ell j} - a_{m} )
\\
&\equiv
d^{\ell-2} \Bigg( \prod_{i=2}^{\ell-1} a_{m-1+i} \Bigg)^{d-1} (a_{m+\ell j+1} - a_{m+1})
\\
&\cdots
\\
&\equiv a_{m+\ell j+\ell-1} - a_{m+\ell-1} = B_{j+1} - B_1,
\end{align*}
where we have used identity~\eqref{eq:subcong1} in the first congruence,
and identity~\eqref{eq:subcong2} in all the subsequent congruences.
Since these are congruences modulo $I^2$, where $I=\la B_1 \ra$,
and since $B_k\in I$ for every $k$ (by the first congruence of~\eqref{eq:keycong}),
we may divide both sides by $B_1$ to obtain
\[ \zeta^{d-1} C_{d,m,\ell} b_j \equiv b_{j+1} - 1 \pmod{I}
\quad\text{for every } j\geq 1, \]
yielding the first desired conclusion.
Evaluating both sides at $c=\alpha_0$ and using the fact that $F(\alpha_0)=0$
for every $F\in I$, the second conclusion follows immediately.
\end{proof}

\begin{cor}
\label{cor:recur}
With notation as in Theorem~\ref{thm:recur}, for every $j\geq 1$ we have
\[ b_j(\alpha_0) = 1+C+\cdots + C^{j-1} ,\]
where $C=\zeta^{d-1} C_{d,m,\ell}(\alpha_0)$.
\end{cor}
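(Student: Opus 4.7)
The plan is to derive the closed form by straightforward induction on $j$, using Theorem~\ref{thm:recur} as the inductive step and the trivial identity $b_1 = 1$ as the base case.

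First I would observe that by the definition of $b_j$ given in equation~\eqref{eq:Bdef}, we have $b_1 = B_1/B_1 = 1$ as an element of $\ZZ[\zeta,c]$, and hence $b_1(\alpha_0) = 1$. This matches the formula in the statement for $j=1$, since $1 + C + \cdots + C^{j-1}$ consists of a single summand $1$ when $j = 1$.

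For the inductive step, suppose $b_j(\alpha_0) = 1 + C + \cdots + C^{j-1}$. Applying the second conclusion of Theorem~\ref{thm:recur} directly gives
\[ b_{j+1}(\alpha_0) = C \cdot b_j(\alpha_0) + 1 = C(1 + C + \cdots + C^{j-1}) + 1 = 1 + C + C^2 + \cdots + C^{j}, \]
completing the induction. There is no real obstacle here: all the substantive content has been packed into Theorem~\ref{thm:recur}, and the corollary is essentially a cosmetic restatement of the recurrence together with the base case $b_1 = 1$. The only thing to double-check is that the base case genuinely is $b_1 = 1$ (rather than, say, forcing the reader to interpret the empty product/sum convention), but this is immediate from the definition of $b_j$ as the ratio $B_j/B_1$.
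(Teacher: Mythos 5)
Your proof is correct and matches the paper's argument exactly: both establish the base case $b_1(\alpha_0)=1$ from the definition $b_1=B_1/B_1$ and then apply the recurrence $b_{j+1}(\alpha_0)=Cb_j(\alpha_0)+1$ from Theorem~\ref{thm:recur} to complete the induction. Nothing to flag.
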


\begin{proof}
We have $b_1(\alpha_0)=1$ by definition, and $b_{j+1}(\alpha_0)=Cb_j(\alpha_0)+1$
for every $j\geq 1$ by Theorem~\ref{thm:recur}.
The conclusion is immediate by induction.
\end{proof}

\begin{remark}
\label{rem:mult}
With the notation in Corollary~\ref{cor:recur}, we have $C=\lambda_{d,m,\ell}^{\zeta}(\alpha_0)$.
Indeed, substituting $a_{m+\ell-1}(\alpha_0) = \zeta a_{m-1}(\alpha_0)$ into
$C=\zeta^{d-1}d^{\ell} \prod_{i=0}^{\ell-1} a_{m-1+i}^{d-1}(\alpha_0)$
immediately yields
$C= d^{\ell} \prod_{i=0}^{\ell-1} a_{m+i}^{d-1}(\alpha_0) = \lambda_{d,m,\ell}^{\zeta}(\alpha_0)$.
\end{remark}

The following result
reduces Conjecture~\ref{conj:j=m} to a question about multipliers
and cyclotomic polynomials $\Phi_j$.

\begin{prop}
\label{prop:reduction}
Fix integers $d,m,n\geq 2$, a proper divisor $\ell\geq 1$ of $n$,
and a $d$-th root of unity $\zeta\neq 1$.
Let $\alpha_0$ be a root of $G_{d,m,\ell}^{\zeta}$, and set $L=\QQ(\alpha_0)$.
Suppose that $G_{d,m,n}^{\zeta}$ is irreducible over $\QQ(\zeta)$,
and let $c_0$ be one of its roots. Set $K=\QQ(c_0)$.
If $\Phi_{n/\ell}(\lambda_{d,m,\ell}^{\zeta}(\alpha_0))$ is not a unit in $\calO_L$,
then $G_{d,m,\ell}^{\zeta}(c_0)$ is not a unit in $\calO_K$.
\end{prop}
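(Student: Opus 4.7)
The plan is to reduce the claim to showing that $G_{d,m,n}^{\zeta}(\alpha_0)$ is a non-unit in $\calO_L$, and then to identify the ideal it generates with the ideal generated by $\Phi_{n/\ell}\bigl(\lambda_{d,m,\ell}^{\zeta}(\alpha_0)\bigr)$. Since $G_{d,m,n}^{\zeta}$ is assumed irreducible over $\QQ(\zeta)$, Remark~\ref{rem:alpha_0_unit} applies with $h = G_{d,m,n}^{\zeta}$, so it suffices to prove that $G_{d,m,n}^{\zeta}(\alpha_0) \notin \calO_L^{\times}$.

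To this end, set $N := n/\ell$, which satisfies $N \geq 2$ because $\ell$ is a proper divisor of $n$. First I would invoke Lemma~\ref{lem:b_i} to write
$$ \la G_{d,m,n}^{\zeta}(\alpha_0)\ra = \prod_{j \mid N} \la b_j(\alpha_0)\ra^{\mu(N/j)} $$
as fractional ideals in $\calO_L$. Setting $C := \lambda_{d,m,\ell}^{\zeta}(\alpha_0)$, Corollary~\ref{cor:recur} together with Remark~\ref{rem:mult} gives $b_j(\alpha_0) = 1 + C + \cdots + C^{j-1}$. When $C \neq 1$, this equals $(C^j - 1)/(C - 1)$, and the Möbius product $\Phi_N(x) = \prod_{j \mid N}(x^j-1)^{\mu(N/j)}$ combined with the vanishing $\sum_{j \mid N}\mu(N/j) = 0$ (valid since $N > 1$) would yield the element identity
$$ \prod_{j \mid N} b_j(\alpha_0)^{\mu(N/j)} = \frac{\prod_{j\mid N}(C^j-1)^{\mu(N/j)}}{(C-1)^{\sum_{j\mid N}\mu(N/j)}} = \Phi_N(C); $$
the borderline case $C = 1$ (where $b_j(\alpha_0) = j$) is handled analogously via the classical formula $\Phi_N(1) = \prod_{j \mid N} j^{\mu(N/j)}$.

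Combining the two displays produces the ideal equality $\la G_{d,m,n}^{\zeta}(\alpha_0)\ra = \la \Phi_{n/\ell}(\lambda_{d,m,\ell}^{\zeta}(\alpha_0))\ra$ in $\calO_L$. The hypothesis that the right-hand generator is not a unit then transfers directly to $G_{d,m,n}^{\zeta}(\alpha_0)$, and the initial reduction finishes the proof.

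The one mildly delicate step is passing from the ideal identity of Lemma~\ref{lem:b_i} to a bona fide element identity, which implicitly requires each $b_j(\alpha_0)$ to be nonzero—equivalently, that $C$ not be a nontrivial root of unity. I anticipate this being a short observation rather than a real obstacle: Eberlein's theorem (cited in the introduction) guarantees that the periodic cycle at any Misiurewicz parameter is repelling in the standard complex embedding, so $|C| > 1$ there and $C$ cannot be a root of unity.
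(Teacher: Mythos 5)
Your proposal is correct and follows essentially the same route as the paper: reduce via Remark~\ref{rem:alpha_0_unit} to showing $G_{d,m,n}^{\zeta}(\alpha_0)$ is a non-unit, apply Lemma~\ref{lem:b_i}, identify $b_j(\alpha_0)$ with $(C^j-1)/(C-1)$ via Corollary~\ref{cor:recur} and Remark~\ref{rem:mult}, and collapse the M\"obius product to $\Phi_{n/\ell}(C)$. The one place you diverge from the paper is in ruling out $C$ being a root of unity: you invoke Eberlein's theorem that the periodic cycle at a Misiurewicz parameter is repelling, whereas the paper argues that a root-of-unity multiplier would force a parabolic cycle and hence a wandering critical point, contradicting postcritical finiteness; both are standard complex-dynamical facts and both work. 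Having that in hand, your ``borderline case $C=1$'' and the formula $\Phi_N(1)=\prod_{j\mid N} j^{\mu(N/j)}$ are moot, and you correctly flag this yourself.
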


\begin{proof}
Define $C=\zeta^{d-1} C_{d,m,\ell}(\alpha_0)$, which is the multiplier
$\lambda_{d,m,\ell}^{\zeta}(\alpha_0)$
of the periodic cycle of $f:=f_{d,\alpha_0}$, by Remark~\ref{rem:mult}.
If $C$ is a root of unity (in particular, if $C=1$),
then $f$ has a parabolic periodic point, and hence by basic complex dynamics
(see, for example, \cite[Theorem~2.3]{CG} or \cite[Corollary~14.5]{Mil}),
some critical point of $f$ must be wandering, contradicting the fact
that the only two critical points of $f$ (at $z=0$ and $z=\infty$) are preperiodic.
Thus, $C\neq 1$, and hence Corollary~\ref{cor:recur} yields
\[ b_j(\alpha_0) = \frac{C^j - 1}{C-1} \quad \text{for every integer } j\geq 1 .\]
Therefore, by Lemma~\ref{lem:b_i}, there is a unit $u\in \calO_L$ such that
\[G_{d,m,n}^{\zeta}(\alpha_0) =
u \prod_{j| (n/\ell)} \bigg(\frac{C^j -1}{C-1}\bigg)^{\mu(n/ (j\ell))} =
u \prod_{j| (n/\ell)} \big(C^j -1\big)^{\mu(n/ (j\ell))},\]
where the second equality is because $C-1\neq 0$
is being raised to the power $\sum_{j|(n/\ell)}\mu(\frac{n}{j\ell})=0$,
Hence, by the definition of the cyclotomic polynomial $\Phi_{n/\ell}$, we obtain
\[G_{d,m,n}^{\zeta}(\alpha_0) = u \Phi_{n/\ell}(C) = u \Phi_{n/\ell}\big(\lambda_{d,m,\ell}^{\zeta}(\alpha_0)\big).\]
The result now immediately follows from Remark~\ref{rem:alpha_0_unit}.
\end{proof}

\section{Multiplier polynomials}
\label{sec:multpoly}

In light of Proposition~\ref{prop:reduction},
we state the following definition and conjecture.

\begin{defin}
\label{def:multpoly}
Let $d,m\geq 2$ and $n\geq 1$ be integers, and let $\zeta\neq 1$ be a $d$-th root of unity.
Let $c_1,\dots,c_k$ be all the roots of $G_{d,m,n}^{\zeta}$. The \emph{multiplier polynomial}
$P_{d,m,n}^{\zeta}$ associated with $G_{d,m,n}^{\zeta}$ is
\[P_{d,m,n}^{\zeta}(x)= \prod_{j=1}^{k} (x-\lambda_{d,m,n}^{\zeta}(c_j)) \in \ZZ[\zeta][x], \]
where $\lambda_{d,m,n}^\zeta$ is defined as in equation~\eqref{eq:lambda}.
\end{defin}

\begin{conj}
\label{conj:resultant}
Let $d,m\geq 2$ and $n\geq 1$ be integers. Let $c_1,\dots,c_k$ be roots of $G_{d,m,n}^{\zeta}$, where $\zeta\neq 1$ is a $d$-th root of unity. Then for every integer $i\geq 1$, the resultant
$\Res(P_{d,m,n}^{\zeta},\Phi_{i})$ is not a unit in $\ZZ[\zeta]$,
where $\Phi_i$ is the $i$-th cyclotomic polynomial.
\end{conj}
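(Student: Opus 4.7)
My plan is to reduce Conjecture~\ref{conj:resultant} to a $\mathfrak{p}$-adic question about the coefficients of $P_{d,m,n}^\zeta$ and then import a generalization of Theorem~\ref{thm:resultant>1_intro}. Fix any rational prime $p$ dividing $d$ and any prime $\mathfrak{p}$ of $\ZZ[\zeta]$ above $p$, and write
\[P_{d,m,n}^\zeta(x)\ =\ x^k+A_{k-1}x^{k-1}+\cdots+A_1x+A_0.\]
First I would extend Definition~\ref{def:p-special_intro} and Theorem~\ref{thm:resultant>1_intro} to $\ZZ[\zeta]$ by replacing $v_p$ with $v_\mathfrak{p}$ throughout, yielding a ``$\mathfrak{p}$-special'' criterion that forces $v_\mathfrak{p}(\Res(P_{d,m,n}^\zeta,\Phi_i))>0$ and hence non-unit-ness in $\ZZ[\zeta]$. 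Granted such a generalization, which I expect to follow by base-changing to the completion $\ZZ[\zeta]_\mathfrak{p}$ and repeating the argument of Section~\ref{sec:special} verbatim, the task becomes showing that $P_{d,m,n}^\zeta$ is $\mathfrak{p}$-special.

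For the coefficient bounds, I would exploit the formula
\[\lambda_{d,m,n}^\zeta(c_j)\ =\ d^n\prod_{i=0}^{n-1}a_{m+i-1}(c_j)^{d-1},\]
together with the integrality of each $a_i(c_j)$, to conclude that every root of $P_{d,m,n}^\zeta$ has $\mathfrak{p}$-adic valuation at least $nv_\mathfrak{p}(d)$. Taking elementary symmetric functions then gives the uniform lower bound
\[v_\mathfrak{p}(A_{k-r})\ \geq\ r\,n\,v_\mathfrak{p}(d)\qquad \text{for } 1\leq r\leq k,\]
so in particular $v_\mathfrak{p}(A_j)\geq 2nv_\mathfrak{p}(d)$ for every $j\leq k-2$. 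The $\mathfrak{p}$-special conditions thus collapse to a single \emph{exact} identity,
\[v_\mathfrak{p}(A_{k-1})\ =\ n\,v_\mathfrak{p}(d),\]
together with the numerical inequality $nv_\mathfrak{p}(d)>v_\mathfrak{p}(2)$, which holds automatically when $p$ is odd, and in the binary case $d=2$ requires $n\geq 2$ --- a restriction that the \emph{only if} direction of Conjecture~\ref{conj:j=m} already imposes.

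The main obstacle, then, is the trace identity
\[v_\mathfrak{p}\!\left(\sum_{j=1}^{k}\lambda_{d,m,n}^\zeta(c_j)\right)\ =\ n\,v_\mathfrak{p}(d),\]
i.e.\ ruling out $\mathfrak{p}$-adic cancellation among the Galois-conjugate multipliers beyond the common factor $d^n$. Since no closed form for $P_{d,m,n}^\zeta$ is available, the natural plan is to expand $\sum_j\prod_{i=0}^{n-1}a_{m+i-1}(c_j)^{d-1}$ as a $\ZZ[\zeta]$-linear combination of traces $\sum_j F(c_j)$ of explicit polynomials $F\in\ZZ[\zeta][c]$, evaluate these traces modulo $\mathfrak{p}^{nv_\mathfrak{p}(d)+1}$ using the recursion $a_{k+1}=a_k^d+c$ and the Misiurewicz relation $\zeta a_{m-1}(c_j)=a_{m+n-1}(c_j)$, and verify the residue is nonzero. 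The paper carries this program through for $d=2$ and $n\leq 2$ in Section~\ref{sec:finish}. Beyond that range the combinatorial explosion of cross terms is formidable, and the Magma data of Section~\ref{sec:appendix} suggests that the sharp value of $v_\mathfrak{p}(A_{k-1})$ is dictated by delicate congruences that would need to be understood uniformly in $m$, $n$, and $d$ before a full proof of Conjecture~\ref{conj:resultant} can be expected.
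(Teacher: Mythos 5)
This statement is an \emph{open conjecture} in the paper; no proof is given in general. The authors only establish it for $d=2$, $n\in\{1,2\}$, via the $2$-speciality of $P_{m,1}$ and $P_{m,2}$ (Theorem~\ref{thm:special12}) combined with Theorem~\ref{thm:special}. Your overall framework --- show the multiplier polynomial is $\mathfrak{p}$-special and invoke the cyclotomic-resultant theorem --- is exactly the paper's framework, so the structure of the reduction is sound.

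The concrete target you set yourself, however, is false. You propose proving the exact identity $v_\mathfrak{p}(A_{k-1}) = n\,v_\mathfrak{p}(d)$ (``no $\mathfrak{p}$-adic cancellation beyond the common factor $d^n$''), deduced from the crude lower bound $v_\mathfrak{p}(A_{k-r})\geq r\,n\,v_\mathfrak{p}(d)$. But the paper's own computations contradict this. For $d=2$, $n=1$, the proof of Theorem~\ref{thm:special12} computes the subleading coefficient explicitly: $B_{2^{m-1}-2} = -2^{m-1}$, so $v_2(A_{k-1}) = m-1$, which grows with $m$ and exceeds $n\,v_2(d)=1$ as soon as $m\geq 3$. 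There \emph{is} extra $2$-divisibility in the trace $\sum_j\lambda_j$. The paper does not need to rule it out; it needs to pin down its exact size and then show all lower coefficients carry strictly more. Your symmetric-function estimate $v_\mathfrak{p}(A_j)\geq 2n\,v_\mathfrak{p}(d)$ for $j\leq k-2$ is far too weak to give the strict comparison $v_\mathfrak{p}(A_j) > v_\mathfrak{p}(A_{k-1})$ once the latter is $m-1$ rather than $n\,v_\mathfrak{p}(d)$; Section~\ref{sec:finish} has to work much harder (Lemmas~\ref{lem:vjbound}, \ref{lem:valibound} and the four-step estimate) to prove $v_2(B_t) > m-1$ for all $1\leq t\leq 2^{m-1}-3$.

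Your numerical gate $n\,v_\mathfrak{p}(d) > v_\mathfrak{p}(2)$ also fails for $d=2$, $n=1$, and your suggestion that the $n=1$ case can be excluded is wrong: Proposition~\ref{prop:implies} with $\ell=1$ requires $\Res(P_{d,m,1}^\zeta,\Phi_n)$ to be a non-unit, and part~(1) of Theorem~\ref{thm:d=n=2} rests on precisely this. The $n=1$ case goes through \emph{because} $v_2(A_{k-1})=m-1 > v_2(2)$, not because $n\,v_2(d) > v_2(2)$. Finally, note that extending Theorem~\ref{thm:special} from $\ZZ$ to $\ZZ[\zeta]$ is not purely cosmetic: the argument uses the power basis $\{1,\zeta_\ell,\ldots,\zeta_\ell^{\phi(\ell)-1}\}$ of $\ZZ[\zeta_\ell]$ over $\ZZ$, and the corresponding integrality statement over $\ZZ[\zeta]$ would need to be justified separately. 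None of this produces a proof of the conjecture, which remains open even assuming Conjecture~\ref{conj:Misiurewicz_irr}.
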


The following result shows that if every $G_{d,m,n}^{\zeta}$ is irreducible over $\QQ(\zeta)$,
then Conjecture~\ref{conj:resultant} implies Conjecture~\ref{conj:j=m}.

\begin{prop}
\label{prop:implies}
Let $d,m,n\geq 2$ be integers. Let $c_0$ be a root of $G_{d,m,n}^{\zeta}$,
where $\zeta\neq 1$ is a $d$-th root of unity, and set $K=\QQ(c_0)$.
Assume that $G_{d,m,t}^{\zeta}$ is irreducible over $\QQ(\zeta)$ for any divisor $t$ of $n$.
Let $\ell\geq 1$ be a proper divisor of $n$, and suppose that $\Res(P_{d,m,\ell}^{\zeta}, \Phi_{n/\ell})$
is not a unit in $\ZZ[\zeta]$. Then $G_{d,m,\ell}^{\zeta}(c_0)$ is not a unit in $\calO_K$.
\end{prop}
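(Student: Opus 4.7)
The plan is to combine Proposition~\ref{prop:reduction} with the standard identification of a resultant with a norm, taking advantage of the assumed irreducibility of $G_{d,m,\ell}^{\zeta}$. Fix a root $\alpha_0$ of $G_{d,m,\ell}^{\zeta}$ and set $L=\QQ(\alpha_0)$. The hypothesis that $G_{d,m,t}^{\zeta}$ is irreducible for every divisor $t$ of $n$ includes the case $t=n$, so Proposition~\ref{prop:reduction} applies and reduces the problem to showing that $\Phi_{n/\ell}(\lambda_{d,m,\ell}^{\zeta}(\alpha_0))$ is not a unit in $\calO_L$.

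Let $c_1,\ldots,c_k$ be the roots of $G_{d,m,\ell}^{\zeta}$. Since $G_{d,m,\ell}^{\zeta}$ is irreducible over $\QQ(\zeta)$ with simple roots, the embeddings $L\hookrightarrow \overline{\QQ}$ over $\QQ(\zeta)$ are in bijection with the $c_j$ via $\sigma\mapsto\sigma(\alpha_0)$. Because $\lambda_{d,m,\ell}^{\zeta}(\alpha_0)$ is a polynomial expression in $\alpha_0$ by formula~\eqref{eq:lambda}, each embedding $\sigma$ satisfies $\sigma(\lambda_{d,m,\ell}^{\zeta}(\alpha_0))=\lambda_{d,m,\ell}^{\zeta}(c_j)$ for the corresponding $c_j$. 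Taking the product over all embeddings, we obtain
\[
\Norm_{L/\QQ(\zeta)}\bigl(\Phi_{n/\ell}(\lambda_{d,m,\ell}^{\zeta}(\alpha_0))\bigr) \;=\; \prod_{j=1}^{k} \Phi_{n/\ell}(\lambda_{d,m,\ell}^{\zeta}(c_j)).
\]
Since $P_{d,m,\ell}^{\zeta}$ is the monic polynomial with roots $\lambda_{d,m,\ell}^{\zeta}(c_j)$, the right-hand side equals $\pm\,\Res(P_{d,m,\ell}^{\zeta},\Phi_{n/\ell})$ by the standard resultant formula.

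By hypothesis, this resultant is not a unit in $\ZZ[\zeta]$, so the norm above is not a unit in $\ZZ[\zeta]$ either. Since the norm of a unit is a unit, $\Phi_{n/\ell}(\lambda_{d,m,\ell}^{\zeta}(\alpha_0))$ is not a unit in $\calO_L$, and Proposition~\ref{prop:reduction} then delivers the desired conclusion that $G_{d,m,\ell}^{\zeta}(c_0)$ is not a unit in $\calO_K$. I do not expect a substantive obstacle here beyond correctly translating between the ``extrinsic'' datum of the resultant of multiplier and cyclotomic polynomials and the ``intrinsic'' datum of a norm from $L$ down to $\QQ(\zeta)$; the irreducibility assumption on $G_{d,m,\ell}^{\zeta}$ is precisely what is required to make that translation, since without it the Galois conjugates of $\alpha_0$ over $\QQ(\zeta)$ would not exhaust all the roots of $G_{d,m,\ell}^{\zeta}$.
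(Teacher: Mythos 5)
Your proof is correct and follows the same essential strategy as the paper's: apply Proposition~\ref{prop:reduction} to reduce the problem to showing $\Phi_{n/\ell}\big(\lambda_{d,m,\ell}^{\zeta}(\alpha_0)\big)$ is not a unit, then deduce this from a norm/resultant identity supplied by the irreducibility of $G_{d,m,\ell}^{\zeta}$. However, your execution is noticeably cleaner than the paper's. The paper first passes to the minimal polynomial $h_{d,m,\ell}^{\zeta}$ of the multiplier, observes that $(h_{d,m,\ell}^{\zeta})^r=P_{d,m,\ell}^{\zeta}$ for an appropriate multiplicity $r$, relates $\Res(h_{d,m,\ell}^{\zeta},\Phi_{n/\ell})$ to a norm, and then invokes multiplicativity of the resultant to transfer the ``not a unit'' conclusion from $h_{d,m,\ell}^{\zeta}$ to $P_{d,m,\ell}^{\zeta}$. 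You avoid introducing $h_{d,m,\ell}^{\zeta}$ and $r$ entirely: since $\lambda_{d,m,\ell}^{\zeta}$ is a polynomial over $\ZZ[\zeta]$ and $G_{d,m,\ell}^{\zeta}$ is irreducible, the embeddings of $L=\QQ(\alpha_0)$ over $\QQ(\zeta)$ enumerate the roots $c_j$, so the full norm $\Norm_{L/\QQ(\zeta)}\big(\Phi_{n/\ell}(\lambda_{d,m,\ell}^{\zeta}(\alpha_0))\big)$ is exactly $\prod_j \Phi_{n/\ell}(\lambda_{d,m,\ell}^{\zeta}(c_j))=\pm\Res(P_{d,m,\ell}^{\zeta},\Phi_{n/\ell})$, with the multiplicities $r$ automatically accounted for. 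This one-step norm computation is both shorter and immune to any off-by-$r$ bookkeeping between $\Norm_{L/\QQ(\zeta)}$ and $\Norm_{\QQ(\zeta)(\lambda)/\QQ(\zeta)}$.
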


\begin{proof}
Fix a proper divisor $\ell$ of $n$.
Enumerate the roots of $G_{d,m,\ell}^{\zeta}$ as $\alpha_1,\dots,\alpha_k$.
Let $r\geq 1$ be the number of indices $j\in \{1,\dots,k\}$ for which
$\lambda_{d,m,\ell}^{\zeta}(\alpha_1) = \lambda_{d,m,\ell}^{\zeta}(\alpha_j)$.
Because $\lambda_{d,m,\ell}^{\zeta}\in \ZZ[\zeta][c]$ is a polynomial over $\QQ(\zeta)$,
and because we have assumed $G_{d,m,\ell}^{\zeta}$ is irreducible over $\QQ(\zeta)$,
we know that
\[ \lambda_{d,m,\ell}^{\zeta}(\alpha_1),\dots,\lambda_{d,m,\ell}^{\zeta}(\alpha_k) \]
are Galois conjugates over $\QQ(\zeta)$.
It follows that for any $1\leq i\leq k$, there are exactly $r$ distinct indices $j\in \{1,\dots,k\}$ such that
$\lambda_{d,m,\ell}^{\zeta}(\alpha_i) = \lambda_{d,m,\ell}^{\zeta}(\alpha_j)$.
Therefore, the minimal polynomial $h_{d,m,\ell}^{\zeta}$
of $\lambda_{d,m,\ell}^{\zeta}(\alpha_i)$ over $\QQ(\zeta)$ for $i=1,\dots,k$ satisfies
\begin{equation}
\label{eq:powerP}
(h_{d,m,\ell}^{\zeta})^r = P_{d,m,\ell}^{\zeta}.
\end{equation}

Set $L_i=\QQ(\alpha_i)$ for $1\leq i\leq k$. By the definition of resultant, we have
\[ \Res\big(h_{d,m,\ell}^{\zeta}, \Phi_{n/\ell}\big) =
\text{N}_{L_i/\QQ(\zeta)}\Big(\Phi_{n/\ell}\big(\lambda_{d,m,\ell}^{\zeta}(\alpha_i)\big)\Big)
\quad\text{for } i=1,\ldots, k, \]
where $N_{L_i/\QQ(\zeta)}$ denotes the norm from $L_i$ to $\QQ(\zeta)$.
Therefore, if $\Res(h_{d,m,\ell}^{\zeta}, \Phi_{n/\ell})$ is not a unit in $\ZZ[\zeta]$,
then $G_{d,m,\ell}^{\zeta}(c_0)$ is not a unit in $\calO_K$ by Proposition~\ref{prop:reduction}.
(Recall that we have assumed $G_{d,m,n}^{\zeta}$ is irreducible over $\QQ(\zeta)$.)
By equation~\eqref{eq:powerP} and the multiplicativity of the resultant, we have
\[\Res(h_{d,m,\ell}^{\zeta}, \Phi_{n/\ell})\text{ is not a unit in }\ZZ[\zeta] \iff
\Res(P_{d,m,\ell}^{\zeta}, \Phi_{n/\ell})\text{ is not a unit in }\ZZ[\zeta]\]
and hence the result follows from our hypothesis
that $\Res(P_{d,m,\ell}^{\zeta}, \Phi_{n/\ell})$ is not a unit.
\end{proof}

\section{$p$-special polynomials}
\label{sec:special}
In order to study the resultants $\Res(P_{d,m,\ell}^{\zeta}, \Phi_{n/\ell})$ arising
in Section~\ref{sec:multpoly}, we wish to show that the multiplier polynomials $P_{d,m,\ell}^{\zeta}$
belong to a special class, which we now define.

\begin{defin}
\label{def:special}
Let $P(x)=x^i+A_{i-1}x^{i-1}+\dots+A_1x+A_0\in\ZZ[x]$ be a monic polynomial with integer coefficients,
and let $p$ be a prime number.
We say that $P(x)$ is \emph{p-special} if it satisfies the following two properties:
\begin{itemize}
	\item $v_p(A_{i-1})> v_p(2)$, and
	\item $v_p(A_j)>v_p(A_{i-1})$ for $j=0,1,\dots,i-2$.
\end{itemize}
\end{defin}

Note that if $p=2$, the first bullet point in Definition~\ref{def:special} says that $v_p(A_{i-1})\geq 2$,
whereas for $p\geq 3$, it says that $v_p(A_{i-1})\geq 1$.
Our main result about $p$-special polynomials is the following.

\begin{thm}
\label{thm:special}
Let $P(x)\in \ZZ[x]$ be a $p$-special polynomial for some prime $p$.
Then for every integer $\ell \geq 1$, we have $|\Res(P,\Phi_{\ell})|>1$.
\end{thm}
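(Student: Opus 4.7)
The plan is to work $p$-adically. Set $v := v_p(A_{i-1})$; by the $p$-special hypothesis $p^{v+1}>2$, so since $\Res(P,\Phi_\ell)\in\ZZ$ it suffices to prove $\Res(P,\Phi_\ell)\not\equiv\pm 1\pmod{p^{v+1}}$. Because $v_p(A_j)\ge v+1$ for every $j\le i-2$, for any primitive $\ell$-th root of unity $\zeta$ one has
\[
P(\zeta)\equiv \zeta^{i-1}(\zeta+A_{i-1})\pmod{p^{v+1}\ZZ[\zeta]}.
\]
Taking the product over the $\phi(\ell)$ primitive roots and using $\prod_\zeta\zeta=\pm 1$ together with $\prod_\zeta(\zeta+A_{i-1})=(-1)^{\phi(\ell)}\Phi_\ell(-A_{i-1})$, one arrives at the crucial congruence
\[
\Res(P,\Phi_\ell)\equiv\pm\Phi_\ell(-A_{i-1})\pmod{p^{v+1}},
\]
reducing the task to showing $\Phi_\ell(-A_{i-1})\not\equiv\pm 1\pmod{p^{v+1}}$.

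For $\ell\in\{1,2\}$ and for squarefree $\ell\ge 3$ I would Taylor-expand $\Phi_\ell$ at zero: $\Phi_\ell(-A_{i-1})\equiv\Phi_\ell(0)-A_{i-1}\Phi_\ell'(0)\pmod{A_{i-1}^2}$, and note $v_p(A_{i-1}^2)=2v\ge v+1$. For squarefree $\ell\ge 3$ the standard formulas give $\Phi_\ell(0)=1$ and $\Phi_\ell'(0)=-\mu(\ell)\in\{\pm 1\}$, so $\Phi_\ell(-A_{i-1})-1\equiv\mu(\ell)A_{i-1}\pmod{p^{v+1}}$ has $v_p$ exactly $v$, while $\Phi_\ell(-A_{i-1})+1\equiv 2+\mu(\ell)A_{i-1}\pmod{p^{v+1}}$ has $v_p$ exactly $v_p(2)<v$. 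Here the strict inequality $v>v_p(2)$ built into the definition of $p$-special is precisely what rules out the congruence to $-1$. The cases $\ell\in\{1,2\}$ follow by the same argument applied directly to $P(\pm 1)$.

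For non-squarefree $\ell$ I would use the classical identity $\Phi_\ell(x)=\Phi_m(x^r)$ with $m=\mathrm{rad}(\ell)$ and $r=\ell/m\ge 2$. Standard resultant manipulations (using $\prod_{\omega^r=1}(\omega z-\alpha)=(-1)^{r+1}(z^r-\alpha^r)$) give
\[
|\Res(P(x),\Phi_\ell(x))|=|\Res(\widetilde P(y),\Phi_m(y))|,\qquad \widetilde P(y):=\prod_\alpha(y-\alpha^r),
\]
where $\alpha$ ranges over the roots of $P$ in $\overline\QQ$; the coefficients of $\widetilde P$ are integer symmetric functions of the $\alpha$'s computable from those of $P$ via Newton's identities. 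One then verifies $\widetilde P$ is itself $p$-special via a Newton polygon analysis of $P$: the unique root $\alpha^*$ with $v_p(\alpha^*)=v$ contributes the dominant term $v_p(p_r)=vr$ to the new leading-order coefficient of $\widetilde P$, while all elementary symmetric polynomials of degree $\ge 2$ in the $\alpha^r$ have strictly greater $v_p$. Since $m$ is squarefree, the previous paragraph applies to $(\widetilde P,\Phi_m)$ and finishes the proof.

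The principal obstacle I anticipate is the final verification that $\widetilde P$ is $p$-special. The Newton polygon of $P$ can degenerate---for instance, when the vertex $(i-1,v)$ is not extreme in the lower convex hull (which can happen when $v_p(A_0)<iv$), the power sum $p_r=\sum\alpha_j^r$ may vanish outright or acquire unexpectedly high $v_p$, so $\widetilde P$ need not be $p$-special in the strict sense. In those borderline cases a supplementary direct estimate on $|\Res(\widetilde P,\Phi_m)|$---such as an archimedean lower bound obtained by factoring $\widetilde P(\eta)$ at roots $\eta$ of $\Phi_m$---would be required to complete the argument in full generality.
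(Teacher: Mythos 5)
Your treatment of the squarefree case is correct and attractively direct. Reducing $\Res(P,\Phi_\ell)$ modulo $p^{v+1}$ to $\pm\Phi_\ell(-A_{i-1})$, and then exploiting $\Phi_\ell(0)=1$ and $\Phi_\ell'(0)=-\mu(\ell)$ (so that for squarefree $\ell\geq 2$ the derivative is $\pm 1$), together with the two-tier valuation hypothesis, yields a clean contradiction without ever invoking the structure of the unit group $\ZZ[\zeta]^\times$. This is genuinely different from, and rather more elementary than, the paper's Case~3, which for $\ell\geq 3$ uses the fact that $\langle\zeta\rangle\,\ZZ[\zeta+\zeta^{-1}]^\times$ has index at most $2$ in $\ZZ[\zeta]^\times$, passes to the $p$-special power $R=P^\ell$ or $R=P^{2\ell}$ via Lemma~\ref{lem:power-special}, writes $R(\zeta)=G(\zeta+\zeta^{-1})$ with $\deg G<\phi(\ell)/2$, and extracts a contradiction by comparing coefficients against the integral basis $\{1,\zeta,\dots,\zeta^{\phi(\ell)-1}\}$ first modulo $p^r$ and then modulo $p$. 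The paper's argument handles every $\ell\geq 3$ uniformly, with no squarefree/non-squarefree dichotomy.

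The non-squarefree case is a genuine gap in your proposal, which you yourself flag. There $\mu(\ell)=0$, so $\Phi_\ell(-A_{i-1})\equiv 1\pmod{p^{v+1}}$ and the congruence $\Res(P,\Phi_\ell)\equiv\pm1\pmod{p^{v+1}}$ gives no information; and the proposed remedy --- passing to $\widetilde P(y)=\prod_\alpha(y-\alpha^r)$ and showing it is $p$-special by Newton polygon considerations --- fails in general. Concretely, take $p=2$, $P(x)=x^2+4x+8$ (which is $2$-special with $v=2$), and $\ell=4$, so $m=2$, $r=2$. Then the power sum is $p_2=(-4)^2-2\cdot 8=0$, so $\widetilde P(y)=y^2+64$ has vanishing subleading coefficient and is not $2$-special. (The theorem itself still holds here: $\Res(P,\Phi_4)=P(i)P(-i)=|7+4i|^2=65>1$, but your machinery does not detect it.) The alternative archimedean estimate you gesture at is not developed. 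Since the statement is asserted for all $\ell\geq 1$, this is a substantive missing piece; a correct proof must handle $\ell$ divisible by a square, and your method as written does not.
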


To prove the theorem, we will need a lemma.

\begin{lemma}
\label{lem:power-special}
Let $P(x)\in \ZZ[x]$ be a $p$-special polynomial for some prime $p$.
For any $n\geq 1$, $(P(x))^n$ is also a $p$-special polynomial.
\end{lemma}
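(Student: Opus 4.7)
The plan is to expand $(P(x))^n$ via the multinomial theorem and analyze the $p$-adic valuation of each coefficient. Writing $P(x) = \sum_{m=0}^{i} A_m x^m$ with $A_i = 1$ and $(P(x))^n = \sum_{j=0}^{ni} B_j x^j$, each $B_j$ is a sum of multinomial terms
\[
\binom{n}{k_0, \ldots, k_i} A_0^{k_0} A_1^{k_1} \cdots A_{i-1}^{k_{i-1}}
\]
indexed by tuples $(k_0, \ldots, k_i)$ with $\sum k_m = n$ and $\sum m k_m = j$. First I would compute $B_{ni-1}$ directly: the constraint $\sum m k_m = ni - 1$ forces $k_i = n-1$, $k_{i-1} = 1$, and all other $k_m = 0$, so $B_{ni-1} = n A_{i-1}$. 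Hence $v_p(B_{ni-1}) = v_p(n) + v_p(A_{i-1}) \geq v_p(A_{i-1}) > v_p(2)$, verifying the first condition of Definition~\ref{def:special}.

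The bulk of the proof is showing $v_p(B_j) > v_p(n) + v_p(A_{i-1})$ for every $j < ni - 1$, which by the ultrametric inequality reduces to a lower bound on the valuation of each multinomial term. Setting $s := n - k_i \geq 1$, I would split into cases. When $s = 1$, the constraint $j < ni - 1$ forces the unique nonzero $k_m$ with $m < i$ to satisfy $m < i - 1$; the term is $n A_m$, so its valuation is $v_p(n) + v_p(A_m) > v_p(n) + v_p(A_{i-1})$ by hypothesis. When $s \geq 2$, I would use that $\binom{n}{s}$ divides the multinomial coefficient, together with the standard bound $v_p\!\left(\binom{n}{s}\right) \geq v_p(n) - v_p(s!)$ and the hypothesis $v_p(A_m) \geq v_p(A_{i-1})$ for all $m < i$, to conclude that each term has valuation at least $v_p(n) - v_p(s!) + s \cdot v_p(A_{i-1})$.

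The desired inequality then reduces to establishing $(s-1) v_p(A_{i-1}) > v_p(s!)$ for every $s \geq 2$, which I expect to be the main technical obstacle. For odd $p$, Legendre's formula yields $v_p(s!) \leq (s-1)/(p-1) \leq (s-1)/2$, while $(s-1) v_p(A_{i-1}) \geq s - 1$, giving a strict inequality. For $p = 2$, the bound $v_2(s!) \leq s - 1$ combines with $v_2(A_{i-1}) \geq 2$ (the full force of $v_p(A_{i-1}) > v_p(2)$) to give $(s-1) v_2(A_{i-1}) \geq 2(s-1) > s - 1 \geq v_2(s!)$. It is precisely the borderline case $s = 2$, $p = 2$ where the strict inequality $v_p(A_{i-1}) > v_p(2)$ is essential; this is the reason the definition of $p$-special demands a strict bound rather than equality at the top coefficient. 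Once this estimate is in hand, the ultrametric inequality delivers $v_p(B_j) > v_p(B_{ni-1})$ for every $j \leq ni - 2$, completing verification of both conditions of $p$-specialness for $(P(x))^n$.
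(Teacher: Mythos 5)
Your proof is correct, but it is a genuinely different argument from the one in the paper. You expand $(P(x))^n$ in one shot via the multinomial theorem, identify $B_{ni-1}=nA_{i-1}$, and bound every subleading coefficient by noting that the multinomial coefficient is divisible by $\binom{n}{s}$ with $s:=n-k_i$, combined with $v_p\bigl(\binom{n}{s}\bigr)\geq v_p(n)-v_p(s!)$ and $v_p(A_m)\geq v_p(A_{i-1})$; this reduces everything to the inequality $(s-1)\,v_p(A_{i-1})>v_p(s!)$ for $s\geq 2$, which you establish from Legendre's formula, correctly isolating $p=2$, $s=2$ as the borderline case forced by the strict requirement $v_p(A_{i-1})>v_p(2)$. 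The paper instead factors the exponent as $n=p^a m$ with $p\nmid m$: it shows separately that $P^p$ is $p$-special (using only that $p\mid\binom{p}{j}$ for $1\leq j\leq p-1$) and that $P^m$ is $p$-special when $p\nmid m$ (using only that the coefficient of $x^{im-1}$ is $mA_{i-1}$ and the rest are dominated by products of two $A_j$'s or by $A_j$ with $j\leq i-2$), then combines the two claims by induction. The paper's route keeps each step elementary and avoids any estimate on factorial or multinomial valuations, at the cost of a two-case split and an induction; your route is a single unified computation but depends on the Legendre bound $v_p(s!)\leq (s-1)/(p-1)$ (and $v_2(s!)\leq s-1$), which is a somewhat heavier ingredient. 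Both correctly hinge on the strict inequality in the first bullet of the definition, and both are valid proofs of the lemma.
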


\begin{proof}
We first claim that $(P(x))^p$ is a $p$-special polynomial.
Indeed, write
\begin{equation}
\label{eq:Pexpand}
P(x)=x^i+ \big(A_{i-1} x^{i-1} + \cdots + A_0\big).
\end{equation}
Expanding $(P(x))^p$ using the binomial theorem,
the leading term is $x^{ip}$, and the second term is $pA_{i-1}x^{pi-1}$,
which satisfies $v_p(pA_{i-1}) > v_p(A_{i-1}) > v_p(2)$,
verifying the first bullet point of Definition~\ref{def:special}.
Moreover, since $p \, | \, \binom{p}{j}$ for $1\leq j\leq p-1$,
every other coefficient must be a sum of integers, each of which is divisible
either by $pA_j$ for some $j\in\{0,1\dots,i-2\}$, or by $A_j^p$ for some $j\in\{0,1,\dots,i-1\}$.
For $0\leq j\leq i-2$, we have
\[ v_p(pA_j) > v_p(pA_{i-1}), \]
and for $0\leq j\leq i-1$, we have
\[ v_p(A_j^p) =pv_p(A_j) > 1 + v_p(A_j) = v_p(pA_{j}) \geq v_p(pA_{i-1}), \]
using the fact that $P$ is $p$-special for both inequalities.
These bounds verify the second bullet point for $P^p$, proving the claim.

Second, we claim that for any integer $m\geq 1$ with $p\nmid m$, the polynomial $P^m$
is also $p$-special. Again writing $P$ as in equation~\eqref{eq:Pexpand} and expanding $P^m$,
the lead term is $x^{im}$, and the second term is $mA_{i-1} x^{im-1}$.
We have $v_p(mA_{i-1}) = v_p(A_{i-1}) > v_p(2)$,
verifying the first bullet point of Definition~\ref{def:special}.
All other coefficients are sums of integers divisible either by $A_j A_k$
for some not necessarily distinct $j,k\in\{0,1,\dots,i-1\}$, or else divisible by $A_j$
for some $j\in\{0,1,\ldots,i-2\}$.
For $j,k\in\{0,1,\dots,i-1\}$, we have
\[ v_p(A_jA_k) > v_p(A_j) > v_p(A_{i-1}) =  v_p(mA_{i-1}), \]
and for $0\leq j\leq i-2$, we have
\[ v_p(A_j) > v_p(A_{i-1}) = v_p(mA_{i-1}), \]
using the facts that $v_p(m)=0$ and that $P$ is $p$-special.
These bounds verify the second bullet point for $P^m$, proving our second claim.

Inductively applying the two above claims, it follows that $P^n$ is $p$-special
for any positive integer $n$.
\end{proof}

\begin{proof}[Proof of Theorem~\ref{thm:special}]
It suffices to show that for any primitive $\ell$-th root of unity $\zeta$, the value
$P(\zeta)$ is not a unit in the ring $\ZZ[\zeta]$. For the sake of contradiction, we suppose that
\begin{equation}
\label{eq:20}
P(\zeta)\in \ZZ[\zeta]^{\times}.
\end{equation}

\textbf{Case 1}.
Assume $\ell=1.$ Write $P(x)=x^i+\sum\limits_{t=0}^{i-1} A_tx^t$. Since $\zeta =1$,
assumption~\eqref{eq:20} yields
\begin{equation}
\label{eq:ell1}
1+\sum_{t=0}^{i-1} A_t=\pm 1.
\end{equation}
If the right-hand side of equation~\eqref{eq:ell1} is 1, we have
\begin{equation}
\label{eq:ell1c1}
A_{i-1} = -\sum_{t=0}^{i-2}A_t,
\end{equation}
which is impossible because all the terms on the right have valuation strictly greater than the
term on the left.
Similarly, if the right-hand side of equation~\eqref{eq:ell1} is $-1$, we have
\begin{equation}
\label{eq:ell1c2}
-2 = \sum_{t=0}^{i-1}A_t,
\end{equation}
which again is impossible,
because $v_p(A_t) > v_p(2)=v_p(-2)$ for all $0\leq t\leq i-1$.
Either way, we have our desired contradiction.

\smallskip

\textbf{Case 2}.
Assume $\ell=2$, so that $\zeta=-1$. With notation as in Case~1, we have 
\[ (-1)^i+\sum_{t=0}^{i-1} (-1)^{t}A_{t}=\pm 1,
\quad\text{i.e.,}\quad \sum_{t=0}^{i-1} (-1)^{t}A_{t} \in \{-2,0,2\} . \]
If the sum above is zero, we reach a contradiction in the same way as in equation~\eqref{eq:ell1c1};
or if it is $\pm 2$, we reach a contradiction in the same way as in equation~\eqref{eq:ell1c2}.

\smallskip

\textbf{Case 3}.
Assume for the rest of the proof that $\ell\geq 3$.
Note that the subgroup $\langle \zeta\rangle \ZZ[\zeta+\zeta^{-1}]^{\times}$
of the unit group $\ZZ[\zeta]^{\times}$
has index at most two inside of $\ZZ[\zeta]^{\times}$,
by \cite[Theorem 4.12]{Washington96}.
Define $Q(x)\in\ZZ[x]$ by $Q:=P$ if this index is $1$, or $Q:=P^2$ if this index is $2$.
Then $Q$ is $p$-special by Lemma~\ref{lem:power-special}, and
$Q(\zeta)\in \langle \zeta\rangle \ZZ[\zeta+\zeta^{-1}]^{\times}$.
Assumption~\eqref{eq:20} therefore implies that
\begin{equation}
\label{eq:21}
Q(\zeta) = \zeta^iF(\zeta+\zeta^{-1})
\end{equation}
for some polynomial $F(x)\in \ZZ[x]$ and some integer $i$.
Setting $R:=Q^{\ell}$, which has degree $k\ell$ for some positive integer $k$,
we have that $R$ is again $p$-special by Lemma~\ref{lem:power-special}.
Taking the $\ell$-th power of both sides of equation~\eqref{eq:21} yields
\begin{equation}
\label{eq:22}
R(\zeta) = G(\zeta+\zeta^{-1})
\end{equation}
for some polynomial $G(x)\in \ZZ[x]$ with
$\deg(G)<\phi(\ell)/2$, where $\phi$ is the Euler totient function,
since $[\QQ(\zeta+\zeta^{-1}):\QQ]=\phi(\ell)/2$.

Write
\[ R(x)=x^{k\ell}+\sum_{t=0}^{k\ell-1}B_{t}x^{t}
\quad\text{and}\quad G(x)=\sum_{t=0}^{m}D_tx^t .\]
Substituting in~\eqref{eq:22}, we obtain
\[1+\sum_{t=0}^{k\ell-1}B_{t}\zeta^t=\sum\limits_{t=0}^{m} D_t(\zeta+\zeta^{-1})^t.\]
Multiplying by $\zeta^m$ and moving one term to the other side yields
\begin{equation}
\label{eq:23}
\sum_{t=0}^{k\ell-1} B_{t}\zeta^{m+t}=
\Bigg(\sum_{t=1}^{m} D_t(\zeta^2+1)^t\zeta^{m-t}\Bigg)+(D_0-1)\zeta^m.
\end{equation}
Let $r:=v_p(B_{k\ell-1}) > v_p(2)$.
Since $R$ is a $p$-special polynomial, the left-hand side in \eqref{eq:23} lies in $p^r\ZZ[\zeta]$.
We claim that $D_t\in p^r\ZZ$ for each $t=1,2,\dots,m$.

To prove the claim, suppose it were false, and consider the largest index $i\in\{1,2,\dots,m\}$ for which
$D_i\not\equiv 0 \pmod{p^r}$.
Note that the term with the largest power of $\zeta$ in $D_i(\zeta^2+1)^i\zeta^{m-i}$
is $D_i\zeta^{m+i}$, and for any $t<i$ with $D_t\not \equiv 0 \pmod{p^r}$,
the term $D_t(\zeta^2+1)^t\zeta^{m-t}$ only involves powers $\zeta^e$ with $0\leq e <m+i$.
Hence, recalling that $0\leq m<\phi(\ell)/2$, and thus $m+i<\phi(\ell)$,
it follows immediately that the right-hand side of equation~\eqref{eq:23},
when written as a $\ZZ$-linear combination of the integral basis $\{1,\zeta,\zeta^2,\ldots,\zeta^{\phi(\ell)-1}\}$,
has $\zeta^{m+i}$-coefficient not congruent to $0$ modulo $p^r$.
Thus, the right-hand side of equation~\eqref{eq:23} cannot lie in
$p^r\ZZ[\zeta]$, which is a contradiction, proving our claim.

By the claim, it follows that both the left-hand side and the sum in parentheses in equation~\eqref{eq:23}
must lie in $p^r\ZZ[\zeta]$. Therefore, we must also have $(D_0-1)\zeta^m\in p^r \ZZ[\zeta]$.
Define $B_t' : =p^{-r} B_t\in\ZZ$ for each $0\leq t\leq k\ell -1$,
and $D_t' : =p^{-r} D_t\in\ZZ$ for each $1\leq t\leq m$,
and $D_0' : =p^{-r} (D_0-1)\in\ZZ$.
Then equation~\eqref{eq:23} becomes
\begin{equation}
\label{eq:prgone}
\sum_{t=0}^{k\ell-1}B'_{t}\zeta^{m+t}=\sum_{t=0}^{m}D'_t(\zeta^2+1)^t\zeta^{m-t}.
\end{equation}
Note that $B'_{k\ell-1}\not\equiv 0 \pmod{p}$, and $B'_t\equiv 0\pmod{p}$ for $t=0,\dots k\ell-2$,
because $R$ is $p$-special. In addition, we have $\zeta^{m+k\ell-1}=\zeta^{m-1}$.
Thus, reducing equation~\eqref{eq:prgone} modulo $p\ZZ[\zeta]$ yields
\begin{equation}
\label{eq:redprgone}
\Bigg(\sum_{t=0}^{m}D'_t(\zeta^2+1)^t\zeta^{m-t}\Bigg)-B'_{k\ell-1}\zeta^{m-1}\equiv 0\pmod{p\ZZ[\zeta]}.
\end{equation}
Because we assumed $\ell\geq 3$, we have $\zeta^m\neq \pm \zeta^{m-1}$.
If $D'_1,D'_2,\ldots,D'_m$ were all divisible by $p$, we would obtain
$D'_0\zeta^m-B_{k\ell-1}'\zeta^{m-1}\equiv 0 \pmod{p\ZZ[\zeta]}$, which is impossible.
Thus, we may define $i$ to be the largest index $1\leq i\leq m$ for which $p\nmid D'_i$.

Note that the term with the largest power of $\zeta$ in $D_i'(\zeta^2+1)^{i}\zeta^{m-i}$ is $D_i'\zeta^{m+i}$.
As above, for any positive integer $t<i$ with $p\nmid D'_i$,
the term $D'_t(\zeta^2+1)^t\zeta^{m-t}$ only involves powers $\zeta^a$ with $a<m+i$.
Hence, when the left side of equation~\eqref{eq:redprgone} is fully expanded,
the coefficient of $\zeta^{m+i}$ is relatively prime to $p$.
This contradicts equation~\eqref{eq:redprgone} itself, completing the proof of Theorem~\ref{thm:special}.
\end{proof}

\section{The case $d=2$: Theorem~\ref{thm:d=n=2} via 2-special polynomials}
\label{sec:imply}

For the remainder of the paper, we consider Conjecture~\ref{conj:resultant} in the case $d=2$,
i.e., $f_{d,c}(z)=z^2+c$.
For ease of notation, we will hereafter write simply
$G_{m,n}$, $P_{m,n}$, and $\lambda_{m,n}$ instead of
$G_{2,m,n}^{\zeta}$, $P_{d,m,n}^{\zeta}$, and $\lambda_{d,m,n}^{\zeta}$, respectively.

With this notation, we propose the following conjecture, which we have verified using Magma
for all pairs $(m,n)$ with $m+n\leq 10$. By Theorem~\ref{thm:special},
it clearly implies Conjecture~\ref{conj:resultant} in the case $d=2$.


\begin{conj}
\label{conj:special}
Let $m\geq 2$ and $n\geq 1$. Then $P_{m,n}$ is a $2$-special polynomial.
\end{conj}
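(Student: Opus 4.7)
The plan is to analyze the $2$-adic valuations of the elementary symmetric functions of the multipliers $\lambda_j := \lambda_{m,n}(c_j)$ via the product structure of the multipliers. By equation~\eqref{eq:lambda} specialized to $d=2$, each multiplier factors as $\lambda_j = 2^n \mu_j$, where $\mu_j := \prod_{i=0}^{n-1} a_{m+i-1}(c_j)$ is an algebraic integer. Writing $P_{m,n}(x) = x^D + A_{D-1} x^{D-1} + \cdots + A_0$ with $D = \deg P_{m,n}$, the fact that the $\mu_j$'s form a Galois orbit gives each $e_k := e_k(\mu_1,\ldots,\mu_D) \in \ZZ$, and
\[
A_{D-k} = (-1)^k \, 2^{nk} \, e_k, \qquad v_2(A_{D-k}) = nk + v_2(e_k) \geq nk.
\]
Setting $s := v_2(e_1) = v_2\bigl(\sum_j \mu_j\bigr)$, we have $v_2(A_{D-1}) = n + s$, so the two bullet points of Definition~\ref{def:special} translate into (i) $n + s \geq 2$ and (ii) $v_2(e_k) > s - n(k-1)$ for each $2 \leq k \leq D$.

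For $n \geq 2$, (i) is automatic, and (ii) is automatic once $s \leq n-1$, since then $s - n(k-1) < 0$ for $k \geq 2$ while $v_2(e_k) \geq 0$. The conjecture for $n \geq 2$ therefore reduces to the single bound $v_2\bigl(\sum_j \mu_j\bigr) \leq n - 1$. My plan to prove this is: first, by induction on $k$ using $a_{k+1} = a_k^2 + c$, establish the polynomial congruence $a_k(c) \equiv c \pmod{c^2 \ZZ[c]}$; second, verify by induction on $m + n$, using the M\"{o}bius formula~\eqref{eq:Gdef}, that $G_{m,n}(c) \equiv c^D \pmod{2\ZZ[c]}$, so that every root $c_j$ lies in every prime $\frp \mid 2$ of $\calO_L$; third, expand $\mu_j$ in powers of $c_j$ modulo a suitable power of $2$ and identify $\sum_j \mu_j \bmod 2^n$ with an expression in the low-order coefficients of $G_{m,n}$ that can be shown to be nonzero modulo $2^n$.

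For $n = 1$, (i) forces $s \geq 1$, so a finer argument is needed. The extra input is the quadratic relation $\lambda_j^2 + 2\lambda_j + 4c_j = 0$ (immediate from $a_m(c_j) = -a_{m-1}(c_j)$ together with $\lambda_j = 2a_{m-1}(c_j)$), which upgrades to the polynomial identity
\[
4^D \, G_{m,1}\!\Bigl(-\tfrac{y(y+2)}{4}\Bigr) \;=\; P_{m,1}(y) \, P_{m,1}(-y - 2).
\]
Matching $2$-adic valuations of coefficients on both sides---and using the congruence $G_{m,1}(c) \equiv c^D \pmod 2$ from the previous paragraph---pins down the $v_2$ of each coefficient of $P_{m,1}$ and verifies $2$-specialness directly. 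The case $n = 2$ is even cleaner: the cycle condition $a_{m+1}(c_j) = -a_{m-1}(c_j)$ combined with $a_m = a_{m-1}^2 + c$ yields (after ruling out the $(m,1)$ branch of the resulting quadratic in $c$) the remarkable linear identity $\lambda_j = -4(1 + c_j)$, whence $P_{m,2}(x) = (-4)^D \, G_{m,2}\bigl(-x/4 - 1\bigr)$; $2$-specialness then becomes an explicit statement about the coefficients of $G_{m,2}$ modulo small powers of $2$, verifiable by induction.

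The main obstacle will be the $n \geq 3$ case in the second paragraph, specifically the bound $v_2\bigl(\sum_j \mu_j\bigr) \leq n - 1$. The trace interleaves the dynamical product structure of $\mu_j$ with the Galois action on roots of $G_{m,n}$, and while every case verified computationally satisfies the bound, a uniform proof seems to require either an inductive recursion from $P_{m,n/p}$ to $P_{m,n}$ for primes $p \mid n$ (reflecting how period-$n$ cycles contain period-$n/p$ cycles) or a significantly finer $\frp$-adic description of Misiurewicz orbit portraits than what Section~\ref{sec:techlem} currently provides.
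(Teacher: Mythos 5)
This statement is labelled a conjecture in the paper, and the paper itself does not prove it in general: only the cases $n=1$ and $n=2$ are established (Theorem~\ref{thm:special12}), via the explicit formulas of Lemma~\ref{lem:formula_P_{m,n}} together with the valuation bounds of Lemmas~\ref{lem:vjbound} and~\ref{lem:valibound}. Your opening reduction is correct and worth keeping: writing $\lambda_j=2^n\mu_j$ with $\mu_j$ an algebraic integer gives $v_2(A_{D-k})=nk+v_2(e_k)\geq nk$, so for $n\geq 2$ the whole of $2$-specialness would indeed follow from the single bound $v_2\bigl(\sum_j\mu_j\bigr)\leq n-1$. This is a cleaner structural target than anything stated in the paper. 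But you do not prove it, and the route you sketch toward it contains a concrete error: the congruence $G_{m,n}(c)\equiv c^D\pmod{2\ZZ[c]}$ is false in general. Already $G_{2,2}(c)=c^2+1\equiv(c+1)^2\pmod 2$, and its roots $\pm i$ are $2$-adic units, so the roots do \emph{not} all lie in the primes above $2$. (Compare the appendix's citation of Theorem~1.4 of \cite{BG1}: the quantities $a_i(c_0)$ have \emph{very small}, often zero, $2$-adic valuation.) So the $n\geq 3$ case remains fully open in your proposal, as you acknowledge, and the specific mechanism you propose for attacking it would fail at its second step.

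For $n=1$ and $n=2$ your changes of variable are essentially the paper's: the relation $a_{m-1}^2+a_{m-1}+c=0$ giving $c=-\lambda(\lambda+2)/4$ is the substitution of Lemma~\ref{lem:formula_P_{m,n}}(b), and $\lambda=\pm4(c+1)$ is exactly part (c) (the signs are immaterial for $2$-specialness). The functional equation $4^DG_{m,1}(-y(y+2)/4)=P_{m,1}(y)P_{m,1}(-y-2)$ is correct, but the assertion that matching valuations on both sides ``pins down'' the $v_2$ of each coefficient of $P_{m,1}$, using only $G_{m,1}\equiv c^D\pmod 2$, is where the real work is being waved away. Two issues: first, recovering individual coefficient valuations of $P_{m,1}$ from those of the product $P_{m,1}(y)P_{m,1}(-y-2)$ is not a formal operation; second, and more seriously, $2$-specialness here requires showing that every non-leading, non-subleading coefficient has valuation strictly greater than $m-1$ (for $n=1$) or $m$ (for $n=2$) --- a bound that grows with $m$ --- whereas mod-$2$ information about $G_{m,n}$ only yields valuations $\geq 1$. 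The paper's proof needs the refined inductive estimate $v_2(A_{\ell,i})>2i+\ell-2^\ell$ on the coefficients of $a_\ell$ (Lemma~\ref{lem:valibound}) and the combinatorial inequality $2^\ell>j+v_2(j)+1$ (Lemma~\ref{lem:vjbound}), followed by a several-case analysis of the binomial expansions; some substitute for those quantitative inputs is unavoidable, and your sketch does not supply one.
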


Most of the rest of this paper is devoted to the proof of the following theorem,
which confirms Conjecture~\ref{conj:special} for $n=1$ and $n=2$.

\begin{thm}
\label{thm:special12}
Let $m\geq 2$ be an integer. Then both $P_{m,1}$ and $P_{m,2}$ are $2$-special.
\end{thm}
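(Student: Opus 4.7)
The plan is to prove 2-specialness of $P_{m,1}$ and $P_{m,2}$ separately, each time by parameterizing the multiplier in dynamically natural coordinates and then analyzing the $2$-adic valuations of the coefficients using the recursion $a_k = a_{k-1}^2 + c$.

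For $P_{m,1}$, the condition $\zeta = -1$ forces $a_m(c_0) = -a_{m-1}(c_0)$, so the multiplier is $\lambda = 2 a_{m-1}$. Setting $u := -a_{m-1} = a_m$, so that $c = u - u^2$ and $u$ is a fixed point of $f_c$ with multiplier $2u$, the Misiurewicz condition becomes
\[
Q_m(u) := a_{m-1}(u - u^2) + u = 0,
\]
and $P_{m,1}(\lambda)$ agrees up to sign with $2^{2^{m-1}-1} R_m(\lambda/2)$, where $R_m(u) := Q_m(u)/u$ is monic of degree $2^{m-1}-1$. From $a_{m-1} = a_{m-2}^2 + c$ I derive the clean recursion
\[
Q_m(u) = Q_{m-1}(u)^2 - 2u\, Q_{m-1}(u) + 2u,
\]
which by induction immediately yields $Q_m \equiv u^{2^{m-1}} \pmod{2}$; hence every non-leading coefficient $r_j$ of $R_m$ is even. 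The subleading coefficient can be pinned down via the generating-function identity $a_{m-1}(\lambda - \lambda^2)/\lambda = \sum_\ell M_\ell \lambda^\ell$; a direct computation gives $r_{2^{m-1}-2} = \pm 2^{m-2}$, so $v_2(A_{2^{m-1}-2}) = m - 1$ for $m \geq 3$.

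For $P_{m,2}$, the period-2 relations force $a_m + a_{m+1} = -1$; combining this with $c = -1 - a_m - a_m^2$ yields the striking affine identity $\lambda = 4 a_{m-1} a_m = -4(c+1)$. Hence
\[
P_{m,2}(\lambda) = (-4)^N G_{m,2}(-1 - \lambda/4), \qquad N := \deg G_{m,2}.
\]
(The signs will not affect $2$-adic valuations.) Using the standard mod-$2$ formula $a_k(c) \equiv c + c^2 + c^4 + \cdots + c^{2^{k-1}} \pmod 2$, I would show $G_{m,2}(c) \equiv (c+1)^N \pmod 2$, so that expanding $G_{m,2}$ in powers of $c+1$, every non-leading Taylor coefficient $B_j$ is even. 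A direct subleading-coefficient calculation gives $b_{N-1} = 2^{m-2}$ for $m \geq 3$, from which $v_2(A_{N-1}) = m$.

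The main technical obstacle will be verifying the strict inequalities $v_2(A_j) > v_2(A_{i-1})$ for the handful of coefficients $j$ immediately below the subleading position, where the crude lower bound from mere evenness is insufficient or just barely tight. I would handle this by expanding the explicit combinatorial formulas for $M_\ell$ (resp.\ for the Taylor coefficients of $G_{m,2}$ at $c = -1$), invoking Kummer's theorem in the sharp form
\[
v_2\!\left(\binom{2^{m-2}}{t}\right) = m - 2 - v_2(t) \quad (1 \leq t \leq 2^{m-2}-1),
\]
and combining with the recursively-computed $2$-adic valuations of the top few coefficients $\alpha_k^{(m-1)}$ of $a_{m-1}$ (such as $v_2(\alpha_{2^{m-2}-1}^{(m-1)}) = m - 3$) to identify the exact $2$-adic cancellations that yield the required extra divisibility. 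The base cases $m = 2$ can be checked by direct computation. Because this technique relies on having closed-form expressions for only the top few coefficients of $a_{m-1}$ and $G_{m,2}$, I expect it to cleanly handle precisely the cases $n = 1$ and $n = 2$, consistent with the authors' comment that the general $n$ case (Conjecture~\ref{conj:special}) remains open.
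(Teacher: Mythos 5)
Your overall strategy --- deriving a closed-form expression for $P_{m,1}$ and $P_{m,2}$ via the dynamically-natural change of variable, then computing the subleading coefficient exactly and bounding the $2$-adic valuations of the remaining coefficients --- matches the paper's approach (Lemma~\ref{lem:formula_P_{m,n}} together with Lemmas~\ref{lem:vjbound} and~\ref{lem:valibound}). The parameterizations $c = u - u^2$, $\lambda = 2u$ for $n=1$, and $\lambda = \pm 4(c+1)$ for $n=2$ (your sign is flipped, since the multiplier is $4a_m a_{m+1}$, not $4a_{m-1}a_m$; immaterial for valuations), and the computed subleading coefficients are consistent with the paper's. The recursion $Q_m = Q_{m-1}^2 - 2uQ_{m-1} + 2u$ is a correct side observation not appearing in the paper.

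There is, however, a real gap that your proposed finishing move does not close. Writing $P_{m,1}(x) = 2^{2^{m-1}-1} R_m(x/2)$, the coefficient of $x^j$ is $B_j = 2^{2^{m-1}-1-j} r_j$, so $2$-specialness demands $v_2(r_j) > m - 2^{m-1} + j$ for $0\leq j\leq 2^{m-1}-3$. For $j$ near the top of the range (say $j=2^{m-1}-3$) this means $v_2(r_j)\geq m-2$, a requirement growing linearly in $m$, whereas your reduction $Q_m\equiv Q_{m-1}^2\pmod{2}$ delivers only $v_2(r_j)\geq 1$. That evenness observation, while true, covers exactly one index beyond the trivially-handled range and is a dead end for the rest; the same remark applies to your mod-$2$ analysis of $G_{m,2}$ in powers of $c+1$. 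The technical core of the paper's proof is a \emph{uniform} valuation bound on \emph{all} coefficients of $a_{m-1}$, namely $v_2(A_{\ell,i}) > 2i + \ell - 2^{\ell}$ for every $0\leq i\leq 2^{\ell-1}-2$ (Lemma~\ref{lem:valibound}, established by a separate induction on $\ell$), used together with the Kummer estimate $v_2\bigl(\binom{2^{m-2}}{j}\bigr)=m-2-v_2(j)$ and an elementary inequality (Lemma~\ref{lem:vjbound}). Your proposal correctly invokes Kummer's theorem and cites specific top-coefficient valuations such as $v_2(A_{m-1,2^{m-2}-1})=m-3$, but it does not state or prove a uniform lemma of the required type, and ``the top few coefficients'' will not suffice: as $t$ ranges over the problematic window $[2^{m-1}-m,\,2^{m-1}-3]$, the index $i$ appearing in the expansion of $B_t$ can drop as low as roughly $2^{m-2}-m/2$, a set of indices that grows with $m$. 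You have explicitly deferred this as ``the main technical obstacle,'' and I agree that it is --- but it is also where the content of the theorem actually resides, so as written the proposal outlines the framework but leaves the proof incomplete.
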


Assuming Theorem~\ref{thm:special12}, we can now prove Theorem~\ref{thm:d=n=2}.

\begin{proof}[Proof of Theorem~\ref{thm:d=n=2}]
By \cite[Corollary 1.1]{Gok19}, both $G_{m,1}$ and $G_{m,2}$ are irreducible over $\QQ$.
By Theorem~\ref{thm:special12}, both $P_{m,1}$ and $P_{m,2}$ are $2$-special,
and therefore, by Theorem~\ref{thm:special}, we have
\begin{equation}
\label{eq:Resnotunit}
\Res(P_{m,1},\Phi_i) \text{ and }
\Res(P_{m,2},\Phi_i) \text{ are not units in } \ZZ, \quad\text{for all } i\geq 1.
\end{equation}
With $i=n$ in equation~\eqref{eq:Resnotunit}, then choosing
$\ell=1$ in Proposition~\ref{prop:implies} yields that $G_{m,1}(c_0)$ is not a unit in $\calO_K$.
Similarly, if $n$ is even, choosing $i=n/2$ and $\ell=2$ yields that $G_{m,2}(c_0)$ is not a unit in $\calO_K$.
\end{proof}

It remains only to prove Theorem~\ref{thm:special12}, for which we will need the following
explicit formulas for $P_{m,1}$ and $P_{m,2}$.

\begin{lemma}
\label{lem:formula_P_{m,n}}
Let $m\geq 2$. Then:
\begin{enumerate}[label=\textup{(\alph*)}]
\item $\dsps P_{2,1}(x) = x-4$,
\item $\dsps P_{m,1}(x) = 2^{2^{m-1}} x^{-1}a_{m-1}\bigg(\frac{-x^2+2x}{4}\bigg) +2^{(2^{m-1}-1)}$
if $m\geq 3$, and
\item $\dsps P_{m,2}(x)= 4^k G_{m,2}\bigg(\frac{x-4}{4}\bigg)$,
where $k:=\deg(G_{m,2})$. Then

\end{enumerate}
\end{lemma}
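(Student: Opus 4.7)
The plan is to compute the multiplier $\lambda = \lambda_{m,n}(c)$ explicitly as a function of $c$ on the Misiurewicz variety $G_{m,n}$ in each of the three cases, and then express $P_{m,n}$ as a polynomial whose roots are these multipliers. Part (a) is immediate: $G_{2,1}(c) = (a_2+a_1)/a_1 = c+2$, whose unique root $c_0 = -2$ gives the fixed point $a_2(-2) = 2$ of $f_{2,-2}$ with multiplier $2 \cdot 2 = 4$, so $P_{2,1}(x) = x - 4$.

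For part (b), on $G_{m,1}$ the Misiurewicz relation $a_m = -a_{m-1}$ combined with $a_m = a_{m-1}^2 + c$ yields $c = -a_{m-1}^2 - a_{m-1}$, while the multiplier at the fixed point $a_m$ is $\lambda = 2a_m = -2 a_{m-1}$. Thus $a_{m-1}(c_0) = -\lambda/2$ and $c_0 = (-\lambda^2 + 2\lambda)/4$. I would then introduce the auxiliary polynomial
\[ Q(x) := 2^{2^{m-1}} \left[ a_{m-1}\!\left(\frac{-x^2+2x}{4}\right) + \frac{x}{2} \right] \in \ZZ[x], \]
show by a leading-coefficient computation that $Q$ is monic of degree $2^{m-1}$ (this uses $m \geq 3$, so that the exponent $2^{m-2}$ is even and the leading coefficient is $+1$ rather than $-1$), and observe that $Q(\lambda_j) = 0$ for every root $\lambda_j$ of $P_{m,1}$ by construction, as well as $Q(0) = 0$ since $a_{m-1}(0)=0$. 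Because $c = 0$ is not a Misiurewicz parameter, $\lambda = 0$ is not among the $\lambda_j$, so these provide $2^{m-1}$ distinct roots accounting for the full degree of $Q$; hence $Q(x) = x \cdot P_{m,1}(x)$, and distributing after dividing by $x$ yields the stated formula.

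For part (c), the essential observation is that on each root $c_j$ of $G_{m,2}$, the pair $\{a_m(c_j), a_{m+1}(c_j)\}$ is an exact $2$-cycle of $f_{2,c_j}$, hence coincides with the zero set of the second dynatomic polynomial
\[ \Phi^{*}_{2,c}(z) := \frac{f_{2,c}(f_{2,c}(z))-z}{f_{2,c}(z)-z} = z^2 + z + (c+1). \]
Vieta's formulas then give $a_m(c_j) \cdot a_{m+1}(c_j) = c_j + 1$, so $\lambda_{m,2}(c_j) = 4\, a_m(c_j)\, a_{m+1}(c_j) = 4 + 4c_j$. Since $c_j \mapsto 4 + 4c_j$ is affine and injective on the distinct roots of $G_{m,2}$, we obtain
\[ P_{m,2}(x) = \prod_{j=1}^{k} (x - 4 - 4c_j) = 4^k \prod_{j=1}^{k} \left( \frac{x-4}{4} - c_j \right) = 4^k G_{m,2}\!\left(\frac{x-4}{4}\right). \]

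The main obstacle lies in part (b): carefully tracking the leading coefficient of the composition $a_{m-1}((-x^2+2x)/4)$, where the parity of $2^{m-2}$ controls the sign (which is precisely why $m=2$ must be handled separately through the degenerate formula of (a)), and verifying that $\lambda = 0$ accounts for the sole extraneous root of $Q$ beyond the roots of $P_{m,1}$. Part (c), by contrast, is very short once the second dynatomic polynomial is invoked and one recognizes the product $a_m a_{m+1}$ as its constant term.
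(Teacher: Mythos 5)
Your proof is correct and follows essentially the same strategy as the paper's: parametrize the multiplier in terms of $c$ via $a_m=-a_{m-1}$ (part (b)) or the factorization $z^2+z+(c+1)$ of the second dynatomic polynomial (part (c)), build a monic auxiliary polynomial whose roots are the multipliers, and rescale. The only cosmetic difference is that in part (b) you work directly in the multiplier variable $\lambda$ and divide your degree-$2^{m-1}$ polynomial $Q$ by $x$ at the end, whereas the paper works in the variable $\gamma=\lambda/2$ and never introduces the extraneous root at zero; both routes hinge on the same leading-coefficient check that forces $m\geq 3$.
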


\begin{proof}
\textbf{(a) and (b)}.
Enumerate the roots of $G_{m,1}$ as $c_1,c_2,\dots,c_k$, where $k:=\deg(G_{m,1})=2^{m-1}-1$.
For each $i=1,\dots,k$, define
\[\gamma_i = -a_{m-1}(c_i) = a_m(c_i),\]
so that the multiplier of the cycle in the periodic orbit is $\lambda_i:=2\gamma_i$. Observe that
\[\gamma_i^2+c_i = f_{c_i}(-a_{m-1}(c_i))=a_m(c_i)=\gamma_i,\]
and hence $c_i = -\gamma_i^2+\gamma_i$. Thus,
\[a_{m-1}(-\gamma_i^2+\gamma_i)=a_{m-1}(c_i)=-\gamma_i.\]
Adding $\gamma_i$ to both sides and dividing by $\gamma_i$
--- while remembering that $a_{m-1}$ has no constant term --- we obtain
\[ Q_1(\gamma_i)=0, \quad\text{where}\quad
Q_1(x):=\frac{1}{x}a_{m-1}(-x^2+x)+1.\]
If $m=2$, then $a_{m-1}(x)=x$, and hence $Q_1(x)=2-x$.
On the other hand, if $m\geq 3$, then
$a_{m-1}$ is monic of (even) degree $2^{m-2}$, so that $Q_1$ is also monic, but of degree $2^{m-1}-1$.
Thus, define $\tilde{Q}_1:=-Q_1$ if $m=2$, or $\tilde{Q}_1=Q_1$ if $m\geq 3$.
Then $\tilde{Q}_1$ is monic of degree $2^{m-1}-1$,
which is the same as the degree $k$ of $G_{m,1}$.
Because each root $c_i$ of $G_{m,1}$ yields a root of $\tilde{Q}_1$ via $\gamma_i = -a_{m-1}(c_i)$,
and conversely via $c_i = -\gamma_i^2+\gamma_i$, the roots of $\tilde{Q}_1$
are precisely $\gamma_1,\ldots,\gamma_k$. Therefore, since $\tilde{Q}_1$ is monic, we have
\[ \tilde{Q}_1(x) = \prod_{i=1}^k (x-\gamma_i) .\]

The corresponding multipliers satisfy $\lambda_i = 2\gamma_i$, and hence
\[ P_{m,1}(x) = \prod_{i=1}^k (x-\lambda_i) = 2^{k}\prod_{i=1}^k \bigg(\frac{x}{2}-\gamma_i\bigg)
= 2^{(2^{m-1}-1)}\tilde{Q}_1\bigg(\frac{x}{2}\bigg).\]
If $m=2$, and hence $\tilde{Q}_1(x)=x-2$, this expression is $2(\frac{x}{2}-2) = x-4$,
proving part~(a). Otherwise, it is
\[P_{m,1}(x) = 2^{(2^{m-1}-1)} \bigg[ \frac{2}{x} a_{m-1} \bigg( \frac{-x^2}{4} + \frac{x}{2} \bigg) + 1 \bigg]
= 2^{2^{m-1}} x^{-1} a_{m-1} \bigg( \frac{-x^2+2x}{4} \bigg) + 2^{(2^{m-1}-1)},\]
proving part~(b).

\textbf{Part (c)}.
Enumerate the roots of $G_{m,2}$ as $c_1,\dots,c_k$.
(This time, $k=2^{m-1}$ if $m$ is even, or $k=2^{m-1}-1$ if $m$ is odd, although
we will not need those exact values in this part of the proof.)
For each $i=1,\dots,k$, define
\[\gamma_i:= -a_{m-1}(c_i)a_m(c_i) = a_m(c_i)a_{m+1}(c_i),\]
so that the multiplier of the cycle in the periodic orbit is $\lambda_i = 4\gamma_i$.
Since $a_m(c_i)$ and $a_{m+1}(c_i)$ are both $2$-periodic points of $f_{c_i}$,
they are the two roots of the polynomial $z^2+z+(c_i+1)$,
and therefore their product $\gamma_i$ is the constant term of this polynomial.
That is, $\gamma_i = c_i + 1$. Thus, defining $Q_2$
to be the monic polynomial $Q_2(x):= G_{m,2}(x-1)\in\ZZ[x]$, we have
\[Q_2(x) = \prod_{i=1}^k (x-\gamma_i) .\]

The corresponding multipliers satisfy $\lambda_i = 4\gamma_i$, and hence
\[ P_{m,2}(x) = \prod_{i=1}^k (x-\lambda_i) = 4^{k}\prod_{i=1}^k \bigg(\frac{x}{4}-\gamma_i\bigg)
= 4^k Q_2\bigg(\frac{x}{4}\bigg) = 4^k G_{m,2}\bigg(\frac{x-4}{4}\bigg) . \qedhere \]
\end{proof}

To prove Theorem~\ref{thm:special12},
we will also need the following elementary lemma.

\begin{lemma}
\label{lem:vjbound}
Let $\ell\geq 3$ and $0\leq j\leq 2^{\ell}-3$ be integers. Then
\[ 2^{\ell}  > j + v_2(j) + 1. \]
\end{lemma}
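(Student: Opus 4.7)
The plan is to rewrite the desired inequality in a more convenient form and then case-split on $k:=v_2(j)$. Note that $2^\ell > j + v_2(j) + 1$ is equivalent to $v_2(j) \leq 2^\ell - j - 2$, so one wants a uniform upper bound on $v_2(j)$ in terms of $2^\ell - j$. The case $j=0$ (under the convention $v_2(0)=0$) is immediate since then the left side is $1<2^\ell$, so I would quickly dispose of it and focus on $j\geq 1$.

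For $j\geq 1$, set $k=v_2(j)$. The key observation is that $2^k\mid j$ together with $j<2^\ell$ forces both $k\leq \ell-1$ and $j\leq 2^\ell - 2^k$ (the largest multiple of $2^k$ strictly below $2^\ell$). This divisibility-based bound is strong when $k$ is large, while the hypothesis $j\leq 2^\ell-3$ is strong when $k$ is small, so I would split on whether $k\leq 1$ or $k\geq 2$.

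In the small case $k\in\{0,1\}$, I use the hypothesis directly:
\[
j + v_2(j) + 1 \;\leq\; (2^\ell - 3) + 1 + 1 \;=\; 2^\ell - 1 \;<\; 2^\ell.
\]
In the large case $k\geq 2$, since $2^k\geq 4$, the divisibility bound $j\leq 2^\ell-2^k$ gives
\[
j + v_2(j) + 1 \;\leq\; 2^\ell - 2^k + k + 1,
\]
and I finish by invoking the elementary fact that $2^k > k+1$ for all $k\geq 2$ (an immediate induction). This yields the strict inequality in the remaining case.

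The lemma is really just an elementary counting statement, so I do not anticipate a substantive obstacle; the only delicate point is handling $k=0,1$ separately, since for those values the divisibility argument $j\leq 2^\ell - 2^k$ alone only gives $\leq 2^\ell$, not strict inequality. That is why the hypothesis $j\leq 2^\ell-3$ (rather than $j\leq 2^\ell-1$) is precisely what is needed: it cuts off exactly enough room to handle the $k\leq 1$ case.
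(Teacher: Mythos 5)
Your proof is correct, and it is organized differently from the paper's. The paper splits on the dyadic size of $j$: first $j=2^e$ (a power of $2$), then $2^{e-1}<j<2^e$, and within the latter further splits on whether $e\leq \ell-1$ or $e=\ell$, and then on whether $v_2(j)\leq 1$ or $v_2(j)\geq 2$ --- four subcases in all. You instead make $k=v_2(j)$ the primary parameter and observe from the start that $2^k\mid j$ together with $j<2^\ell$ forces $j\leq 2^\ell-2^k$; this single divisibility bound, combined with $2^k>k+1$ for $k\geq 2$, dispatches all large $k$ in one stroke, leaving only $k\in\{0,1\}$ to be handled by the hypothesis $j\leq 2^\ell-3$. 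Interestingly, the bound $j\leq 2^\ell-2^{v_2(j)}$ does appear in the paper, but only in its very last subcase ($e=\ell$, $v_2(j)\geq 2$); you have noticed it works uniformly, which collapses the case analysis to two cases and makes the role of the hypothesis $j\leq 2^\ell-3$ transparent. One small caveat: for $j=0$ the standard convention is $v_2(0)=\infty$, under which the stated inequality is false; the paper's proof also silently omits $j=0$. In the lemma's actual application (Step~2 of the $n=1$ proof) the relevant identity $v_2\binom{2^{m-2}}{j}=m-2-v_2(j)$ only holds for $j\geq 1$, so the intended hypothesis is effectively $j\geq 1$, and your explicit mention of the $j=0$ convention is a harmless patch rather than an error.
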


\begin{proof}
Suppose first that $j=2^e$ for some $0\leq e < \ell$. If $e\geq 2$, then $e+1<2^e$, and hence
\[ j + v_2(j) + 1 = 2^e + e + 1 < 2^{e+1} \leq 2^{\ell},\]
as desired. Otherwise, we have $e\leq 1$, and hence $2^e+e+1\leq 4 < 2^{\ell}$, since $\ell\geq 3$.
Either way, we are done.

The only other possibility is that $2^{e-1}<j<2^e$ for some $2\leq e\leq \ell$.
Note that we must have $v_2(j)\leq e-2$ in this case.
If $e\leq \ell-1$, then since $e<2^e$, we have
\[ j + v_2(j) + 1 < 2^e + (e-2) + 1 <2^{e+1} - 1 < 2^{\ell} .\]
Otherwise, we have $e=\ell$. If $v_2(j)\leq 1$, then
because $j\leq 2^{\ell}-3$, we have
\[ j + v_2(j) + 1 \leq 2^{\ell} -3 + 1 + 1 < 2^{\ell}. \]
The only remaining case is that $e=\ell$ and $v_2(j)\geq 2$.
We must have $j\leq 2^{\ell}-2^{r}$, where $r:=v_2(j)\geq 2$, and hence
\[ j + v_2(j) + 1 \leq 2^{\ell} -2^r + r + 1 < 2^{\ell}. \qedhere \]
\end{proof}

\section{The case $d=2$: Proving Theorem~\ref{thm:special12}}
\label{sec:finish}

Throughout this section, write
\begin{equation}
\label{eq:Alidef}
a_{\ell}(c) = \sum_{i=0}^{2^{\ell-1}} A_{\ell,i} c^i \qquad\text{for any }\ell \geq 1.
\end{equation}
Using the fact that $a_{\ell+1}=(a_{\ell})^2 +c$,
a simple induction on $\ell$ shows that for all $\ell\geq 2$, we have
\begin{equation}
\label{eq:Ainduct}
A_{\ell,2^{\ell-1}}=1, \quad
A_{\ell,2^{\ell-1}-1} = 2^{\ell -2}, \quad
A_{\ell,1}=1, \quad\text{and}\quad A_{\ell,0}=0.
\end{equation}
We also have the following technical but more general bound.

\begin{lemma}
\label{lem:valibound}
Let $\ell\geq 2$ and $0\leq i\leq 2^{\ell-1}-2$ be integers. Then
\begin{equation}
\label{eq:valibound}
v_2(A_{\ell,i}) > 2i + \ell - 2^\ell .
\end{equation}
\end{lemma}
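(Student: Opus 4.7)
The plan is to induct on $\ell$. The base case $\ell=2$ is immediate, since only $i=0$ is in range and $A_{2,0}=0$ has $v_2=\infty$. For the inductive step, I would use the coefficient recurrence coming from $a_{\ell+1}=a_\ell^2+c$:
\[ A_{\ell+1,k}=\sum_{\substack{0\leq i,j\leq 2^{\ell-1}\\ i+j=k}}A_{\ell,i}A_{\ell,j}\qquad(k\neq 1), \]
with an extra $+1$ when $k=1$. The cases $k=0,1$ are trivial from $A_{\ell+1,0}=0$ and $A_{\ell+1,1}=1$, since for $\ell\geq 2$ the target valuation $2k+(\ell+1)-2^{\ell+1}$ is already negative. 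For $2\leq k\leq 2^\ell-2$, I would establish the stronger statement that every summand (or pair of summands combined by swapping) has $v_2\geq 2k+\ell+2-2^{\ell+1}$; this beats the required bound by one, so it passes to the sum.

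The summands split according to whether either index equals one of the two ``boundary'' values $i_0:=2^{\ell-1}-1$ or $i_1:=2^{\ell-1}$, which lie just outside the range of the inductive hypothesis. From~\eqref{eq:Ainduct}, $A_{\ell,i_0}=2^{\ell-2}$ and $A_{\ell,i_1}=1$. \emph{Interior pairs}, with both $i,j\leq 2^{\ell-1}-2$, are handled by the inductive hypothesis alone with a comfortable surplus. \emph{Pairs containing} $i_1$ (necessarily paired with a partner in the interior range when $k\leq 2^\ell-2$) combine with their swap into $2A_{\ell,k-i_1}$, and the factor of $2$ from symmetrization is exactly what brings the bound to its target; this is the tightest boundary case. \emph{Pairs containing} $i_0$ but not $i_1$ symmetrize to $2^{\ell-1}A_{\ell,k-i_0}$ and clear the target with room to spare thanks to the extra $2^{\ell-2}$.

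The only delicate case is the diagonal pair $(i_0,i_0)$, which occurs only when $k=2^\ell-2$ and contributes the single, \emph{unsymmetrized} term $A_{\ell,i_0}^2=2^{2\ell-4}$; its valuation $2\ell-4$ beats the target $\ell-2$ precisely when $\ell\geq 2$. Assembling all contributions, $v_2(A_{\ell+1,k})\geq 2k+\ell+2-2^{\ell+1}$, giving the required strict inequality. I expect the main obstacle to be this bookkeeping of boundary pairs: the diagonal pair $(i_0,i_0)$ is what makes the lemma's inequality tight at the right endpoint of the range, and it is the only case in the induction that depends on the precise value $2^{\ell-2}$ of the boundary coefficient rather than just a $v_2$-bound from the inductive hypothesis.
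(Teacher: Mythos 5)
Your proof is correct and takes essentially the same approach as the paper's: induction on $\ell$ starting at $\ell=2$, the coefficient recurrence from $a_{\ell+1}=a_\ell^2+c$, a per-term valuation bound (after pairing symmetric indices to extract the factor of $2$), and special handling of the boundary coefficients $A_{\ell,2^{\ell-1}-1}=2^{\ell-2}$ and $A_{\ell,2^{\ell-1}}=1$ via equation~\eqref{eq:Ainduct}. The only cosmetic difference is that the paper organizes the inductive step by parity of the target index via its explicit formula~\eqref{eq:Amicases}, whereas you organize it by interior versus boundary indices; the underlying estimates and the identification of the tight cases are the same.
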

\begin{remark}
\label{rem:bound_exception}
For $i=2^{\ell-1}-1$, it follows immediately from (\ref{eq:valibound}) that
$v_2(A_{\ell,i}) = 2i+\ell-2^{\ell}$.
We will need this fact in the proof of Lemma~\ref{lem:valibound}.
\end{remark}
\begin{proof}[Proof of Lemma~\ref{lem:valibound}]
We proceed by induction on $\ell\geq 2$.
For $\ell=2$, we must have $i=0$, and hence the right side of inequality~\eqref{eq:valibound} is
$0+2-4=-2<0$. Therefore, the desired bound holds trivially.


For the rest of the proof, fix some $\ell\geq 2$,
and assume inequality~\eqref{eq:valibound} holds
for that particular $\ell$; we must show it holds for $\ell+1$ as well. We have
\begin{align*}
a_{\ell+1}(c) &= a_{\ell}(c)^2+c =\Bigg(\sum_{i=1}^{2^{\ell-1}} A_{\ell,i}c^i\Bigg)^2+c \\
&= \sum_{i=1}^{2^{\ell-1}} A_{\ell,i}^2 c^{2i}
+2\Bigg(\sum_{1\leq i<j\leq 2^{\ell-1}} A_{\ell,i}A_{\ell,j}c^{i+j}\Bigg)+c.
\end{align*}
Therefore, the coefficients $A_{\ell+1,i}$ of $a_{\ell+1}$ are given by
\begin{equation}
\label{eq:Amicases}
A_{\ell+1,i} = \begin{cases}
\dsps A_{\ell,i/2}^2+\sum_{\substack{ 0\leq r< s\leq 2^{\ell-1}\\ r+s=i}} 2A_{\ell,r}A_{\ell,s}
& \text{ if $i$ is even}, \\
\dsps \sum_{\substack{ 0\leq r< s\leq 2^{\ell-1}\\ r+s=i}} 2A_{\ell,r}A_{\ell,s}
& \text{ if $i>1$ is odd}, \\
1 & \text{ if } i=1.
\end{cases}
\end{equation}
Since $A_{\ell+1,0}=0$, to finish the induction, we must show that
\begin{equation}
\label{eq:simpbd}
v_2(A_{\ell+1,i}) \overset{\text{?}}{>} \ell+1+2i-2^{\ell+1}
\end{equation}
for all $1\leq i\leq 2^{\ell}-2$.

Given such $i$, suppose first that $i$ is even. We have
\[ v_2\big(A^2_{\ell,i/2}\big) = 2v_2\big(A_{\ell,i/2}\big)
\geq 2(\ell+i-2^{\ell}) > \ell+1 + 2i - 2^{\ell+1}, \]
where the first inequality is by Remark~\ref{rem:bound_exception} (needed for the case $i=2^{\ell}-2$) and our inductive hypothesis that \eqref{eq:valibound} holds for $\ell$,
and the second is because $\ell\geq 2$, and hence $2\ell > \ell+1$. Similarly,
\begin{align}
\label{eq:rsterm}
v_2\big( 2A_{\ell,r}A_{\ell,s} \big)
&= 1+v_2(A_{\ell,r})+v_2(A_{\ell,s})
\geq 1+ v_2(A_{\ell,r}) \notag \\
&> 1+\ell+2r-2^{\ell} \geq \ell+1+2i-2^{\ell+1}
\end{align}
where the first inequality is again by our inductive hypothesis,
and the rest is because $r+s=i$ and $s\leq 2^{\ell-1}$.
Thus, according to equation~\eqref{eq:Amicases}, the desired bound~\eqref{eq:simpbd}
holds when $i$ is even.
 
If $i>1$ is odd, inequality~\eqref{eq:simpbd} is immediate from inequality~\eqref{eq:rsterm}
and equation~\eqref{eq:Amicases}.

Finally, if $i=1$, inequality~\eqref{eq:simpbd} is $0>\ell+3-2^{\ell+1}$,
which holds because $\ell\geq 2$.
\end{proof}

At last, we are prepared to prove Theorem~\ref{thm:special12}.
We treat the cases $n=1$ and $n=2$ separately.

\begin{proof}[Proof of Theorem~\ref{thm:special12} for $n=1$]
For $m=2,3,4$, direct computation with Magma gives:
\[ P_{2,1} = x-4,\quad P_{3,1} = x^3 - 4x^2 + 16, \quad
P_{4,1} = x^7 - 8x^6 + 16x^5 + 16x^4 - 64x^3 + 256, \]
all of which are $2$-special. Thus, we may assume hereafter that $m\geq 5$.

Part (b) of Lemma~\ref{lem:formula_P_{m,n}} therefore gives us
\begin{align*}
P_{m,1}(x) &=
2^{(2^{m-1}-1)} + 2^{2^{m-1}} \sum_{i=1}^{2^{m-2}} A_{m-1,i} x^{-1} \bigg( \frac{2x -x^2}{4}\bigg)^i
\\
&=2^{(2^{m-1}-1)} + \sum_{i=1}^{2^{m-2}} 2^{(2^{m-1}-2i)} A_{m-1,i} x^{i-1} (2-x)^i
\\
&= 2^{(2^{m-1}-1)} + \sum_{i=1}^{2^{m-2}}\sum_{j=0}^{i}2^{(2^{m-1}-i-j)}A_{m-1,i }(-1)^j \binom{i}{j} x^{i+j-1}
\\
&= x^{(2^{m-1}-1)} + \sum_{t=0}^{2^{m-1}-2} B_t x^t,
\end{align*}
where $B_0= 2^{2^{m-1}}$ (using the fact that $A_{m-1,1}=1$), and
\begin{equation}
\label{eq:b_t_formula}
B_t = \mathlarger{\sum_{\substack{ 0\leq j\leq i\leq 2^{m-2}\\ i+j=t+1}}}
2^{2^{m-1}-i-j}A_{m-1,i}(-1)^{j} \binom{i}{j}
\quad\text{for } 1\leq t\leq 2^{m-1}-2 .
\end{equation}
Formula~\eqref{eq:b_t_formula} yields that $B_{2^{m-1}-2}=-2^{m-1}$.
Recalling that $m\geq 5$, it follows that
\[ v(B_{2^{m-1}-2})=m-1\geq 2, \quad\text{and}\quad
v_2(B_0)=2^{m-1}>v_2(B_{2^{m-1}-2}), \]
verifying the first bullet point of Definition~\ref{def:special}, and a small part of the second.
Therefore, to finish verifying that $P_{m,1}$ is $2$-special,
it remains to show that $v_2(B_t)>m-1$ for each integer $1\leq t\leq 2^{m-1}-3$.
We achieve this goal in the following four steps.

\smallskip

\textbf{Step 1 }. We claim that 
\[v_2(B_{2^{m-1}-3})>m-1.\]
Observe that there are only
two pairs of indices $j\leq i\leq 2^{m-2}$ such that $i+j=2^{m-1}-2$, namely
$i=j=2^{m-2}-1$, and $i=2^{m-2}$, $j=2^{m-2}-2$. Therefore, formula~\eqref{eq:b_t_formula} gives
\begin{align*}
B_{2^{m-1}-3} &= -4A_{m-1,2^{m-2}-1}+4A_{m-1,2^{m-2}} \binom{2^{m-2}}{2} \\
& = -4(2^{m-3}) + 4(1) \big(2^{m-3}(2^{m-2}-1)\big) = 2^{m-1}(2^{m-2}-2),
\end{align*}
where we used equation~\eqref{eq:Ainduct} with $\ell=m-1$ in the second equality.
Thus, $v_2(B_{2^{m-1}-3})=m>m-1$, as desired.

\smallskip

\textbf{Step 2}.
We claim that for $i=2^{m-2}$ and $0\leq j\leq 2^{m-2}-3$, we have
\begin{equation}
\label{eq:v2term}
v_2\bigg( 2^{2^{m-1}-i-j}A_{m-1,i}(-1)^{j} \binom{i}{j} \bigg) > m-1.
\end{equation}
By equation~\eqref{eq:Ainduct}, we have $A_{m-1,i}=1$.
Therefore, by the identity $v_2(\binom{2^{m-2}}{j})=m-2-v_2(j)$, 
inequality~\eqref{eq:v2term} becomes
\[ \big(2^{m-1} - 2^{m-2} - j\big) + \big( m-2-v_2(j) \big) > m-1, \]
which holds by applying Lemma~\ref{lem:vjbound} with $\ell=m-2\geq 3$.

\smallskip

\textbf{Step 3}.
We claim that inequality~\eqref{eq:v2term} also holds
for $i=2^{m-2}-1$ and $0\leq j<2^{m-2}-1$.
That is, we are claiming that
\[2^{m-2}+v_2\bigg(\binom{2^{m-2}-1}{j}\bigg) \overset{\textup{?}}{>} j+1 ,\]
which we obtained
by substituting $i=2^{m-2}-1$ and $v_2(A_{m-1},i)=m-3$ from equation~\eqref{eq:Ainduct}
into inequality~\eqref{eq:v2term}, and simplifying.
However, this desired inequality is immediate from our assumption that $j<2^{m-2}-1$.

\smallskip

\textbf{Step 4}.
We claim that inequality~\eqref{eq:v2term} also holds
for $0\leq j \leq i\leq 2^{m-2}-2$.
In that case, the left side of inequality~\eqref{eq:v2term} is
\[ 2^{m-1}-i-j+v_2(A_{m-1,i}) + v_2\bigg(\binom{i}{j}\bigg)
\geq 2^{m-1} -2i + v_2(A_{m-1,i}) > m-1\]
as desired, where the second inequality is by Lemma~\ref{lem:valibound} with $\ell=m-1\geq 4$.

\smallskip

The claims of Steps 1--4, together with equation~\eqref{eq:b_t_formula}, show
that $v_2(B_t) > m-1$ for each $1\leq t \leq 2^{m-1} -3$, which as we noted completes
our proof that $P_{m,1}$ is 2-special.
\end{proof}

\begin{proof}[Proof of Theorem~\ref{thm:special12} for $n=2$]
The main M\"{o}bius product in the definition of $G^{\zeta}_{d,m,n}$ in equation~\eqref{eq:Gdef},
in our case of $G_{m,2}$, is
\[ H_m(c) := \frac{a_{m+1}+a_{m-1}}{a_m+a_{m-1}}
= \frac{a_{m+1}-a_m}{a_m+a_{m-1}} + \frac{a_m+a_{m-1}}{a_m+a_{m-1}} = a_m-a_{m-1}+1, \]
since $a_{m+1}-a_m = (a^2_m +c) - (a^2_{m-1} +c) = a^2_m-a^2_{m-1}$.
Note that $\text{deg}(H_m) = 2^{m-1}$. Inspired by part~(c) of Lemma~\ref{lem:formula_P_{m,n}},
we define
\begin{equation}
\label{eq:Rmdef}
R_m(x):= 4^{2^{m-1}} H_m\bigg(\frac{x-4}{4}\bigg) =
4^{2^{m-1}}\bigg(a_m\bigg(\frac{x-4}{4}\bigg)-a_{m-1}\bigg(\frac{x-4}{4}\bigg)+1\bigg).
\end{equation}

If $m$ is even, then equation~\eqref{eq:Gdef} gives $G_{m,2}=H_m$, and therefore
part~(c) of Lemma~\ref{lem:formula_P_{m,n}} gives $P_{m,2}=R_m$.
On the other hand, if $m$ is odd, then 
\[G_{m,2} = H_m \prod_{d|2}a_d^{-\mu(2/d)} = H_m \cdot \frac{c}{c^2 + c} = (c+1)^{-1} H_m, \]
which is a monic polynomial by \cite[Theorem~2.1]{BG1}, and which has degree degree $2^{m-1}-1$.
Thus, still assuming $m$ is odd, part~(c) of Lemma~\ref{lem:formula_P_{m,n}} gives
\[ P_{m,2}(x) = 4^{(2^{m-1}-1)} G_{m,2}\bigg(\frac{x-4}{4}\bigg)
= \bigg( \frac{1}{4} \bigg(\frac{x-4}{4} + 1\bigg)^{-1} \bigg) R_m(x) = \frac{1}{x}R_m(x), \]
which must be a polynomial (in spite of the $1/x$) because $G_{m,2}$ is a polynomial.
That is, for $m$ odd, the polynomial $R_m$ of equation~\eqref{eq:Rmdef} must have constant term zero.

Thus, it suffices to show, for all $m\geq 2$, that $R_m$ is $2$-special.
Indeed, for $m$ odd, if $R_m$ is 2-special, then since it also has constant term zero,
it is immediate from Definition~\ref{def:special} that $P_{m,2}(x)=R_m(x)/x$ is 2-special as well;
and for $m$ even, we have $P_{m,2}=R_m$.

For $m=2,3,4$, direct computation of the above formulas with Magma gives:
\[ R_2 = x^2-8x+32, \quad R_3 = x^4 - 8x^3 + 128x, \quad\text{and} \]
\[ R_4 = x^8 - 16x^7 + 96x^6 - 128x^5 - 1536x^4 + 8192x^3 - 8192x^2 - 32768x +131072, \]
all of which are $2$-special.
Therefore, for the remainder of the proof, we assume $m\geq 5$, and we must prove that
$R_m$ is 2-special.

Writing
\begin{equation}
\label{eq:Dmidef}
a_m-a_{m-1}+1 = \sum_{i=0}^{2^{m-1}} D_{m,i}c^i
\end{equation}
and using this expansion in equation~\eqref{eq:Rmdef}, we obtain
\begin{align*}
R_m(x) &= 4^{2^{m-1}}\sum_{i=0}^{2^{m-1}} D_{m,i}\bigg(\frac{x-4}{4}\bigg)^i 
= 4^{2^{m-1}} \sum_{i=0}^{2^{m-1}} D_{m,i}\sum_{j=0}^{i} (-1)^{i-j}\binom{i}{j}\bigg(\frac{x}{4}\bigg)^j\\
&= \sum_{i=0}^{2^{m-1}} D_{m,i}\sum_{j=0}^{i} (-1)^{i-j} 4^{2^{m-1}-j}\binom{i}{j}x^j
= x^{2^{m-1}} + \sum_{t=0}^{2^{m-1}-1}E_t x^t,
\end{align*}
where
\begin{equation}
\label{eq:Et}
E_t := 4^{2^{m-1}-t}\sum_{i=t}^{2^{m-1}} (-1)^{i-t}D_{m,i}\binom{i}{t}
\quad\text{for } t=0,1,\dots,2^{m-1}-1.
\end{equation}

Observe that for $i>\deg(a_{m-1})$, the coefficients $D_{m,i}$ in equation~\eqref{eq:Dmidef}
coincide with the coefficients $A_{m,i}$ from equation~\eqref{eq:Alidef}. That is, we have
\begin{equation}
\label{eq:AmBm}
D_{m,i} = A_{m,i} \quad \text{for } i=2^{m-2}+1,\dots,2^{m-1} .
\end{equation}
Recalling from equation~\eqref{eq:Ainduct} that $A_{m,2^{m-1}}=1$ and $A_{m,2^{m-1}-1}=2^{m-2}$,
equations~\eqref{eq:Et} and \eqref{eq:AmBm} yield
\begin{align*}
E_{2^{m-1}-1} &= 4\sum_{i=2^{m-1}-1}^{2^{m-1}}(-1)^{i-2^{m-1}+1}D_{m,i}\binom{i}{2^{m-1}-1}\\
&= 4\sum_{i=2^{m-1}-1}^{2^{m-1}}(-1)^{i-2^{m-1}+1}A_{m,i}\binom{i}{2^{m-1}-1}\\
&= 4A_{m,2^{m-1}-1}-4A_{m,2^{m-1}}2^{m-1} = -2^m.  
\end{align*}
Thus, this coefficient has valuation $v_2(-2^m)=m > v_2(2)$, verifying the first bullet point
of Definition~\ref{def:special}.
Therefore, to finish the proof, we need to verify that
\begin{equation}
\label{eq:vEt}
v_2(E_t)>m \quad \text{for } t=0,1,\dots,2^{m-1}-2 .
\end{equation}
For $0\leq t\leq 2^{m-2}$, equation~\eqref{eq:Et} gives us
\[v_2(E_t)\geq v_2(4^{2^{m-1}-t})\geq 2^{m-1}>m,\]
which establishes the desired inequality, since $m\geq 5$.

For the remainder of the proof of \eqref{eq:vEt}, then, we consider
$t>2^{m-2}$. By equations~\eqref{eq:Et} and~\eqref{eq:AmBm}, it suffices to show
\begin{equation}
\label{eq:tigoal}
2^m-2t+v_2(A_{m,i})+v_2\bigg(\binom{i}{t}\bigg) \overset{\textup{?}}{>} m
\quad \text{ for }2^{m-2} < t \leq 2^{m-1}-2 \,\text{ and }\, i\geq t .
\end{equation}
To prove this, we consider three cases.

\smallskip

\textbf{Case 1}.
If $i=2^{m-1}$, then $A_{m,i}=1$ and $v_2(\binom{i}{t})=m-1-v_2(t)$,
and hence goal~\eqref{eq:tigoal} becomes
\[2^m-2t+m-1-v_2(t) \overset{\textup{?}}{>} m
\quad \text{ for }2^{m-2} < t \leq 2^{m-1}-2. \]
For $t=2^{m-1}-2$, we have  $v_2(t)=1$, and hence
\[2^m-2t+m-1-v_2(t) =2^m - 2^m + 4 + m-2 = m+2 > m, \]
as desired.
Otherwise, we have $t\leq 2^{m-1}-3$, in which case
\[2^m-2t+m-1-v_2(t) \geq 2^{m-1} - t - v_2(t) +m+2 > m+3 > m,\]
where we used the fact that $t\leq 2^{m-1}-3$ and, in the second inequality,
Lemma~\ref{lem:vjbound} with $\ell=m-1 \geq 4$ and $j=t$.

\smallskip

\textbf{Case 2}.
If $i=2^{m-1}-1$, then $A_{m,i}=2^{m-2}$, and hence
\[ 2^m-2t+v_2(A_{m,i})+v_2\bigg(\binom{i}{t}\bigg)
\geq 2^m-2t+m-2 \geq m+2 > m, \]
as desired,
where we used the fact that $t\leq 2^{m-1}-2$ in the second inequality.

\smallskip

\textbf{Case 3}.
It remains to consider $t\leq i\leq 2^{m-1}-2$. In this case, we have
\[ 2^m-2t+v_2(A_{m,i})+v_2\bigg(\binom{i}{t}\bigg) \geq 2^m-2t+v_2(A_{m,i})
\geq 2^m-2i+v_2(A_{m,i}) > m, \]
where the last inequality is by Lemma~\ref{lem:valibound} with $\ell=m$.
\end{proof}

\section{Appendix}
\label{sec:appendix}
As noted in the introduction, the resultants
$\Res(P_{m,n},\Phi_\ell)$ of Conjecture~\ref{conj:resultant_mult_cycl}
are in fact very large integers, according to computational evidence.
Table~\ref{tab:logres} presents the Weil heights of these resultants,
rounded down to the nearest integer,
for $m+n\leq 7$ and $\ell\leq 8$.
(Recall that the Weil height of an integer $N$ is $\log\max\{|N|,1\}$.)

\begin{table}[h]
\caption{Heights of the resultants $\Res(P_{m,n},\Phi_\ell)$}
\label{tab:logres}
\begin{tabular}{|c|c||c|c|c|c|c|c|c|c|}
\hline
$(m,n)$ & $\deg P_{m,n}$ &
\multicolumn{8}{|c|}{$\lfloor \log| \Res(P_{m,n},\Phi_\ell)|\rfloor$ \;\; for\;\; $\ell=$} \\
\cline{3-10}
& & 1 & 2 & 3 & 4 & 5 & 6 & 7 & 8 \\ \hline
(2,1) & 1 & 1 & 1 & 3 & 2 & 5 & 2 & 8 & 5
\\ \hline
(2,2) & 2 & 3 & 3 & 7 & 6 & 11 & 6 & 21 & 13
\\ \hline
(2,3) & 6 & 14 & 14 & 29 & 29 & 58 & 28 & 87 & 58
\\ \hline
(2,4) & 12 & 37 & 37 & 75 & 74 & 150 & 74 & 225 & 150
\\ \hline
(2,5) & 30 & 114 & 115 & 229 & 229 & 458 & 229 & 686 & 457
\\ \hline
(3,1) & 3 & 2 & 2 & 5 & 3 & 11 & 5 & 16 & 11
\\ \hline
(3,2) & 3 & 4 & 4 & 9 & 9 & 19 & 9 & 29 & 19
\\ \hline
(3,3) & 12 & 26 & 27 & 54 & 54 & 109 & 53 & 163 & 108
\\ \hline
(3,4) & 24 & 70 & 71 & 142 & 141 & 283 & 141 & 424 & 283
\\ \hline
(4,1) & 7 & 5 & 5 & 10 & 11 & 22 & 11 & 33 & 22
\\ \hline
(4,2) & 8 & 11 & 11 & 24 & 23 & 47 & 23 & 71 & 47
\\ \hline
(4,3) & 21 & 45 & 45 & 91 & 91 & 183 & 91 & 275 & 183
\\ \hline
(5,1) & 15 & 11 & 10 & 22 & 21 & 45 & 22 & 67 & 45
\\ \hline
(5,2) & 15 & 21 & 21 & 42 & 43 & 86 & 43 & 129 & 86
\\ \hline
(6,1) & 31 & 22 & 22 & 44 & 44 & 86 & 44 & 134 & 88
\\ \hline
\end{tabular}
\end{table}

The data above lead us to propose the following conjecture.
Recall that $a\asymp b$ means that the quantities $a$ and $b$
have the same growth rate, i.e., there are constants $c,C>0$ such that
$ca\leq b \leq Ca$.

\begin{conj}
\label{conj:resgrowth}
Let $m\geq 2$ and $n,\ell\geq 1$. Then
\[ \log\big| \Res\big(P_{m,n}, \Phi_{\ell}\big)\big| \asymp n\deg(P_{m,n}) \deg(\Phi_{\ell}) . \]
\end{conj}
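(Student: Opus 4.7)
The plan is to start from the classical identity
\[ \log\bigl|\Res(P_{m,n},\Phi_\ell)\bigr| = \sum_{\lambda}\log|\Phi_\ell(\lambda)| = \sum_\zeta \log|P_{m,n}(\zeta)|, \]
where $\lambda$ runs over the roots of $P_{m,n}$ and $\zeta$ over primitive $\ell$-th roots of unity, both with multiplicity. The target growth rate $n\deg(P_{m,n})\varphi(\ell)$ splits naturally: the factor $\varphi(\ell)=\deg(\Phi_\ell)$ comes from the asymptotic $|\Phi_\ell(\lambda)|\approx |\lambda|^{\varphi(\ell)}$ for $|\lambda|$ bounded away from the unit circle, while the factor $n\deg(P_{m,n})$ should come from a Lyapunov-type identity $\sum_\lambda\log|\lambda|\asymp n\deg(P_{m,n})$. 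This decouples the two halves of the $\asymp$-statement into an upper bound, essentially a boundedness assertion, and a lower bound, a non-clustering assertion for multipliers.

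For the upper bound, one applies the trivial estimate $|\Phi_\ell(\lambda)|\leq(1+|\lambda|)^{\varphi(\ell)}$. Because every root $c_0$ of $G_{d,m,n}^\zeta$ lies in the bounded Mandelbrot locus of $f_{d,c}$, we have $|a_i(c_0)|\leq B$ for some absolute constant $B$; equation~\eqref{eq:lambda} then gives $\log|\lambda|\leq Cn$ uniformly in $(m,n,\ell,\zeta)$, and summing over the roots produces $\log|\Res|\leq C' n\deg(P_{m,n})\varphi(\ell)$.

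For the lower bound, one combines the pointwise inequality $\log|\Phi_\ell(\lambda)|\geq \varphi(\ell)(\log|\lambda|-\log 2)$, valid when $|\lambda|\geq 2$, with a Lyapunov estimate $\sum_\lambda \log|\lambda|\geq cn\deg(P_{m,n})$. The latter should follow from uniform expansion on the Julia set for postcritically finite maps (Ma\~n\'e, Przytycki) together with arithmetic equidistribution of Misiurewicz parameters on the bifurcation locus in the spirit of Baker--DeMarco \cite{BD13} and Favre--Gauthier \cite{FG15}: the average of $\tfrac{1}{n}\log|\lambda|$ over a Galois orbit of Misiurewicz parameters of type $(m,n)$ should converge to $\log d$.

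The main obstacle is the set of multipliers $\lambda$ lying close to $S^1$, and in particular close to a primitive $\ell$-th root of unity. If even one $\lambda$ approaches some $\zeta$ too rapidly as a function of $(m,n,\ell)$, the corresponding factor $|\Phi_\ell(\lambda)|$ collapses and the lower bound fails. A non-clustering estimate uniform in all three parameters is therefore required; producing one amounts to a quantitative refinement of Conjecture~\ref{conj:resultant_mult_cycl} itself, since even the bound $|\Res(P_{m,n},\Phi_\ell)|\geq e^{cn\deg(P_{m,n})\varphi(\ell)}$ already forces $|\Res|>1$. For this reason we do not expect Conjecture~\ref{conj:resgrowth} to be significantly easier than Conjecture~\ref{conj:resultant_mult_cycl}: it is rather the natural asymptotic refinement one would hope to extract from a successful equidistribution argument for Misiurewicz multipliers, and it is the step on which the whole argument will stand or fall.
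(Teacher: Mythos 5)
This is a conjecture, and the paper offers no proof --- only the numerical evidence of Table~\ref{tab:logres} and a brief heuristic at the end of Section~\ref{sec:appendix}, to the effect that each multiplier $\lambda$ equals $2^n\alpha$ for an algebraic integer $\alpha$ of negligible size, so that $|\lambda-\zeta|$ should generically be about $2^n$; multiplying over the $\deg(P_{m,n})\cdot\deg(\Phi_\ell)$ pairs of roots then gives the conjectured growth rate. You have correctly treated the statement as a heuristic rather than pretending to prove it, and your sketch is a more structured version of the paper's own reasoning. Your upper bound --- $|\Phi_\ell(\lambda)|\le(1+|\lambda|)^{\deg\Phi_\ell}$ together with boundedness of $a_i(c_0)$ when $c_0$ lies in the connectedness locus --- is in fact rigorous, and it accounts for the observed ceiling of about $1.44$ on the ratio in the table. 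The lower bound, where you invoke Lyapunov exponents and equidistribution of Misiurewicz parameters, is where the real difficulty lies, and you locate it precisely: equidistribution controls the \emph{average} of $\log|\lambda|$ over a Galois orbit, but does not by itself preclude a single multiplier from approaching a primitive $\ell$-th root of unity so closely that one factor $|\Phi_\ell(\lambda)|$ collapses. Your closing observation --- that a uniform non-clustering estimate is exactly the content of Conjecture~\ref{conj:resultant_mult_cycl}, and that Conjecture~\ref{conj:resgrowth} is in fact the \emph{stronger} of the two, since the lower half of the $\asymp$ already forces $|\Res(P_{m,n},\Phi_\ell)|>1$ --- is the right way to situate the two conjectures, and it matches the spirit in which the paper presents this statement as a natural quantitative refinement rather than an easier surrogate.
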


Indeed, throughout Table~\ref{tab:logres}, we have
\[ 0.71  \leq
\frac{ \log\big| \Res\big(P_{m,n}, \Phi_{\ell}\big) \big| }{ n\deg(P_{m,n}) \deg(\Phi_{\ell}) }
\leq 1.44 \]
and outside of the rows with $m+n\leq 4$, the upper bound of $1.44$ drops to $0.82$.

This growth rate should be expected if there are no particular coincidences
aligning the multiplier polynomial $P_{m,n}$ and the cyclotomic polynomial $\Phi_\ell$.
After all, each root $\lambda$ of $P_{m,n}$ is of the form $2^n\alpha$ where
$\alpha=a_n(c_0)\cdots a_{n+m-1}(c_0)$ is an algebraic integer
whose $p$-adic valuation is 0 for all odd primes $p$ and very small for $p=2$.
(See Theorem~1.4 of \cite{BG1}.)
Hence, the expected size of $\lambda-\zeta$ should be about $2^n$, where $\zeta$ is a root of unity.
The resultant is the product of all such differences across all roots
$\lambda$ of $P_{m,n}$ and all roots $\zeta$ of $\Phi_\ell$,
suggesting that $\log| \Res(P_{m,n}, \Phi_{\ell} ) |$ should be
on the order of $n\deg(P_{m,n}) \deg(\Phi_{\ell})$.
Thus, there is both empirical and theoretical evidence to support
Conjecture~\ref{conj:resultant_mult_cycl}, that $|\Res (P_{m,n}, \Phi_{\ell} ) |> 1$.


\bigskip

\textbf{Acknowledgments.}
The first author gratefully acknowledges the support of NSF grant DMS-2101925.\bigskip

\textbf{Data availability statement.} The datasets generated during and/or analysed during the current study are available from the corresponding author on reasonable request.



\begin{thebibliography}{99}

	\bibitem{BD13}
	M.~Baker and L.~DeMarco,
	\emph{Special curves and postcritically-finite polynomials},
	Forum of Mathematics, Pi \textbf{1}, 2013.
	
	\bibitem{BenBook}
	R.L.~Benedetto, 
	\emph{Dynamics in One Non-Archimedean Variable},
	American Mathematical Society, Providence, 2019.
	
	\bibitem{BG1}
	R.L.~Benedetto and V.~Goksel,
	\emph{Misiurewicz polynomials and dynamical units, Part I},
	Preprint, 2021. Available at \texttt{arXiv:2201.07868}
	
	
	\bibitem{BEK19}
	X.~Buff, A.~Epstein, and S.~Koch,
	\emph{Prefixed curves in moduli space}, 
	Amer. J. Math., to appear.
	
	\bibitem{BFKP21}
	X.~Buff, W.~Floyd, S.~Koch, and W.~Parry,
	\emph{Factoring Gleason Polynomials Modulo $2$},
	Journal de Th\'eorie des Nombres de Bordeaux, to appear.
	
	\bibitem{Buff18}
	X.~Buff,
	\emph{On postcritically finite unicritical polynomials},
	New York J. Math. 24, 1111--1122, 2018.
	
	\bibitem{CG}
	L.~Carleson and T.W.~Gamelin,
	\emph{Complex Dynamics},
	Springer-Verlag, New York, 1993.
	
	\bibitem{DH}
	A.~Douady and J.H.~Hubbard,
	\emph{\'{E}tude dynamique des polyn\^{o}mes complexes I \& II},
	Publ.\ Math.\ d'Orsay \textbf{85}, 1984,1985.
	
	\bibitem{Eber99}
	D.~Eberlein,
	\emph{Rational parameter rays of multibrot sets},
	PhD thesis, 
	Technische Universit\"at M\"unchen, 1999.
	
	\bibitem{Epstein12}
	A.~Epstein,
	\emph{Integrality and rigidity for postcritically finite polynomials. With
		an appendix by Epstein and Bjorn Poonen},
	Bull. Lond. Math. Soc. {\bf 44} (2012), no. 1, 39--46.
	
	\bibitem{FG15}
	C.~Favre and T.~Gauthier,
	\emph{Distribution of postcritically finite polynomials},
	Israel J. Math. \textbf{209} (2015), 235--292.
	
	\bibitem{Fakh14}
	N.~Fakhruddin,
	\emph{The algebraic dynamics of generic endomorphisms of $\mathbb{P}^n$},
	Algebra Number Theory \textbf{8} (2014), no. 3, 587-608.
	
	\bibitem{GKNY17}
	D.~Ghioca, H.~Krieger, K.D.~Nguyen, and H.~Ye,
	\emph{The dynamical Andr\'{e}-Oort Conjecture: Unicritical polynomials},
	Duke Math. J. 166(1) (2017), 1--25.
	
	\bibitem{Gok19}
	V.~Goksel,
	\emph{On the orbit of a post-critically finite polynomial of the form $x^d+c$},
	Funct. Approx. Comment. Math. {\bf 62 (1)} (2020), 95--104.
	
	\bibitem{Gok20}
	V.~Goksel,
	\emph{A note on Misiurewicz polynomials},
	Journal de Th\'eorie des Nombres de Bordeaux, Volume 32 (2020), No. 2, p. 373--385.
	
	\bibitem{HT15}
	B.~Hutz, A.~Towsley,
	\emph{Misiurewicz points for polynomial maps and transversality}
	New York J. Math. 21, 297--319, 2015.
	
	\bibitem{Kam88}
	M.~Kaminski,
	\emph{Cyclotomic polynomials and units in cyclotomic number fields},
	J. Number Theory 28.3 (1988): 283--287.
	
	\bibitem{Mil}
	J.~Milnor,
	\emph{Dynamics in One Complex Variable}, 3rd ed.,
	Princeton University Press, Princeton, 2006.
	
	\bibitem{Milnor93}
	J.~Milnor,
	\emph{Geometry and dynamics of quadratic rational maps},
	Experiment. Math. Volume 2, Issue 1 (1993), 37--83.
	
	\bibitem{Milnor09}
	J.~Milnor,
	\emph{Cubic polynomials with periodic critical orbit, Part I},
	In “Complex Dynamics
	Families and Friends”, ed. D. Schleicher, A. K. Peters (2009), 333--411.
	
	\bibitem{Milnor12} 
	J.~Milnor,
	\emph{Arithmetic of unicritical polynomial maps},
	Frontiers in Complex Dynamics:
	In Celebration of John Milnor's 80th Birthday (2012), 15--23.
	
	\bibitem{Poirier93}
	A.~Poirier,
	\emph{On postcritically finite polynomials, part two},
	Preprint Stony Brook IMS (1993).
	
	\bibitem{Washington96}
	L.~Washington,
	\emph{Introduction to Cyclotomic Fields},
	Volume 83 of Graduate Texts Mathematics, 2nd edition, Springer, 1996.
	
	
\end{thebibliography}
\end{document}